\newcommand{\n}{\nonumber}
\renewcommand{\Re}{\mathop{\rm Re}}
\newcommand{\eps}{\ensuremath{\varepsilon}}
\renewcommand{\tilde}{\widetilde}
\renewcommand{\bar}{\overline}
\newcommand{\sbra}[1]{\!\left[ #1 \right]} 
\newcommand{\bC}{\ensuremath{\mathbb{C}}}
\newcommand{\bD}{\ensuremath{\mathbb{D}}}
\newcommand{\bE}{\ensuremath{\mathbb{E}}}
\newcommand{\bN}{\ensuremath{\mathbb{N}}}
\newcommand{\bP}{\ensuremath{\mathbb{P}}}
\newcommand{\bQ}{\ensuremath{\mathbb{Q}}}
\newcommand{\bR}{\ensuremath{\mathbb{R}}}
\newcommand{\cB}{\ensuremath{\mathcal{B}}}
\newcommand{\cD}{\ensuremath{\mathcal{D}}}
\newcommand{\cF}{\ensuremath{\mathcal{F}}}
\newcommand{\cG}{\ensuremath{\mathcal{G}}}
\newcommand{\cN}{\ensuremath{\mathcal{N}}}
\newcommand{\cQ}{\ensuremath{\mathcal{Q}}}
\newcommand{\cR}{\ensuremath{\mathcal{R}}}
\newcommand{\cX}{\ensuremath{\mathcal{X}}}
\newcommand{\cY}{\ensuremath{\mathcal{Y}}}
\theoremstyle{plain}
\newtheorem{Thm}{Theorem}[section]
\newtheorem{Lem}[Thm]{Lemma}
\newtheorem{Prop}[Thm]{Proposition}
\theoremstyle{definition}
\newtheorem{Def}[Thm]{Definition}
\newtheorem{Rem}[Thm]{Remark}
\numberwithin{equation}{section}
\renewcommand\section{\@startsection {section}{1}{\z@}%
                                   {-3.5ex \@plus -1ex \@minus -.2ex}%
                                   {2.3ex \@plus.2ex}%
                                   {\normalfont\large\bf}}
\renewcommand\subsection{\@startsection {subsection}{1}{\z@}%
                                   {-3.5ex \@plus -1ex \@minus -.2ex}%
                                   {2.3ex \@plus.2ex}%
                                   {\normalfont\normalsize\bf}}
\begin{document}
\begin{center}
	{\Large \bf 
		{Entrance boundary for standard processes with no negative jumps and its application to exponential convergence to the Yaglom limit}
	}
\end{center}
\begin{center}
	Kosuke Yamato (The University of Osaka)
\end{center}
\begin{center}
	{\small \today}
\end{center}

\begin{abstract}
	We study standard processes with no negative jumps under the entrance boundary condition.
	Similarly to one-dimensional diffusions, we show that the process can be made into a Feller process by attaching the boundary point to the state space.
	We investigate the spectrum of the infinitesimal generator in detail via the scale function, characterizing it as the zeros of an entire function. As an application, we prove that under the strong Feller property, the convergence to the Yaglom limit of the process killed on hitting the boundary is exponentially fast.
\end{abstract}


\section{Introduction}


For one-dimensional diffusions, \cite{FellerTheParabolicDifferentialEquations} introduced  a classification of the boundaries.
It is classically known that the classification gives clear description for various properties of diffusions (see e.g., \citet[Chapter 33]{Kallenberg-third} and \citet[Chapter V.7]{RogersWilliams2}).
In particular, when a boundary is classified as entrance, we can attach the boundary point to the state space and extend the process to have the Feller property (see e.g., \citet[Section 15]{FellerTheParabolicDifferentialEquations}, \citet[Problem 3.6.3]{Ito-McKean} and \citet[Theorem 33.13]{Kallenberg-third}).
Throughout this paper, the term ``Feller property'' refers to the property that the transition semigroup maps the set of continuous functions vanishing at infinity to itself.
In the present paper, we study standard processes with no negative jumps on $I = [0,\ell)$ or $(0,\ell) \ (\ell \in (0,\infty])$ satisfying a similar boundary condition at $\ell$.
Standard processes with no negative include various important processes such as one-dimensional diffusions, spectrally positive L\'evy processes, continuous-state branching processes, and their variants.

A successful generalization of the entrance boundary for processes with no negative jumps has been given in \cite{entranceBdry}.
They introduced the \textit{instantaneous entrance boundary condition} for $\ell$ by a limit condition of hitting times (see \eqref{instantaneousEntrance} for definition), which is equivalent to Feller's entrance condition for one-dimensional diffusions.
Under the condition and a certain regularity assumption on the transition semigroup, they showed that the process can be extended to a Feller process on $I \cup \{\ell\}$.
We generalize this result to the processes with killing and relax the assumptions on the transition semigroup to the corresponding ones on the resolvent.

We also establish a connection between the boundary condition and the theory of scale functions.
Feller's original definition of the entrance boundary for one-dimensional diffusions was given by the integrability of the scale function with respect to the speed measure.
In \cite{NobaGeneralizedScaleFunc}, a scale function for standard processes with no negative jumps was introduced under mild technical assumptions.
He showed that the exit times and potential densities on intervals are represented by it as in the case of one-dimensional diffusions, which we will recall in Section \ref{section:scaleFunc}.
In \cite{noba2023analytic}, the entrance and non-entrance boundary classification of $\ell$ for such processes was introduced in relation to the study of quasi-stationary distributions.
The classification is based on whether the scale function is integrable with respect to the reference measure, which is a direct extension of Feller's original definition.
As we will check in Proposition \ref{prop:twoEntranceConditions}, this entrance condition is equivalent to the instantaneous entrance boundary condition in \cite{entranceBdry} under a mild assumption.

Once we extend the process to have the Feller property, we have its infinitesimal generator by the Hille-Yosida theorem.
One of the main purposes of the present paper is to study its spectrum when the process is killed on hitting boundaries.
The scale function is crucial for this purpose.
Under the entrance condition, the resolvent density killed on hitting zero is represented by a meromorphic function on the complex plane, which is a rational function of the scale function.
This implies that the spectrum of the infinitesimal generator coincides with the zeros of the entire function given as the denominator of the resolvent density.
This is reminiscent of the classical Fredholm integral equation theory (see e.g., \cite{MR65391,MR2772421,MR1892228}) and shows that the Fredholm determinant is given by the scale function.
We also show that the compactness of the resolvent and determine the spectrum of the infinitesimal generator on $L^{p}$-spaces $(p \in [1,\infty))$ when a quasi-stationary distribution exists.

The other main purpose is to show the exponential convergence of the \textit{Yaglom limit} for the standard process $(X_{t},\bP_{x})$ on $[0,\ell]$ with no negative jumps killed on hitting zero, under the assumption of the strong Feller property.
We say that a distribution $\nu$ on $(0,\ell]$ is the Yaglom limit of $X$ when the following holds in a certain sense of convergence:
\begin{align}
	\bP_{x}[X_{t} \in dy \mid \tau_{0} > t] \xrightarrow[t \to \infty]{} \nu(dy) \quad \text{for every $x \in (0,\ell]$},
\end{align}
where $\tau_{0} := \inf \{ t \geq 0 \mid X_{t} = 0 \}$.
The existence of the Yaglom limit in the total variation distance was shown in \citet[Theorem 5.24]{noba2023analytic} under a certain continuity of the transition semigroup (see Theorem \ref{thm:YaglomLimit}) as an application of the $R$-theory for general Markov processes developed in \cite{TuominenTweedie}.
Imposing a stronger assumption, the strong Feller property (SF) presented in Section \ref{section:exponentialConvergence}, we show that the convergence is exponentially fast.
The proof is essentially due to the results in \cite{SchillingWang} and \cite{KolbSavov}.
In \cite{SchillingWang}, they showed compactness of the transition semigroup for a general class of Markov processes on $\bR^{d}$ under the strong Feller property.
\cite{KolbSavov} applied their argument to L\'evy processes killed at the exit time of a finite interval and showed the exponential convergence to the Yaglom limit.
By following these studies and combining them with the extension of the state space, we show the compactness of the transition semigroup and derive the exponential convergence.
We also observe the exponential ergodicity of the conditional limit process to avoid zero.
In appendix, we treat the exponential convergence to the Yaglom limit for processes killed on hitting either of the boundaries.

We briefly review previous studies on the exponential rate of convergence to the Yaglom limit.
In \cite{ChampagnatVillemonaisPTRF}, the authors provided a necessary and sufficient condition for exponential convergence in the total variation distance, based on a kind of Dobrushin condition and Harnack inequality.
They also established in \cite{ChampagnatVillemonais} a Lyapunov-type criterion for exponential convergence in weighted total variation distances.
While their conditions are quite general and applicable to a wide range of examples, they are abstract and can be difficult to verify in general.
In particular, the application to Markov processes with jumps in continuous state spaces seems limited.
It is worth noting, however, that verifying our assumption, the strong Feller property, can also be challenging.
Our approach differs from theirs in that we utilize the spectral theory.
The advantage lies in explicitly providing a lower bound of the convergence rate as a spectral gap, whereas their method provides it in an abstract manner, making it quantitatively intractable.
In \cite{KolbSavov}, they studied L\'evy processes killed on exiting a finite interval.
Their method relies mainly on abstract machinery, specifically semigroup theory and spectral theory for the space of continuous functions.
A crucial role is played by a result on the compactness of the transition semigroup given in \cite{SchillingWang}.
While they suggest that their method is applicable beyond L\'evy processes (see \citet[Section 3]{KolbSavov}), the compactness of the state space is an essential limitation.
Thus, extending their arguments to non-compact spaces is not straightforward.
Our result on the Yaglom limit shows that their method can still work for non-compact state spaces under the entrance condition.

Finally, we note that the results of the present paper are also valid with trivial modifications for discrete state space analogs, namely, continuous-time Markov chains on $\bN$ whose size of downward jumps is at most one.
Such processes are called \textit{downward skip-free Markov chains}.
The scale function and quasi-stationary distributions for such processes were studied in \cite{QSD_downward_skip-free}, where the counterparts of the results used in this paper for discrete state spaces are provided.

\subsection*{Outline of the paper}

In Section \ref{section:preliminary}, we establish the necessary definitions and fix the notation that will be used throughout the paper.
We also recall the definition and basic properties of the scale function, as well as review previous results on quasi-stationary distributions and the Yaglom limit.
In Section \ref{section:entranceBoundaryAndFellerProperty}, we study the extension of the state space under the entrance boundary condition.
In Section \ref{section:spectralTheory}, we study the spectrum of the infinitesimal generator and characterize it as the zeros of an entire function.
In Section \ref{section:exponentialConvergence}, we show the exponential convergence to the Yaglom limit under the strong Feller property.
In Appendix \ref{appendix:accessible}, we study the exponential convergence to the Yaglom limit when the right boundary $\ell$ is accessible.

\subsection*{Acknowledgement}

The author was supported by JSPS KAKENHI grant no.\ 24K22834 and JSPS Open Partnership Joint Research Projects grant no.\ JPJSBP120209921.
This research was supported by RIMS and by ISM.

	

\section{Preliminary} \label{section:preliminary}

In the following, we fix a one-dimensional standard process $X=(\Omega, \cF, \cF_t, X_t, \theta_t, \bP_x)$ whose state space is an interval $I \subset [-\infty,\infty]$ with the cemetery $\partial$.
For the definition of standard processes, see e.g., \cite[(9.2) in Chapter 1]{BluGet1968}.
We write $I_{\Delta} := I \cup \{\partial \}$, $\ell_{1} := \inf I$ and $\ell_{2} := \sup I$.
Denote the lifetime by $\zeta$.
We follow the convention that $f(\partial) = 0$ for every measurable function $f$ on $I_{\Delta}$.
We denote the transition operator by $p_{t}$, that is, $p_{t}f(x) := \bE_{x}[f(X_{t})]$ for every bounded measurable function $f$.
For $x \in I$, we write
\begin{align}
I_{\geq x}:= I \cap [x, \infty], \quad 
I_{>x}:= I \cap (x, \infty], \quad 
I_{\leq x }:=I \cap [-\infty, x], \quad
I_{<x }:=I \cap [-\infty, x). \n
\end{align}
As an essential assumption for the processes, we suppose the absence of negative jumps throughout the paper:
\begin{align}
	\bP_{z}[ \tau_{x}^{-} < \tau_{y}] = 0 \quad (\text{$x, y, z \in I$ with $x < y < z$}), \label{absenceOfNegativeJumps1}
\end{align}
where $\tau_{A} := \inf \{ t > 0 \mid X_{t} \in A \}$ denotes the first hitting time of the set $A$ and we especially write 
\[
\tau_{x} := \tau_{\{x\}}, \quad \tau_{x}^{-} := \tau_{I_{\leq x}} \quad\text{and} \quad \tau_{x}^{+} := \tau_{I_{\geq x}} \ (x \in I).
\]
We note the following condition equivalent to \eqref{absenceOfNegativeJumps1}, which may more clearly represent the absence of negative jumps:

\begin{Prop} \label{prop:absenceOfNegativeJumpsEquivalence}
	Condition \eqref{absenceOfNegativeJumps1} holds if and only if
	\begin{align}
		\bP_{x}[ X_{t} - X_{t-} < 0 \quad \text{for some $t > 0$}] = 0 \quad \text{for every $x \in I$}. \label{absenceOfNegativeJumps2}
	\end{align} 	
\end{Prop}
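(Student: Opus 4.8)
The plan is to prove the equivalence by a direct argument using the quasi-left-continuity and strong Markov property of the standard process, together with the fact that a standard process has only countably many jump times, each of which is a stopping time that can be approached by an increasing sequence of stopping times (so the process is quasi-left-continuous at totally inaccessible times and the jumps at predictable times form a countable family as well).

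For the implication \eqref{absenceOfNegativeJumps2} $\Rightarrow$ \eqref{absenceOfNegativeJumps1}, I would argue by contradiction: suppose $\bP_{z}[\tau_{x}^{-} < \tau_{y}] > 0$ for some $x < y < z$. On the event $\{\tau_{x}^{-} < \tau_{y}\}$, the process starts in $I_{\geq y}$ (since $z > y$) and reaches $I_{\leq x}$ before hitting $y$; since $y$ separates $I_{\leq x}$ from $I_{\geq y}$ within the interval $I$, the path cannot pass continuously from $I_{\geq y}$ to $I_{\leq x}$ without visiting $y$. Hence on this event the path must jump over the gap $(x,y)$, i.e. there exists $t \le \tau_x^-$ with $X_{t-} \in I_{\geq y}$ and $X_t \in I_{\leq x}$, which gives $X_t - X_{t-} \le x - y < 0$. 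This contradicts \eqref{absenceOfNegativeJumps2} applied with starting point $z$.

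For the converse \eqref{absenceOfNegativeJumps1} $\Rightarrow$ \eqref{absenceOfNegativeJumps2}, fix $x \in I$ and suppose, for contradiction, that with positive $\bP_x$-probability there is a negative jump. A negative jump at time $t$ means $X_{t-} > X_t$; by the right-continuity of paths and the density of the rationals, we can find rationals $q_1 > q_2$ in $I$ with $X_{t-} > q_1 > q_2 > X_t$, so that the path, watched just before $t$, lies in $I_{\geq q_1}$ and just after $t$ lies in $I_{\leq q_2}$. Taking a countable union over such rational pairs, it suffices to show that for every pair $q_2 < q_1$ in the interior of $I$, the path started from $x$ $\bP_x$-a.s. never jumps from $I_{\geq q_1}$ directly into $I_{\leq q_2}$. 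If $x \le q_2$ this is handled by stopping at $\tau_{q_1}^+$ and using the strong Markov property to reduce to a starting point in $I_{\geq q_1}$; so assume the process is in $I_{\geq q_1}$ at some stopping time $S$ and we must rule out a jump from $I_{> q_2}$ into $I_{\le q_2}$ before returning to $I_{\le q_1}$. Shifting by $S$ and using the strong Markov property, this is exactly the statement that from a point $z \ge q_1$ the process does not reach $I_{\leq q_2}$ strictly before $\tau_{q'}$ for a suitably chosen $q' \in (q_2, q_1)$ — but here we must be careful, since \eqref{absenceOfNegativeJumps1} is phrased with the intermediate point $y$ being hit, not merely approached. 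The clean way is: by \eqref{absenceOfNegativeJumps1} with $z \ge q_1$, $y = q'$, $x = q_2$ we get $\bP_z[\tau_{q_2}^- < \tau_{q'}] = 0$ for every $q' \in (q_2, q_1]$; letting $q' \uparrow q_1$ along a sequence and using that $\tau_{q'} \downarrow \tau_{q_1}^+$-type limits are controlled (the process is in $I_{\ge q_1}$ at time $S$, hence $\tau_{q_1}^+ \circ \theta_S = 0$), we conclude that a jump from $I_{\ge q_1}$ into $I_{\le q_2}$ is impossible; summing the null sets over the countably many rational pairs yields \eqref{absenceOfNegativeJumps2}.

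The main obstacle is the second implication, specifically handling the distinction between ``the intermediate point $y$ is hit'' (as in \eqref{absenceOfNegativeJumps1}) versus ``the barrier at level $y$ is crossed'': because the process has no negative jumps, crossing level $y$ downward forces hitting $y$ itself, but making this rigorous requires a left-limit / quasi-left-continuity argument — one approaches $y$ from above by a sequence $q' \downarrow$ and checks that the hitting times $\tau_{q'}$ increase to a time at which the process is at $y$ (not jumping past it), which is precisely where the absence of negative jumps feeds back in. I expect the bookkeeping of stopping times and the strong Markov shifts to be routine once this crossing-implies-hitting point is isolated and stated as the crux; a short separate lemma (or an inline claim) that ``$X$ has no negative jumps in the sense of \eqref{absenceOfNegativeJumps1} implies $\tau_y^- = \tau_{I_{\le y}} = \inf\{t : X_{t-} \le y \text{ or } X_t \le y\}$ is a hitting time of $y$ from the right'' would make the argument transparent.
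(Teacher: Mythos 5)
Your first implication matches the paper's: on $\{\tau_x^- < \tau_y\}$ the path must exit $I_{\geq y}$ without hitting $y$, and at that exit time the left limit is $\geq y$ while the value is $<y$, producing a negative jump (note that, contrary to your phrasing, the jump need not land all the way in $I_{\leq x}$ --- but any negative jump suffices). Your second implication, however, takes a genuinely different route. The paper argues by contradiction with a renewal-type construction: it fixes a jump-size threshold $\delta$, localizes the pre-jump position to a short interval $J=(a,a+\delta/4)$ on which the event $A=\{X_{\tau^{(\delta)}-}\in J,\ \tau^{(\delta)}<\infty\}$ has positive probability, uses \eqref{absenceOfNegativeJumps1} to show the process started at $a$ must, before the big jump, descend below $a-\delta/4$ and return to $J$ and to $a$ with conditional probability one, and then iterates these return times to force $\tau^{(\delta)}=\infty$ a.s. Your approach instead enumerates the candidate jump times directly by the downcrossing stopping times of an interval $[q',q_1]$ with $q_2<q'<q_1$ and applies \eqref{absenceOfNegativeJumps1} at each upcrossing time; this is more elementary and avoids the renewal construction, at the price of having to verify that every time $t$ with $X_{t-}>q_1$ and $X_t<q_2$ is one of these (countably many, non-accumulating) downcrossing times --- which holds for c\`adl\`ag paths but which your sketch leaves implicit when it speaks only of a single stopping time $S$ and ``returning to $I_{\leq q_1}$''.

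On the crux you flag: no limit $q'\uparrow q_1$ is needed, and the mechanism you propose there does not obviously deliver the conclusion. The clean argument is with a single fixed $q'\in(q_2,q_1)$: starting from $z>q'$, \eqref{absenceOfNegativeJumps1} gives $\tau_{q'}\leq\tau_{q_2}^-$ a.s., and $X_{\tau_{q'}}=q'$ on $\{\tau_{q'}<\infty\}$ by right continuity, while $X_{\tau_{q_2}^-}\leq q_2$ on $\{\tau_{q_2}^-<\infty\}$. Hence if the entry times into $I_{\leq q'}$ and $I_{\leq q_2}$ coincided, all three times $\tau_{q'}^-\leq\tau_{q'}\leq\tau_{q_2}^-$ would be equal and the process would simultaneously sit at $q'$ and in $I_{\leq q_2}$, a contradiction; so the entry into $I_{\leq q'}$ strictly precedes that into $I_{\leq q_2}$ and lands in $(q_2,q']$, ruling out a jump below $q_2$ at that time. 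With this substitution, and the downcrossing enumeration made explicit, your plan closes into a complete proof.
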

We prove this proposition in Appendix \ref{appendix:absenceOfNegativeJumpsEquivalence}.

\subsection{The scale function}

The scale function is the primary tool used throughout Section \ref{section:spectralTheory} and beyond.
We prepare necessary notions for the scale function and recall the definition and basic properties.

\subsubsection{Local times, potential densities, and excursion measures} \label{section:localTimeExcursionMeasure}

To define the scale function, we need an excursion measure away from a point.
Here we set assumptions to ensure the existence and recall basic facts.
See \cite[Section 2.1]{noba2023analytic} for details.

For $q > 0$, we denote the $q$-resolvent of $X$ by $R^{(q)}$:
for every non-negative measurable function $f$
\begin{align}
	R^{(q)}f(x) := \bE_{x}\left[ \int_{0}^{\infty}\mathrm{e}^{-qt}f(X_{t})dt \right] =  \int_{0}^{\infty}\mathrm{e}^{-qt}p_{t}f(x)dt.\label{probabilisticResolvent}
\end{align}
We introduce the following technical conditions:
\begin{description}
	\item[(A1)] The map $(x,y) \longmapsto \bE_{x}[\mathrm{e}^{-\tau_{y}}]$ is $\cB(I) \otimes \cB(I)$-measurable, where $\cB(I)$ is the Borel sets of $I$. 
	\item[(A2)] For every $x,y \in I$ with $x < y$, we have $\bP_{y}[\tau_{x} < \infty] > 0$.
	\item[(A3)] There is \textit{a reference measure} $m$ of $X$,
	that is, there exists a Radon measure $m$ on the set of universally measurable sets $\cB(I)^{\ast}$ of $I$, and we have for any $A \in \cB(I)^{\ast}$ that
	\begin{align}
		R^{(1)}1_{A}(x) = 0 \ \text{for every $x \in I$ if and only if } m(A) = 0. \label{}
	\end{align}
	For details, see, e.g., \citet[p.196]{BluGet1968}.
\end{description}
In the rest of Section \ref{section:preliminary}, we always assume (A1)-(A3).
Condition (A2) ensures that the reference measure has a full support.

\begin{Prop}[{\citet[Proposition 2.1]{noba2023analytic}}]\label{Prop101}
	The measure $m$ satisfies $m(x, y)>0$ for $x, y \in I$ with $x< y$. 
\end{Prop}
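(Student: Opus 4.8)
The plan is to deduce the strict positivity of $m$ on open subintervals directly from the defining property of the reference measure in (A3), using only the right-continuity of the paths of the standard process $X$. Since $I$ is an interval and $x, y \in I$ with $x < y$, the open interval $(x,y)$ is a nonempty Borel subset of $I$; fix, for instance, $w := (x+y)/2 \in (x,y)$.

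The key step is to verify that $R^{(1)}1_{(x,y)}(w) > 0$. Under $\bP_w$ the path starts at $X_0 = w$ and is right-continuous, so there is a strictly positive (random) time $\sigma > 0$ such that $X_t \in (x,y)$ for all $t \in [0,\sigma)$: right-continuity at $t=0$ produces some $\sigma > 0$ with $\absol{X_t - w} < \min\{w - x,\, y - w\}$ on $[0,\sigma)$, and since $(x,y)$ is open this forces $X_t \in (x,y)$ there. Consequently
\begin{align}
	\int_{0}^{\infty} \mathrm{e}^{-t} 1_{(x,y)}(X_t)\, dt \;\geq\; \int_{0}^{\sigma} \mathrm{e}^{-t}\, dt \;=\; 1 - \mathrm{e}^{-\sigma} \;>\; 0 \qquad \bP_w\text{-a.s.}
\end{align}
The random variable on the left-hand side is nonnegative and measurable (by progressive measurability of $X$), hence its $\bP_w$-expectation, which is exactly $R^{(1)}1_{(x,y)}(w)$, is strictly positive.

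It then follows that $R^{(1)}1_{(x,y)}$ does not vanish identically on $I$, so the equivalence defining the reference measure in (A3) forces $m((x,y)) \neq 0$, i.e.\ $m(x,y) > 0$. There is essentially no obstacle in this argument; the only point needing a moment's care is that the exit time of the \emph{open} set $(x,y)$ from an interior starting point is almost surely strictly positive, which is precisely path right-continuity at the origin. Note that neither the absence of negative jumps \eqref{absenceOfNegativeJumps1} nor the measurability/irreducibility conditions (A1)--(A2) enter here; only the right-continuity of $X$ and the characterization of $m$ are used.
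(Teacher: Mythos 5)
Your argument is correct. The paper itself gives no proof of this proposition---it is quoted verbatim from \citet[Proposition 2.1]{noba2023analytic}---but your derivation is the standard one: normality ($X_0=w$ $\bP_w$-a.s.) plus right-continuity of paths at $t=0$ give a.s.\ positive occupation time of the open set $(x,y)$ from an interior starting point, hence $R^{(1)}1_{(x,y)}(w)>0$, and the defining equivalence in (A3) then forces $m(x,y)>0$. Your observation that neither (A1), (A2), nor the absence of negative jumps is needed is also accurate; the paper's remark that ``(A2) ensures the reference measure has full support'' is not contradicted by this, since your proof simply shows the conclusion already follows from (A3) and path regularity alone.
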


Under conditions (A1)-(A3), by virtue of the general result \cite[Theorem 18.4]{GemanHorowitz}, there exists a family of processes $(L^{x})_{x \in I}$ with $L^{x} = (L^{x}_{t})_{t \geq 0}$ which we call the \textit{local times} of $X$ satisfying the occupation time formula:
for every $t\geq 0$, non-negative measurable function $f$ and $x \in I$,
\begin{align}
	\int_{0}^{t}f(X_{s})ds &= \int_{I}f(y)L^{y}_{t}m(dy) \quad \bP_x\text{-a.s}. \label{occupationTimeFormula} 
\end{align}
Note that this formula implies
\begin{align}
	\bE_x \sbra{\int_{0}^{T} e^{-qt} f(X_t) dt} = \int_{{I}}f(y)\bE_{x}\left[ \int_{[0, T)}\mathrm{e}^{-qt} dL^{y}_{t} \right] m(dy) \label{potentialDensityFormula}
\end{align}
for $q > 0$ and non-negative random variable $T$.
Thus, for each $x \in I$ the function $y \mapsto \bE_{x}\left[ \int_{[0,T)}\mathrm{e}^{-qt} dL^{y}_{t} \right]$ is a $q$-potential density of $X$ starting from $x$ and killed at $T \wedge \zeta$.
Through the local time, the existence of an excursion measure away from a point follows (see e.g., \citet[Section IV]{BertoinLevy} and \cite{Itoexcursion}).
Specifically, for every $x \in I$, there exists an excursion measure $n_{x}$ on $\bD_{x}$, the space of c\`adl\`ag paths from $[0,\infty)$ to $I_{\Delta}$ which starts from and stops at $x$, such that
\begin{align}
	-\log \bE_{x}[\mathrm{e}^{-q\eta^{x}_{1}}] =\delta_{x} q + n_{x}[1 - \mathrm{e}^{-q\tau_{x}}]\quad (q\geq 0)  \label{107}
\end{align}
for some $\delta_{x}\geq 0$, where $\eta^{x}$ is the inverse local time of $x$, the right-continuous inverse of $t \mapsto L^{x}_{t}$. 

\subsubsection{Definition of the scale function and basic formulas} \label{section:scaleFunc}

The scale function for standard processes with no negative jumps was introduced in \cite{NobaGeneralizedScaleFunc}.
Set $\tau_{\ell_{1}}^{-} (= \tau_{\ell_{1}}) := \lim_{x \to \ell_{1}}\tau_{x}^{-}$ when $\ell_{1} \not\in I$ and define
\begin{align}
	\tilde{I} := \left\{
		\begin{aligned}
			&I \cup \{\ell_{1}\} &(\ell_{1} \not\in I \text{ and }\bP_{x}[\tau_{\ell_{1}} < \infty] > 0 \text{ for every (or some) } x \in I) \\
			&I &(\text{otherwise})
		\end{aligned}
	\right.
	.
\end{align}
The following is the definition of the scale function, which plays a fundamental role throughout the paper.

\begin{Def}[{\citet[Definition 3.1]{NobaGeneralizedScaleFunc}}] \label{def:scaleFunc}
	For $q \geq 0$, define the \textit{$q$-scale function} $W^{(q)}: \tilde{I} \times I \to [0,\infty)$ by
	\begin{align}
		W^{(q)}(x,y) := \frac{1\{x \leq y\}}{n_{y}[\mathrm{e}^{-q\tau_{x}^{-}}, \tau_{x}^{-} < \infty]}.			
		\label{scaleFunc}
	\end{align}
	where we consider $1/ \infty = 0$.
	We especially write $W := W^{(0)}$.
\end{Def}

We recall some basic properties of the scale function.
See \cite{NobaGeneralizedScaleFunc} and \cite{noba2023analytic} for the proof.
Actually, the results presented below are slightly generalized to include the case in which $\ell_{1} \not\in I$ and $\bP_{x}[\tau_{\ell_{1}} < \infty] > 0$.
We can, however, easily extend to the case by an obvious modification of the proof or limit procedure for the original results.
Thus, we omit the proof.

\begin{Prop}[{\citet[Proposition 2.4, 2.6, 3.2]{noba2023analytic}}] \label{prop:propertiesOfScaleFunc}
	For $q \geq 0$, the following hold:
	\begin{enumerate}
		\item For $x,y \in \tilde{I}$ with $x < y$, we have $W^{(q)}(x,y) \in (0,\infty)$.
		\item For $y \in I_{>\ell_{1}}$, the function $\tilde{I}_{<y} \ni x \mapsto W^{(q)}(x,y)$ is continuous.
		\item For $x,y,z \in \tilde{I}$ with $x < y < z$, we have $\bE_{y}[\tau_{x}^{-} \wedge \tau_{z}^{+} \wedge \zeta] < \infty$ and
		\begin{align}
			\int_{(x, z]}W^{(q)}(x,u) m(du) < \infty. \label{eq24}
		\end{align}
		In addition, we have
		\begin{align}
			\lim_{x \to z} \int_{(x,z]}W^{(q)}(x,u) m(du) = 0. \label{eq17}
		\end{align}
	\end{enumerate}
\end{Prop}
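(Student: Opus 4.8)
The plan is to treat the bulk assertions and the boundary point separately. For $x,y,z\in I$ (i.e.\ when $\tilde{I}=I$), statements (i), (ii), (iii) are \citet[Proposition 2.4, 2.6, 3.2]{noba2023analytic}; and since $x<y<z$ together with $\ell_{1}=\inf\tilde{I}$ force $y,z\in I$, the only case not covered there is $\ell_{1}\notin I$, $\bP_{x}[\tau_{\ell_{1}}<\infty]>0$ (so $\tilde{I}=I\cup\{\ell_{1}\}$), with $x=\ell_{1}$. So the whole task is to extend to the first argument $x=\ell_{1}$, which I would do by a monotone limit. (For orientation on the bulk case: (i) and (ii) are the strict positivity, finiteness and continuity in $x$ of the denominator $n_{y}[\mathrm{e}^{-q\tau_{x}^{-}},\tau_{x}^{-}<\infty]$ in \eqref{scaleFunc}, coming from (A2), \eqref{107}, the continuity of $x\mapsto\tau_{x}^{-}$ under \eqref{absenceOfNegativeJumps1}, and dominated convergence; while (iii) follows by first proving $\bE_{y}[\tau_{x}^{-}\wedge\tau_{z}^{+}\wedge\zeta]<\infty$ through a geometric tail estimate for the exit time of the relatively compact interval $(x,z)$, then reading \eqref{eq24} off the occupation-time identity \eqref{potentialDensityFormula} with $T=\tau_{x}^{-}\wedge\tau_{z}^{+}$, and getting \eqref{eq17} by dominated convergence as $x\uparrow z$.)

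Fix $x_{n}\in I$ with $x_{n}\downarrow\ell_{1}$. Since $I_{\leq x_{n}}\downarrow\emptyset$ we have $\tau_{x_{n}}^{-}\uparrow\tau_{\ell_{1}}^{-}$, hence $\mathrm{e}^{-q\tau_{x_{n}}^{-}}1\{\tau_{x_{n}}^{-}<\infty\}$ decreases (with the convention $\mathrm{e}^{-q\cdot\infty}:=0$) to $\mathrm{e}^{-q\tau_{\ell_{1}}^{-}}1\{\tau_{\ell_{1}}^{-}<\infty\}$, $n_{y}$-a.e., and monotone convergence under $n_{y}$ gives
\begin{align}
	n_{y}\bigl[\mathrm{e}^{-q\tau_{x_{n}}^{-}},\tau_{x_{n}}^{-}<\infty\bigr]\ \downarrow\ n_{y}\bigl[\mathrm{e}^{-q\tau_{\ell_{1}}^{-}},\tau_{\ell_{1}}^{-}<\infty\bigr],\qquad\text{i.e.}\qquad W^{(q)}(x_{n},y)\ \uparrow\ W^{(q)}(\ell_{1},y). \n
\end{align}
The limiting denominator is finite, being $\leq n_{y}[\mathrm{e}^{-q\tau_{x_{1}}^{-}},\tau_{x_{1}}^{-}<\infty]<\infty$ (so $W^{(q)}(\ell_{1},y)>0$), and it is strictly positive: $\bP_{y}[\tau_{\ell_{1}}<\infty]>0$ together with the excursion decomposition of the path from $y$ forces $n_{y}[\tau_{\ell_{1}}^{-}<\infty]>0$, hence $n_{y}[\mathrm{e}^{-q\tau_{\ell_{1}}^{-}},\tau_{\ell_{1}}^{-}<\infty]>0$ (so $W^{(q)}(\ell_{1},y)<\infty$). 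This is (i); and since $\ell_{1}=\inf\tilde{I}$ and $x\mapsto W^{(q)}(x,y)$ is monotone near $\ell_{1}$, the display above, combined with the continuity on $I$ from the bulk case, yields its (one-sided) continuity at $\ell_{1}$, i.e.\ (ii). (At $q=0$ one also uses that the excursions reaching every $x_{n}$ but not $\ell_{1}$ in finite time are $n_{y}$-negligible — again a feature of accessibility — so that the monotone limit correctly identifies $W^{(0)}(\ell_{1},y)$.)

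For (iii) with $x=\ell_{1}$, the occupation-time identity \eqref{potentialDensityFormula} with $T=\tau_{\ell_{1}}^{-}\wedge\tau_{z}^{+}$ gives
\begin{align}
	\bE_{y}\bigl[\tau_{\ell_{1}}^{-}\wedge\tau_{z}^{+}\wedge\zeta\bigr]=\int_{(\ell_{1},z)}\bE_{y}\Bigl[\int_{[0,T)}dL^{u}_{t}\Bigr]\,m(du), \n
\end{align}
and by the scale-function representation of potential densities on intervals in \cite{NobaGeneralizedScaleFunc}, the integrand $u\mapsto\bE_{y}[\int_{[0,T)}dL^{u}_{t}]$ is bounded on compact subsets of $(\ell_{1},z)$ and comparable to $W(\ell_{1},u)$ as $u\downarrow\ell_{1}$. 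Hence both the finiteness of $\bE_{y}[\tau_{\ell_{1}}^{-}\wedge\tau_{z}^{+}\wedge\zeta]$ and \eqref{eq24} at $x=\ell_{1}$ reduce to $\int_{(\ell_{1},z]}W^{(q)}(\ell_{1},u)\,m(du)<\infty$, whose only delicate part is the behaviour near $\ell_{1}$; this is the main obstacle. I would dispose of it by splitting $(\ell_{1},z]=(\ell_{1},c]\cup(c,z]$ with $c\in(\ell_{1},y)$: on $(c,z]$ one has $W^{(q)}(\ell_{1},u)\leq W^{(q)}(\ell_{1},z)<\infty$ by (i) and monotonicity in the second variable, while $m((c,z])<\infty$ as $[c,z]\subset I$ is compact; and on $(\ell_{1},c]$, finiteness of $\int_{(\ell_{1},c]}W^{(q)}(\ell_{1},u)\,m(du)$ is precisely the Feller-type integral test characterising $\ell_{1}$ as an \emph{accessible} boundary, hence equivalent to the standing hypothesis $\bP_{x}[\tau_{\ell_{1}}<\infty]>0$ (this equivalence being part of the scale-function boundary classification of \cite{NobaGeneralizedScaleFunc,noba2023analytic}). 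Finally \eqref{eq17} concerns only $x\to z$ with $z\in I$ and is already in the bulk case. (As the excerpt notes, one may instead rerun the proofs of \citet[Proposition 2.4, 2.6, 3.2]{noba2023analytic} with $\ell_{1}$ admitted throughout, which is routine once the accessibility test is available.)
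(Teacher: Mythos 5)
The paper gives no proof of this proposition: it cites \cite{noba2023analytic} for the case $x\in I$ and simply states that the extension to $x=\ell_{1}$ (when $\ell_{1}\notin I$ is accessible) follows ``by an obvious modification of the proof or limit procedure for the original results.'' Your proposal is precisely that limit procedure carried out in detail, so it matches the paper's intended (but omitted) argument; the one step you invoke without proof --- that $\int_{(\ell_{1},c]}W(\ell_{1},u)\,m(du)<\infty$ is the integral test equivalent to accessibility of $\ell_{1}$ --- is deferred to the cited boundary classification in the same way the paper defers the whole statement.
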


The following two formulas, the exit time formula (Proposition \ref{prop:exitProblem1} and \ref{prop:exitProblemZ}) and the potential density formula (Theorem \ref{thm:potentialDensity}), are fundamental to the theory of the scale function.
We interpret every integration on $(x,y)$ with $y \leq x$ as zero throughout the paper.

\begin{Prop}[{\citet[Theorem 3.4]{NobaGeneralizedScaleFunc}}] \label{prop:exitProblem1}
	Let $q \geq 0$.
	For $x \in \tilde{I}$ and $y,z \in I$ with $x {\leq} y {\leq} z$ and $x \neq z$,
	we have
	\begin{align}
		\bE_{y}[\mathrm{e}^{-q \tau^-_{x}}, \tau^-_{x} < \tau_{z}^{+}]
		= \frac{W^{(q)}(y,z)}{W^{(q)}(x,z)}. \label{eq56}
	\end{align}
\end{Prop}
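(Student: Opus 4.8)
The plan is to deduce \eqref{eq56} directly from the definition \eqref{scaleFunc} of the scale function via the excursion measure $n_{z}$, using the absence of negative jumps twice: once to replace hitting times of half-lines by hitting times of points (downward creeping), and once to exchange $\tau_{z}^{+}$ for $\tau_{z}$. First I would record the reformulation
\begin{align}
	\frac{W^{(q)}(y,z)}{W^{(q)}(x,z)} = \frac{n_{z}[\mathrm{e}^{-q\tau_{x}^{-}},\,\tau_{x}^{-}<\infty]}{n_{z}[\mathrm{e}^{-q\tau_{y}^{-}},\,\tau_{y}^{-}<\infty]}, \n
\end{align}
immediate from \eqref{scaleFunc} (both denominators lie in $(0,\infty)$ by Proposition \ref{prop:propertiesOfScaleFunc}(i)), so that it is enough to identify the right-hand side with $\bE_{y}[\mathrm{e}^{-q\tau_{x}^{-}},\,\tau_{x}^{-}<\tau_{z}^{+}]$. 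I would then dispose of the degenerate cases. If $x=y$, the ratio equals $1$, and $\tau_{x}^{-}=0$ $\bP_{x}$-a.s.\ since $x$ is regular for $I_{\leq x}$ (for $x>\ell_{1}$ this follows from (A2), as otherwise $X$ started at $x$ could never reach $I_{<x}$; if $x=\ell_{1}\in I$ then $I_{\leq x}=\{x\}$), so the left-hand side of \eqref{eq56} is also $1$. If $y=z$, then $\tau_{z}^{+}=0$ $\bP_{z}$-a.s.\ (as $z$ is regular for itself, being regular for $I_{<z}$ by (A2) since $I\cap[-\infty,z)\neq\emptyset$), so the left-hand side of \eqref{eq56} vanishes, while the right-hand side also vanishes because $n_{z}$ charges infinitely many short excursions immediately entering $I_{\leq z}$, whence $n_{z}[\mathrm{e}^{-q\tau_{z}^{-}},\,\tau_{z}^{-}<\infty]=\infty$ and $W^{(q)}(z,z)=0$. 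The case $x=\ell_{1}\notin I$ follows from the case $x\in I$ by letting $x\downarrow\ell_{1}$. So from now on assume $x<y<z$.

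Next I isolate the two consequences of \eqref{absenceOfNegativeJumps1}. Since a path with no negative jumps has a continuously decreasing running infimum, a path starting strictly above a level $w$ enters $I_{\leq w}$ only at $w$; hence $\tau_{w}^{-}=\tau_{w}$ along such paths, and an excursion of $n_{z}$ reaching $I_{\leq x}$ first passes through $y$, hitting it exactly. Moreover, $\{\tau_{x}^{-}<\tau_{z}^{+}\}=\{\tau_{x}^{-}<\tau_{z}\}$ $\bP_{y}$-a.s.: the inclusion $\supseteq$ is trivial since $\tau_{z}^{+}\leq\tau_{z}$; for $\subseteq$, if $\tau_{z}^{+}\leq\tau_{x}^{-}$ then $X_{\tau_{z}^{+}}\geq z$ while $X_{\tau_{x}^{-}}\leq x<z$, so the path must creep down through $z$ at some time in $(\tau_{z}^{+},\tau_{x}^{-}]$, forcing $\tau_{z}\leq\tau_{x}^{-}$. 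Consequently $\bE_{y}[\mathrm{e}^{-q\tau_{x}^{-}},\,\tau_{x}^{-}<\tau_{z}^{+}]=\bE_{y}[\mathrm{e}^{-q\tau_{x}^{-}},\,\tau_{x}^{-}<\tau_{z}]$.

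Then I would carry out the excursion computation by applying the strong Markov property of $n_{z}$ at the first time $T_{y}$ at which the excursion hits the level $y$. On $\{\tau_{x}^{-}<\infty\}$ one has $T_{y}<\infty$, the excursion equals $y$ at time $T_{y}$, and $T_{y}=\tau_{y}^{-}$ there by the previous paragraph; beyond $T_{y}$ the excursion evolves like $X$ under $\bP_{y}$ and ends at its first return to the point $z$, which---a path leaving $y$ may jump above $z$ but never below it---is exactly $\tau_{z}$. Thus ``the excursion reaches $I_{\leq x}$'' means precisely ``$\tau_{x}^{-}<\tau_{z}$'' for the $\bP_{y}$-path, so that
\begin{align}
	n_{z}[\mathrm{e}^{-q\tau_{x}^{-}},\,\tau_{x}^{-}<\infty] = n_{z}[\mathrm{e}^{-q\tau_{y}^{-}},\,\tau_{y}^{-}<\infty]\cdot\bE_{y}[\mathrm{e}^{-q\tau_{x}^{-}},\,\tau_{x}^{-}<\tau_{z}].\n
\end{align}
Dividing and combining with the two preceding displays yields \eqref{eq56}.

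I expect the main obstacle to be making this excursion step fully rigorous: invoking the $\sigma$-finite version of the strong Markov property of $n_{z}$ at $T_{y}$ on $\{T_{y}<\infty\}$, and---crucially---identifying ``first return of the excursion to $z$'' with the hitting time $\tau_{z}$ of the \emph{point} $z$ for the post-$T_{y}$ path. This is one of the places the no-negative-jumps hypothesis is used crucially, and it is what prevents the naive ``$\tau_{x}^{-}<\infty$'' argument---which would leave an extra excursion-measure normalization factor of the type in \eqref{107}---from producing the wrong constant. The degenerate cases are routine, resting on the remark (forced by (A2)) that $z$ can be neither a holding point nor irregular for $I_{<z}$ once there is state space below $z$.
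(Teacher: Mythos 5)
The paper does not prove this statement: it is quoted from \citet[Theorem 3.4]{NobaGeneralizedScaleFunc} and the proof is explicitly omitted. Your argument for the main case $x<y<z$ --- rewriting the ratio via the definition \eqref{scaleFunc}, using downward creeping to identify $\tau_{y}^{-}$ with the hitting time of the point $y$ and to show $\{\tau_{x}^{-}<\tau_{z}^{+}\}=\{\tau_{x}^{-}<\tau_{z}\}$ $\bP_{y}$-a.s., then applying the Markov property of $n_{z}$ at $T_{y}$ --- is exactly the intended route and is sound; the finiteness of $n_{z}[\,\cdot\,,\tau_{y}^{-}<\infty]$ needed to invoke the Markov property of the excursion measure is supplied by Proposition \ref{prop:propertiesOfScaleFunc}(i).

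Your treatment of the boundary case $y=z$, however, is wrong. You claim both sides of \eqref{eq56} vanish there because ``$z$ is regular for itself, being regular for $I_{<z}$'', so that $\tau_{z}^{+}=0$ $\bP_{z}$-a.s.\ and $W^{(q)}(z,z)=0$. Regularity for $I_{<z}$ does not imply regularity for $\{z\}$ for a process with no negative jumps: take $X_{t}=-ct+S_{t}$ with $c>0$ and $S$ a compound Poisson subordinator with $\bE S_{1}<c$ (a spectrally positive, bounded-variation L\'evy process satisfying (A1)--(A3)). Started at $z$ it drifts strictly below $z$ immediately, so $\tau_{z}^{+}>0$ a.s.; its returns to the point $z$ are discrete, so $n_{z}$ is a finite measure and $W^{(q)}(z,z)=1/n_{z}[\mathrm{e}^{-q\tau_{z}^{-}},\tau_{z}^{-}<\infty]>0$ (consistent with Remark \ref{rem:W(x,x)m(x)=0}, which only forces $W^{(q)}(z,z)m\{z\}=0$). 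In this example both sides of \eqref{eq56} with $y=z$ are strictly positive --- this is the familiar fact that a bounded-variation spectrally one-sided L\'evy process exits upward from its starting level before passing below it with probability $W(0)/W(a)>0$. So the identity at $y=z$ still holds, but it needs its own argument (the first-excursion decomposition of $\bP_{z}$ at the level $z$ itself, i.e.\ precisely the normalization issue you flag at the end of your write-up), not the dismissal you give. A milder instance of assertion-without-proof occurs in your $x=y$ case: $\tau_{x}^{-}=0$ $\bP_{x}$-a.s.\ does follow from (A2) and the absence of negative jumps for interior $x$, but your parenthetical (``otherwise $X$ could never reach $I_{<x}$'') is not the reason --- irregularity of $x$ for $I_{\leq x}$ would not by itself prevent reaching $I_{<x}$ at positive times --- and the subcase $x=\ell_{1}\in I$ is left unresolved.
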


\begin{Prop}[{\citet[Corollary 3.7]{NobaGeneralizedScaleFunc}}] \label{prop:exitProblemZ}
	For $q \geq 0$, $x \in \tilde{I}$ and $y \in I$, define
	\begin{align}
		Z^{(q)}(x,y) := 
			&1 + q\int_{(x,y)}W^{(q)}(x,u)m(du).
		\label{scaleFuncZ}
	\end{align}
	Then we have for $x \in \tilde{I}$ and $y,z \in I$ with $x \leq y \leq z$ and $x<z$
	\begin{align}
		\bE_{y}[\mathrm{e}^{-q(\tau_{z}^{+} \wedge \zeta)}, (\tau_{z}^{+} \wedge \zeta) < \tau^-_{x}] = Z^{(q)}(y,z) - \frac{W^{(q)}(y,z)}{W^{(q)}(x,z)} Z^{(q)}(x,z). \label{eq54}
	\end{align}
\end{Prop}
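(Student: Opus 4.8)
Throughout, assume $q>0$; for $q=0$ one has $Z^{(q)}\equiv 1$ and the identity is immediate from Proposition~\ref{prop:exitProblem1} and the $\bP_y$-a.s. finiteness of $\tau_x^-\wedge\tau_z^+\wedge\zeta$ (Proposition~\ref{prop:propertiesOfScaleFunc}~(iii)). Write $T:=\tau_x^-\wedge\tau_z^+\wedge\zeta$. Because $X$ has no negative jumps (Proposition~\ref{prop:absenceOfNegativeJumpsEquivalence}), on $\{\tau_x^-<\infty\}$ one has $X_{\tau_x^-}=x$, hence $\tau_x^-<\zeta$; together with $\bP_y[T<\infty]=1$ this shows that $\{\tau_x^-<\tau_z^+\}$ and $\{(\tau_z^+\wedge\zeta)<\tau_x^-\}$ partition $\Omega$ up to a $\bP_y$-null set, and that $\{\tau_x^-<\tau_z^+\}=\{\tau_x^-<\tau_z^+\wedge\zeta\}$ on this set. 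Splitting $\mathrm{e}^{-qT}$ along this partition and using Proposition~\ref{prop:exitProblem1} on the part where $X$ leaves at the bottom,
\[
	\bE_y[\mathrm{e}^{-q(\tau_z^+\wedge\zeta)},(\tau_z^+\wedge\zeta)<\tau_x^-]=\bE_y[\mathrm{e}^{-qT}]-\frac{W^{(q)}(y,z)}{W^{(q)}(x,z)},
\]
so the claim is equivalent to $\bE_y[\mathrm{e}^{-qT}]=Z^{(q)}(y,z)-\frac{W^{(q)}(y,z)}{W^{(q)}(x,z)}\bigl(Z^{(q)}(x,z)-1\bigr)$.

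To prove this I would write $\mathrm{e}^{-qT}=1-q\int_0^T\mathrm{e}^{-qt}\,dt$ and evaluate the resolvent term by the occupation time formula \eqref{potentialDensityFormula} with $f\equiv 1$ and the stopping time $T$:
\[
	\bE_y\Bigl[\int_0^T\mathrm{e}^{-qt}\,dt\Bigr]=\int_{(x,z)}g^{(q)}(y,u)\,m(du),\qquad g^{(q)}(y,u):=\bE_y\Bigl[\int_{[0,T)}\mathrm{e}^{-qt}\,dL^u_t\Bigr],
\]
where the $q$-potential density $g^{(q)}(y,\cdot)$ of $X$ killed on leaving $(x,z)$ is supported in $(x,z)$ since $X_t\in(x,z)$ for $t<T$. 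By the strong Markov property at the first hitting time $\tau_u$ of $u$, $g^{(q)}(y,u)=\bE_y[\mathrm{e}^{-q\tau_u},\tau_u<T]\,g^{(q)}(u,u)$. For $u\le y$, downward continuity gives $\tau_u=\tau_u^-$, $\tau_u^-<\tau_x^-$ and $\{\tau_u<T\}=\{\tau_u^-<\tau_z^+\}$, so Proposition~\ref{prop:exitProblem1} identifies the prefactor as $W^{(q)}(y,z)/W^{(q)}(u,z)$; for $u>y$ the corresponding prefactor has to be computed by conditioning on the (possibly strict) overshoot $X_{\tau_u^+}$ and applying Proposition~\ref{prop:exitProblem1} once on $(x,u)$ and once on $(u,z)$, while $g^{(q)}(u,u)$ is computed from the Itô excursion decomposition at $u$ via \eqref{107} and Definition~\ref{def:scaleFunc}. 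Carrying this out should yield the closed form
\[
	g^{(q)}(y,u)=\frac{W^{(q)}(x,y)\,W^{(q)}(u,z)}{W^{(q)}(x,z)}-W^{(q)}(u,y),\qquad u\in(x,z)
\]
(with $W^{(q)}(u,y)=0$ when $u>y$).

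Plugging this into the displayed integral — all integrals being finite by Proposition~\ref{prop:propertiesOfScaleFunc}~(iii) — and comparing with the target $\bE_y[\mathrm{e}^{-qT}]=Z^{(q)}(y,z)-\tfrac{W^{(q)}(y,z)}{W^{(q)}(x,z)}(Z^{(q)}(x,z)-1)$ reduces the whole statement to a scale-function identity converting the ``second-argument-fixed'' integrals $\int_{(x,z)}W^{(q)}(u,z)\,m(du)$ and $\int_{(x,y)}W^{(q)}(u,y)\,m(du)$ into the ``first-argument-fixed'' ones $\int_{(x,z)}W^{(q)}(x,u)\,m(du)=q^{-1}(Z^{(q)}(x,z)-1)$ and $\int_{(y,z)}W^{(q)}(y,u)\,m(du)=q^{-1}(Z^{(q)}(y,z)-1)$; this identity follows in turn from Proposition~\ref{prop:exitProblem1} and the properties collected in Proposition~\ref{prop:propertiesOfScaleFunc}. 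The degenerate endpoints $y=x$ and $y=z$ are checked by hand. The step I expect to be the main obstacle is the computation of the potential density $g^{(q)}$: both the overshoot analysis for $u>y$ and the evaluation of the diagonal term $g^{(q)}(u,u)$ (hence the bookkeeping with the excursion measure $n_u$), which is exactly what makes the $W^{(q)}$-algebra collapse into the function $Z^{(q)}$. A leaner alternative is to first establish a one-sided (upper-boundary) version of the formula and then obtain the two-sided statement by the strong Markov property at $\tau_x^-$, so that the coefficient of $W^{(q)}$ cancels; but justifying the form of the one-sided quantity needs the same ingredients.
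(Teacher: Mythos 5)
Your skeleton is the right one, and it is essentially how the cited reference obtains this corollary (the paper itself only quotes the result): split $\mathrm{e}^{-qT}$ for $T=\tau_x^-\wedge\tau_z^+\wedge\zeta$ along the two exit scenarios, evaluate the downward part by Proposition~\ref{prop:exitProblem1}, and compute $\bE_y[\mathrm{e}^{-qT}]=1-q\,\bE_y[\int_0^T\mathrm{e}^{-qt}dt]$ through the $q$-potential density. The genuine gap is the closed form you assert for that density. You write $g^{(q)}(y,u)=\frac{W^{(q)}(x,y)W^{(q)}(u,z)}{W^{(q)}(x,z)}-W^{(q)}(u,y)$, which is the transpose in $(y,u)$ of the correct expression; the correct one is already available in the paper as Theorem~\ref{thm:potentialDensity},
\[
\bE_y\Bigl[\int_0^{T}\mathrm{e}^{-qt}\,dL^u_t\Bigr]=\frac{W^{(q)}(x,u)W^{(q)}(y,z)}{W^{(q)}(x,z)}-W^{(q)}(y,u),
\]
and for a general standard process with no negative jumps the potential density is not symmetric in $(y,u)$ with respect to $m$, so the two expressions genuinely differ. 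Indeed your own strong-Markov step, $g^{(q)}(y,u)=\frac{W^{(q)}(y,z)}{W^{(q)}(u,z)}\,g^{(q)}(u,u)$ for $u\le y$, produces a function proportional to $W^{(q)}(y,z)$, i.e.\ the paper's form, not the one you wrote down. The error then forces you to invoke a ``conversion identity'' turning $\int_{(x,z)}W^{(q)}(u,z)\,m(du)$ into $\int_{(x,z)}W^{(q)}(x,u)\,m(du)$; this does not follow from Propositions~\ref{prop:exitProblem1} and~\ref{prop:propertiesOfScaleFunc}, and it amounts to asserting that the transposed kernel has the same $m$-integral as the true potential density, which fails in general for processes that are not self-dual.

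With the correct density none of the excursion-theoretic work you flag as ``the main obstacle'' is needed: integrating Theorem~\ref{thm:potentialDensity} over $u\in(x,z)$ against $m$ and multiplying by $q$, using $q\int_{(x,z)}W^{(q)}(x,u)m(du)=Z^{(q)}(x,z)-1$ and $q\int_{(x,z)}W^{(q)}(y,u)m(du)=Z^{(q)}(y,z)-1$ (the latter because $W^{(q)}(y,u)=0$ for $u<y$ and $W^{(q)}(y,y)m\{y\}=0$ by Remark~\ref{rem:W(x,x)m(x)=0}), gives
\[
\bE_y[\mathrm{e}^{-qT}]=1-\frac{W^{(q)}(y,z)}{W^{(q)}(x,z)}\bigl(Z^{(q)}(x,z)-1\bigr)+\bigl(Z^{(q)}(y,z)-1\bigr),
\]
and subtracting $\bE_y[\mathrm{e}^{-q\tau_x^-},\tau_x^-<\tau_z^+]=W^{(q)}(y,z)/W^{(q)}(x,z)$ yields \eqref{eq54}. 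Your reduction of the two exit events to a partition of $\Omega$ up to a $\bP_y$-null set, and the separate treatment of $q=0$, are fine as written.
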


\begin{Rem} \label{rem:W(x,x)m(x)=0}
	From \cite[Remark 2.5]{noba2023analytic}, we always have
	\begin{align}
		W^{(q)}(x,x)m\{x\} = 0 \quad (q \geq 0,\
		 x \in I \setminus \{ \ell_{1}\}), \label{}
	\end{align}
	which implies that the function $I \ni x \mapsto Z^{(q)}(x,y)$ is continuous.
\end{Rem}

\begin{Thm}[{\citet[Theorem 3.6]{NobaGeneralizedScaleFunc}}] \label{thm:potentialDensity}
	Let $q \geq 0$ and $x \in \tilde{I}$ and $y,z,u \in I$ with $u \in (x,z)$ and $y \in [x,z]$.
	We have
	\begin{align}
		\bE_{y}\left[ \int_{0}^{\tau^-_{x} \wedge \tau_{z}^{+}} \mathrm{e}^{-qt} dL^{u}_{t} \right]
		= &\frac{W^{(q)}(x,u)W^{(q)}(y,z)}{W^{(q)}(x,z)} - W^{(q)}(y,u) \label{} \\
		= &W^{(q)}(x,u) \bE_{y}[\mathrm{e}^{-q\tau^-_{x}}, \tau^-_{x} < \tau_{z}^{+}] - W^{(q)}(y,u). \label{potentialDensity}
	\end{align}
	Consequently, we have from \eqref{potentialDensityFormula}
	\begin{align}
		\int_{0}^{\infty}\mathrm{e}^{-qt}\bP_{y}[X_{t} \in du, \tau_{x}^{-}\wedge \tau_{z}^{+} \wedge \zeta > t]dt = (W^{(q)}(x,u) \bE_{y}[\mathrm{e}^{-q\tau^-_{x}}, \tau^-_{x} < \tau_{z}^{+}] - W^{(q)}(y,u))m(du).
	\end{align}
\end{Thm}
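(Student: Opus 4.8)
The plan is to reduce the identity to the case $y=u$ by the strong Markov property, to compute that case by excursion theory at the point $u$, and to recover general $y$ from the exit formula of Proposition \ref{prop:exitProblem1}; the equivalence of the two displayed right‑hand sides is then immediate from Proposition \ref{prop:exitProblem1}, and the final displayed formula follows from the occupation‑time identity \eqref{potentialDensityFormula}. Set $T:=\tau_x^-\wedge\tau_z^+$ and $h(y):=\bE_y[\int_0^{T}\mathrm e^{-qt}\,dL^u_t]$; since $L^u$ does not increase on $[T\wedge\zeta,\infty)$ this equals the integral over $[0,T\wedge\zeta)$. I would first treat $q>0$ and then let $q\downarrow0$ by monotone convergence, using Proposition \ref{prop:propertiesOfScaleFunc}(i),(iii) for finiteness of the right‑hand side. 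For $y\neq u$ the local time $L^u$ vanishes on $[0,\tau_u)$, so the strong Markov property at $\tau_u$ gives
\[
h(y)=\bE_y\big[\mathrm e^{-q\tau_u},\ \tau_u<T\big]\,h(u)=:\varphi(y)\,h(u),
\]
with both factors finite (use Proposition \ref{prop:propertiesOfScaleFunc}(iii), and for $q>0$ also \eqref{107}). It then suffices to identify $h(u)$ and $\varphi(y)$ in terms of scale functions.

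For $h(u)$ I would use the excursion decomposition of $X$ away from $u$: the excursions form a Poisson point process with intensity $ds\otimes n_u$, and, writing $\eta^u$ for the inverse local time, one has $\int_0^{T}\mathrm e^{-qt}\,dL^u_t=\int_0^{s^\ast}\mathrm e^{-q\eta^u_{s-}}\,ds$, where $s^\ast$ is the local‑time level of the first excursion that leaves $(x,z)$ or reaches $\partial$. Calling such excursions \emph{bad} and those returning to $u$ while staying alive in $(x,z)$ \emph{good}, the exponential formula for Poisson point processes yields
\[
h(u)=\int_0^\infty \mathrm e^{-q\delta_u s}\,\mathrm e^{-s\Psi(q)}\,ds=\frac{1}{q\delta_u+\Psi(q)},\qquad \Psi(q):=n_u\big[1-\mathrm e^{-q\tau_u};\ \mathrm{good}\big]+n_u[\,\mathrm{bad}\,].
\]
The point is then to rewrite $q\delta_u+\Psi(q)$ using $W^{(q)}$. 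For this I would use the defining formula \eqref{scaleFunc}, i.e.\ $n_u[\mathrm e^{-q\tau_x^-},\tau_x^-<\infty]=1/W^{(q)}(x,u)$ (this governs the excursions that are bad by falling below $x$), treat the excursions that are bad by reaching $I_{\ge z}$ or $\partial$ by splitting an excursion at its first passage to $I_{\ge z}$ and invoking Proposition \ref{prop:exitProblem1}, and use \eqref{107} for the full‑space normalisation. The expected outcome is $h(u)=\dfrac{W^{(q)}(x,u)W^{(q)}(u,z)}{W^{(q)}(x,z)}-W^{(q)}(u,u)$, which is the claimed identity at $y=u$.

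For $\varphi$ I would distinguish $y\ge u$ and $y<u$. If $y\in[u,z]$, the absence of negative jumps gives $\tau_u=\tau_u^-$ and $\{\tau_u<T\}=\{\tau_u^-<\tau_z^+\}$ under $\bP_y$, so $\varphi(y)=W^{(q)}(y,z)/W^{(q)}(u,z)$ by Proposition \ref{prop:exitProblem1}, and $h(y)=\varphi(y)h(u)$ is exactly the asserted formula since $W^{(q)}(y,u)=0$ for $y>u$. If $y\in[x,u)$, I would decompose at the first passage time $\tau_u^+$ of $I_{\ge u}$, which may overshoot $u$ by an upward jump: on $\{\tau_x^-<\tau_u^+\}$ there is no contribution, and on $\{\tau_u^+\le\tau_x^-\}$ the strong Markov property together with the case $y\ge u$ gives $\varphi(y)=\tfrac{1}{W^{(q)}(u,z)}\bE_y[\mathrm e^{-q\tau_u^+}W^{(q)}(X_{\tau_u^+},z);\,\tau_u^+\le\tau_x^-]$ (with the convention $W^{(q)}(\cdot,z)\equiv0$ on $I_{>z}$). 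Decomposing instead the event $\{\tau_x^-<\tau_z^+\}$ of Proposition \ref{prop:exitProblem1} at the same time and using $\bE_y[\mathrm e^{-q\tau_x^-};\tau_x^-<\tau_u^+]=W^{(q)}(y,u)/W^{(q)}(x,u)$ (again Proposition \ref{prop:exitProblem1}), one obtains $\bE_y[\mathrm e^{-q\tau_u^+}W^{(q)}(X_{\tau_u^+},z);\tau_u^+\le\tau_x^-]=W^{(q)}(y,z)-\tfrac{W^{(q)}(x,z)}{W^{(q)}(x,u)}W^{(q)}(y,u)$, hence $\varphi(y)=\big(W^{(q)}(y,z)-\tfrac{W^{(q)}(x,z)}{W^{(q)}(x,u)}W^{(q)}(y,u)\big)/W^{(q)}(u,z)$. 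Multiplying by $h(u)$ gives the stated identity for $y<u$ as well, the term $-W^{(q)}(y,u)$ being produced by this overshoot analysis.

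I expect the main obstacle to be the excursion bookkeeping for $h(u)$ — matching the upward‑exiting and killed excursions (including those that jump over $z$) with the scale‑function data in \eqref{scaleFunc} — together with the overshoot analysis when $y<u$; the boundary terms $W^{(q)}(u,u)$ and $W^{(q)}(z,z)$, and the case $x=\ell_1\notin I$, are handled using Remark \ref{rem:W(x,x)m(x)=0}, the convention $1/\infty=0$, and the limiting arguments indicated before Proposition \ref{prop:propertiesOfScaleFunc}. Once $h(y)=\varphi(y)h(u)$ has been identified with $\dfrac{W^{(q)}(x,u)W^{(q)}(y,z)}{W^{(q)}(x,z)}-W^{(q)}(y,u)$, the equality with $W^{(q)}(x,u)\,\bE_y[\mathrm e^{-q\tau_x^-},\tau_x^-<\tau_z^+]-W^{(q)}(y,u)$ is exactly Proposition \ref{prop:exitProblem1}, and the closing display follows by inserting the formula into \eqref{potentialDensityFormula}.
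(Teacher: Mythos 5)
This theorem is quoted from \citet[Theorem~3.6]{NobaGeneralizedScaleFunc}; the paper you are working from explicitly omits the proof, so there is no internal argument to compare against. Judged on its own merits, your plan (excursion theory at the point $u$, strong Markov reduction to $y=u$, exit identities for $\varphi$) is the right general strategy and is close in spirit to how such potential-density formulas are actually established. But as written it contains a concrete inconsistency, and the two hardest steps are asserted rather than carried out.

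The inconsistency is in the diagonal term. You claim both $h(u)=\frac{W^{(q)}(x,u)W^{(q)}(u,z)}{W^{(q)}(x,z)}-W^{(q)}(u,u)$ and $h(y)=\varphi(y)\,h(u)$ with $\varphi(y)=W^{(q)}(y,z)/W^{(q)}(u,z)$ for $y\in(u,z]$. Multiplying these out gives
\begin{align}
	h(y)=\frac{W^{(q)}(x,u)W^{(q)}(y,z)}{W^{(q)}(x,z)}-W^{(q)}(u,u)\,\frac{W^{(q)}(y,z)}{W^{(q)}(u,z)},
\end{align}
whereas the theorem asserts $h(y)=\frac{W^{(q)}(x,u)W^{(q)}(y,z)}{W^{(q)}(x,z)}$ for $y>u$ (since $W^{(q)}(y,u)=0$ there). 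These agree only if $W^{(q)}(u,u)=0$, which is \emph{not} guaranteed in this framework: Remark~\ref{rem:W(x,x)m(x)=0} only gives $W^{(q)}(u,u)m\{u\}=0$, and for instance a spectrally positive L\'evy process of bounded variation with drift $-d$ has $W^{(q)}(u,u)=1/d>0$. The same spurious term survives in your $y<u$ computation. So either your factorization $h(y)=\varphi(y)h(u)$ or your value of $h(u)$ is off by exactly the diagonal contribution; the resolution lies in how the chosen version of the occupation density $L^{u}$ charges the visit to $u$ at time $0$ under $\bP_{u}$ versus the visit at time $\tau_{u}$ under $\bP_{y}$, which is precisely the ``bookkeeping'' your sketch defers. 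In addition, the central identification $h(u)=\frac{1}{q\delta_{u}+\Psi(q)}=\frac{W^{(q)}(x,u)W^{(q)}(u,z)}{W^{(q)}(x,z)}-W^{(q)}(u,u)$ is only announced as an ``expected outcome'': turning the single reciprocal $\bigl(q\delta_{u}+\Psi(q)\bigr)^{-1}$ into that difference of two scale-function terms requires nontrivial identities relating $n_{u}$ restricted to upward-exiting, killed, and over-shooting excursions to $W^{(q)}(x,u)$, $W^{(q)}(u,z)$ and $W^{(q)}(x,z)$, none of which is derived. Until the diagonal convention is pinned down and that excursion computation is actually performed, the proposal does not constitute a proof.
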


\subsubsection{Analyticity of the scale function}

In Definition \ref{def:scaleFunc}, the scale function $W^{(q)}$ was defined for $q \geq 0$.
It was proved in \cite{noba2023analytic} that the function $[0,\infty) \ni q \mapsto W^{(q)}(x,y)$ can be analytically extended to the entire function for each $x \in \tilde{I}$ and $y \in I$.
For measurable functions $f,g: \tilde{I} \times I \to \bR$ and $x \in \tilde{I}$ and $y \in I$, we denote
\begin{align}
	f \otimes g (x,y) := \int_{(x,y)}f(x,u)g(u,y) m(du), \label{}
\end{align}
when $f$ and $g$ are non-negative or
\begin{align}
	\int_{(x,y)}|f(x,u)g(u,y)| m(du) < \infty. \label{}
\end{align}
We write $f^{\otimes 1} := f$ and $f^{\otimes n} := f \otimes f^{n-1} \ (n \geq 2)$.
The following result characterizes the scale function as the solution of a Volterra integral equation.

\begin{Thm}[{\citet[Proposition 3.3, Theorem 3.5]{noba2023analytic}}]
	For every $q \geq 0$ and $x \in \tilde{I}$ and $z \in I$ with $x < z$, the function $f = W^{(q)}(x,\cdot)$ is a solution of the following equation:	\begin{align}
		f(y) = W(x,y) + q \int_{(x,y)}f(u) W (u,y) m(du) \quad (y \in I \cap [x,z]). \label{IntegralEqW-c} 
	\end{align}
	In addition, this solution is unique in the following sense: 
	for an $m$-null set $N \subset I$, if $f:I \cap [x,z] \to \bR$ satisfies \eqref{IntegralEqW-c} and
	\begin{align}
		\int_{(x, y)}|f(u)|W(u,y) m(du) < \infty  \label{eq04-c}
	\end{align}
	for $y \in [x,z] \setminus N$,
	then it follows $f(y) = W^{(q)}(x,y)$ for $y \in [x,z] \setminus N$.
\end{Thm}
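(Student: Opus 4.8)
The two assertions have a rather different character, and the plan is to treat them separately after one preliminary kernel bound. The uniqueness is a Volterra--Gronwall iteration, routine once the iterated kernels $W^{\otimes n}$ are controlled; the substantive claim is that $W^{(q)}(x,\cdot)$ actually \emph{solves} \eqref{IntegralEqW-c}, and this is where the excursion-theoretic meaning of \eqref{scaleFunc} and the identities of Proposition \ref{prop:exitProblem1} and Theorem \ref{thm:potentialDensity} come in. For the preliminary bound I would first record that, for each fixed $y$, the map $x\mapsto W^{(q)}(x,y)$ is non-increasing on $\tilde I\cap[\ell_1,y]$: indeed $x\mapsto\tau^-_x$ is non-increasing (the set $I_{\leq x}$ grows with $x$), so $x\mapsto n_{y}[\mathrm{e}^{-q\tau^-_x},\tau^-_x<\infty]$ is non-decreasing and $W^{(q)}(\cdot,y)$ is its reciprocal by \eqref{scaleFunc}. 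Put $\Phi_x(y):=\int_{(x,y)}W(x,u)\,m(du)$, finite by \eqref{eq24}. Writing the $(n-1)$-fold integral defining $W^{\otimes n}(x,y)$ over the ordered simplex $\{x<u_1<\dots<u_{n-1}<y\}$ and estimating, by this monotonicity, $W(u_{i-1},u_i)\le W(x,u_i)$ and $W(u_{n-1},y)\le W(x,y)$, the integrand is dominated by the symmetric function $W(x,y)\prod_{i=1}^{n-1}W(x,u_i)$, so symmetrising the simplex gives
\[
 W^{\otimes n}(x,y)\ \le\ W(x,y)\,\frac{\Phi_x(y)^{\,n-1}}{(n-1)!}\qquad(n\ge 1);
\]
in particular $\sum_{n\ge1}q^{n-1}W^{\otimes n}(x,y)$ converges absolutely, locally uniformly in $q\in\bC$.

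For uniqueness, let $f$ satisfy \eqref{IntegralEqW-c} and \eqref{eq04-c} off an $m$-null set $N$, and set $g:=f-W^{(q)}(x,\cdot)$, which again satisfies \eqref{eq04-c} granting the existence proved below. Then $g(y)=q\int_{(x,y)}g(u)W(u,y)\,m(du)$ for $y\in[x,z]\setminus N$; since $N$ is $m$-null the integral only involves $u\notin N$, so one substitutes this equation for $g(u)$ and iterates, the use of Fubini at each stage being legitimate because $\int_{(x,y)}|g(u)|W^{\otimes n}(u,y)\,m(du)\le\frac{\Phi_x(y)^{\,n-1}}{(n-1)!}\int_{(x,y)}|g(u)|W(u,y)\,m(du)<\infty$ by the bound above and \eqref{eq04-c} (using $\Phi_u(y)\le\Phi_x(y)$ for $u\ge x$). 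This yields $g(y)=q^{n}\int_{(x,y)}g(u)W^{\otimes n}(u,y)\,m(du)$ for all $n$, hence
\[
 |g(y)|\ \le\ |q|^{n}\,\frac{\Phi_x(y)^{\,n-1}}{(n-1)!}\int_{(x,y)}|g(u)|W(u,y)\,m(du)\ \xrightarrow[n\to\infty]{}\ 0 ,
\]
so $f=W^{(q)}(x,\cdot)$ on $[x,z]\setminus N$.

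For existence, fix $x\in\tilde I$ and an arbitrary $y\in I$ with $x<y$; I would prove $W^{(q)}(x,y)=W(x,y)+q\,(W^{(q)}\otimes W)(x,y)$, which is \eqref{IntegralEqW-c} (the integral there being $(W^{(q)}\otimes W)(x,y)$). The first step is a self-consistency identity: for $x\le w<y$, write $1-\mathrm{e}^{-q\tau^-_x}=q\int_0^{\tau^-_x}\mathrm{e}^{-qt}\,dt$, apply the Markov property at time $t$ on $\{t<\tau^-_x\wedge\tau^+_y\}$ --- where $\bP_{X_t}[\tau^-_x<\tau^+_y]=W(X_t,y)/W(x,y)$ by the case $q=0$ of \eqref{eq56} --- and then invoke \eqref{potentialDensityFormula} together with \eqref{potentialDensity}; substituting $\bE_w[\mathrm{e}^{-q\tau^-_x},\tau^-_x<\tau^+_y]=W^{(q)}(w,y)/W^{(q)}(x,y)$ from \eqref{eq56}, rearranging, and using that $W^{(q)}$ and $W$ vanish off $\{\text{first}\le\text{second}\}$ (with Remark \ref{rem:W(x,x)m(x)=0} at the diagonal) to rewrite the integrals as $\otimes$-convolutions, one is left with
\[
 \frac{W(w,y)+q\,(W^{(q)}\otimes W)(w,y)}{W^{(q)}(w,y)}\ =\ \frac{W(x,y)+q\,(W^{(q)}\otimes W)(x,y)}{W^{(q)}(x,y)} .
\]
Since the base point $x$ here is arbitrary in $\tilde I\cap[\ell_1,y)$, the function $A(w):=W(w,y)+q\,(W^{(q)}\otimes W)(w,y)$ equals $c\,W^{(q)}(w,y)$ for a constant $c=c(q,y)$; the remaining task is $c=1$. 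Letting $w\uparrow y$, the monotonicity $W(u,y)\le W(w,y)$ and \eqref{eq17} give $(W^{(q)}\otimes W)(w,y)\le W(w,y)\int_{(w,y]}W^{(q)}(w,u)\,m(du)=o\bigl(W(w,y)\bigr)$, so $A(w)/W(w,y)\to1$; and $W^{(q)}(w,y)/W(w,y)=n_y[\tau^-_w<\infty]/n_y[\mathrm{e}^{-q\tau^-_w},\tau^-_w<\infty]\to1$, because $n_y[\tau^-_w<\infty]\to\infty$ (every interior point being regular for the points below it, by the absence of negative jumps and (A2)) while $n_y[\tau^-_w>\varepsilon,\tau^-_w<\infty]$ stays bounded, i.e.\ $\tau^-_w\to0$ in $n_y[\,\cdot\mid\tau^-_w<\infty]$-probability. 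Hence $c=1$. (The degenerate case $y=x$, when it occurs, reduces to $W^{(q)}(x,x)=W(x,x)$, which is immediate since both sides vanish for $x$ regular below and $W^{(q)}(x,x)m\{x\}=0$ by Remark \ref{rem:W(x,x)m(x)=0} otherwise.)

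The hard part is the existence, and within it the normalisation $c=1$: this is precisely the assertion that the excursion-defined $W^{(q)}$ \emph{is} the Volterra solution, not merely proportional to it, and I expect producing the displayed self-consistency identity to require careful bookkeeping of the boundary terms $W^{(q)}(x,v)\,W^{(q)}(w,y)/W^{(q)}(x,y)$ coming from \eqref{potentialDensity}. As a by-product, combining the uniqueness with the termwise identity $W(x,y)+q\sum_{n\ge1}q^{n-1}(W^{\otimes n}\otimes W)(x,y)=\sum_{n\ge1}q^{n-1}W^{\otimes n}(x,y)$ (Fubini, $W^{\otimes(n+1)}=W^{\otimes n}\otimes W$) identifies the unique solution of \eqref{IntegralEqW-c} with the series of the first paragraph, which is entire in $q$ --- yielding the analytic continuation of $q\mapsto W^{(q)}(x,y)$ used elsewhere.
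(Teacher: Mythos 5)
The paper does not actually prove this statement: it is imported verbatim from \citet{noba2023analytic} (Proposition 3.3 and Theorem 3.5), and the author explicitly omits the proof. So your proposal can only be judged on its own merits. Your uniqueness argument (the Volterra--Gronwall iteration based on the bound $W^{\otimes n}(x,y)\le W(x,y)\bar W(x,y)^{n-1}/(n-1)!$, which is just \eqref{eq01}) is correct and standard. Your existence argument is also structurally sound: writing $1-\mathrm{e}^{-q\tau^-_x}=q\int_0^{\tau^-_x}\mathrm{e}^{-qt}dt$, applying the Markov property on $\{t<\tau^-_x\wedge\tau^+_y\}$ together with \eqref{eq56} and Theorem \ref{thm:potentialDensity} does yield, after the rearrangement you describe, the identity $A(w)/W^{(q)}(w,y)=A(x)/W^{(q)}(x,y)$ with $A(w)=W(w,y)+q\,(W^{(q)}\otimes W)(w,y)$; I checked this computation and it is right.

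The genuine gap is in the normalisation $c=1$. You justify $W^{(q)}(w,y)/W(w,y)\to 1$ as $w\uparrow y$ by asserting that $n_y[\tau^-_w<\infty]\to\infty$, ``every interior point being regular for the points below it.'' Regularity downward does hold under (A2) and the absence of negative jumps, but it does \emph{not} imply that $n_y[\tau^-_w<\infty]=1/W(w,y)$ blows up. For the analogue of a bounded-variation spectrally negative L\'evy process --- e.g.\ a negative drift plus a compound Poisson process of positive jumps --- the process crosses level $y$ downward only finitely often in finite time, so $W(w,y)$ converges to a \emph{strictly positive} limit as $w\uparrow y$ and $n_y[\tau^-_w<\infty]$ stays bounded. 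Your ``bounded numerator over exploding denominator'' argument then fails. The conclusion is still true, but it must be proved differently: one shows that $n_y[1-\mathrm{e}^{-q\tau^-_w},\tau^-_w<\infty]\to 0$, using that on $\bigcup_{w<y}\{\tau^-_w<\infty\}$ an excursion that ever goes below $y$ must do so immediately (by the absence of negative jumps it cannot first go above $y$ and come back down without ending), so $\tau^-_w\downarrow 0$ $n_y$-a.e.\ there, together with the domination $1-\mathrm{e}^{-q\tau^-_w}\le 1-\mathrm{e}^{-q\tau_y}$ and $n_y[1-\mathrm{e}^{-q\tau_y}]<\infty$ from \eqref{107}. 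With that replacement the proof closes; as written, the step as justified is false for a nontrivial class of the processes covered by the theorem.
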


The equation \eqref{IntegralEqW-c} can be easily solved by iteration procedure, and we have a series expansion of the scale function:
\begin{align}
	W^{(q)}(x,y) = \sum_{n \geq 0}q^{n}W^{\otimes (n+1)}(x,y). \label{eq43}
\end{align}
The convergence of the RHS for every $q \in \bC$ is ensured by the following proposition, and hence we can analytically extend the function $[0,\infty) \ni q \mapsto W^{(q)}(x,y)$ to an entire function.
We denote
\begin{align}
	\bar{W}(x,y) := \int_{(x,y)}W(x,u)m(du) \quad (x \in \tilde{I},\ y \in I \cup \{ \ell_{2} \}). \label{}
\end{align}

\begin{Prop}[{\citet[Proposition 3.3]{noba2023analytic}}] \label{prop:defOfM}
	For every $n \geq 1$ and $x \in \tilde{I}$ and $y \in I$ with $x < y$, we have
	\begin{align}
		W^{\otimes n} (x,y) \leq \frac{W(x,y)}{(n-1)!} \bar{W}(x,y)^{n-1}. \label{eq01}
	\end{align}
	This implies that for $q \in \bC$ 
	\begin{align}
		\sum_{n \geq 0}|q|^{n} W^{\otimes (n+1)}(x,y) \leq W(x,y) \mathrm{e}^{|q|\bar{W}(x,y)}. \label{eq03}
	\end{align}
\end{Prop}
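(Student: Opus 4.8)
The plan is to deduce the pointwise estimate \eqref{eq01} by expressing $W^{\otimes n}$ as an iterated integral over an ordered simplex, bounding the integrand by means of a monotonicity property of $W$, and then comparing the simplex integral with the corresponding integral over a cube; the estimate \eqref{eq03} will then drop out by summing the associated exponential series.

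The one structural ingredient I would isolate first is that $W$ is non-increasing in its first argument: for $a,b,c$ with $a\le b\le c$ and $a<c$ one has $W(b,c)\le W(a,c)$. This is immediate from Proposition \ref{prop:exitProblem1} with $q=0$, since $\bP_{b}[\tau_{a}^{-}<\tau_{c}^{+}]=W(b,c)/W(a,c)$, the left-hand side is at most $1$, and $W(a,c)\in(0,\infty)$ by Proposition \ref{prop:propertiesOfScaleFunc}(i). Next I would unfold the definition of $\otimes$: by induction on $n$, using Tonelli's theorem (all integrands being non-negative), for $x\in\tilde I$, $y\in I$ with $x<y$, and with the convention $u_{0}:=x$, $u_{n}:=y$,
\begin{align}
W^{\otimes n}(x,y)=\int_{\{x<u_{1}<\cdots<u_{n-1}<y\}}\prod_{i=0}^{n-1}W(u_{i},u_{i+1})\,m(du_{1})\cdots m(du_{n-1}),
\end{align}
where for $n=1$ the integral is empty and this reads $W^{\otimes 1}(x,y)=W(x,y)$. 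On the simplex, applying the monotonicity to each factor of index $i\ge1$ (using $x\le u_{i}\le u_{i+1}$ and $x<u_{i+1}$) gives $W(u_{i},u_{i+1})\le W(x,u_{i+1})$, whence
\begin{align}
\prod_{i=0}^{n-1}W(u_{i},u_{i+1})\le W(x,y)\prod_{j=1}^{n-1}W(x,u_{j}).
\end{align}

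Integrating this inequality and noting that the right-hand side is symmetric and non-negative in $(u_{1},\dots,u_{n-1})$, the integral over $\{x<u_{1}<\cdots<u_{n-1}<y\}$ is at most $1/(n-1)!$ times the integral over the full cube $(x,y)^{n-1}$, since the complementary diagonal set $\{u_{i}=u_{j}\text{ for some }i\neq j\}$ contributes a non-negative amount to the latter. As the cube integral equals $\bar{W}(x,y)^{n-1}$, this is precisely \eqref{eq01}. Summing over $n$ then gives
\begin{align}
\sum_{n\ge0}|q|^{n}W^{\otimes(n+1)}(x,y)\le W(x,y)\sum_{n\ge0}\frac{\bigl(|q|\bar{W}(x,y)\bigr)^{n}}{n!}=W(x,y)\,\mathrm{e}^{|q|\bar{W}(x,y)},
\end{align}
which is \eqref{eq03}.

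I do not expect any genuine obstacle; the only point requiring attention is that the reference measure $m$ may carry atoms. This affects the simplex-versus-cube comparison, where one must check that the inequality runs in the favourable direction (it does, the diagonal contributing positively), and the unfolding step, where one should observe that the nested integrations produced by $\otimes$ genuinely range over the \emph{open} simplex because each inner integration is over an open interval. An alternative that avoids the combinatorial comparison altogether is a direct induction through $W^{\otimes n}=W^{\otimes(n-1)}\otimes W$, combining $W(u,y)\le W(x,y)$ with the elementary estimate $\int_{(x,y)}W(x,u)\bar{W}(x,u)^{n-2}\,m(du)\le\frac{1}{n-1}\bar{W}(x,y)^{n-1}$, which itself follows from $(n-1)s^{n-2}(t-s)\le t^{n-1}-s^{n-1}$ for $0\le s\le t$ (an inequality that also correctly absorbs the jumps of $u\mapsto\bar{W}(x,u)$).
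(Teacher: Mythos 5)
Your proof is correct. Note, however, that the paper itself offers no proof to compare against: Proposition \ref{prop:defOfM} is quoted verbatim from \citet[Proposition 3.3]{noba2023analytic} and the proof is explicitly omitted. Judged on its own merits, your argument is sound: the monotonicity $W(b,c)\le W(a,c)$ is correctly extracted from Proposition \ref{prop:exitProblem1} with $q=0$ together with Proposition \ref{prop:propertiesOfScaleFunc}(i); the unfolding of $W^{\otimes n}$ as an integral over the open ordered simplex is a clean induction via Tonelli; and you are right that the only delicate point is the simplex-versus-cube comparison in the presence of atoms of $m$, which you resolve in the favourable direction (the $(n-1)!$ permuted open simplices are disjoint subsets of the cube, so non-negativity of the symmetric integrand gives the inequality even though the diagonal is not covered). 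The standard proof in the cited source proceeds by the induction you sketch at the end --- $W(u,y)\le W(x,y)$ combined with the elementary estimate $\int_{(x,y)}W(x,u)\bar W(x,u)^{n-2}m(du)\le\frac{1}{n-1}\bar W(x,y)^{n-1}$, justified via $(n-1)s^{n-2}(t-s)\le t^{n-1}-s^{n-1}$ to absorb the jumps of $u\mapsto\bar W(x,u)$ --- so your ``alternative'' is in fact the usual route, while your main symmetrization argument is a slightly more global but equally valid variant. One cosmetic remark: the paper defines $f^{\otimes n}:=f\otimes f^{\otimes(n-1)}$, whereas your alternative uses $W^{\otimes n}=W^{\otimes(n-1)}\otimes W$; the two agree by associativity of $\otimes$ (Tonelli for non-negative kernels), which is worth a one-line remark if you keep that version.
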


We also have the following useful identity.

\begin{Prop}[{\citet[Proposition 3.6]{noba2023analytic}}] \label{prop:resolventIdentityW}
	For $q,r \in \bC$ and $x \in \tilde{I}$ and $y \in I$, we have
	\begin{align}
		W^{(q)}(x,y) - W^{(r)}(x,y) = (q-r) W^{(q)} \otimes W^{(r)} (x,y) = (q-r) W^{(r)} \otimes W^{(q)} (x,y). \label{resolventIdentityW}
	\end{align}
\end{Prop}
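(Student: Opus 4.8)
The plan is to read the identity off the series representation
$W^{(q)}(x,y) = \sum_{n \geq 0} q^{n} W^{\otimes (n+1)}(x,y)$,
which is valid for every $q \in \bC$, $x \in \tilde{I}$ and $y \in I$. If $x \geq y$, every integral over $(x,y)$ is empty, only the term $n = 0$ survives, and each of the three quantities in the claimed identity equals $W(x,y)$, so there is nothing to prove; hence I assume $x < y$, so that $\bar{W}(x,y) < \infty$ (Proposition \ref{prop:propertiesOfScaleFunc}).

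Two elementary facts about the bilinear operation $\otimes$ are needed. First, $\otimes$ is associative on non-negative kernels: for non-negative $f,g,h$, both $(f \otimes g) \otimes h(x,y)$ and $f \otimes (g \otimes h)(x,y)$ equal $\iint_{\{x < u < v < y\}} f(x,u)\,g(u,v)\,h(v,y)\,m(du)\,m(dv)$ by Tonelli's theorem, whence by an immediate induction $W^{\otimes p} \otimes W^{\otimes q} = W^{\otimes(p+q)}$ for all integers $p,q \geq 1$. Second, the double series $\sum_{j,k \geq 0} |q|^{j}\,|r|^{k}\,W^{\otimes(j+k+2)}(x,y)$ converges: using the bound $W^{\otimes n}(x,y) \leq W(x,y)\,\bar{W}(x,y)^{n-1}/(n-1)!$ of Proposition \ref{prop:defOfM} together with $(j+k+1)! \geq j!\,k!$, it is dominated by $W(x,y)\,\bar{W}(x,y)\,\mathrm{e}^{(|q|+|r|)\bar{W}(x,y)} < \infty$.

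With these in hand, I would substitute the series for $W^{(q)}(x,\cdot)$ and for $W^{(r)}(\cdot,y)$ into $W^{(q)} \otimes W^{(r)}(x,y) = \int_{(x,y)} W^{(q)}(x,u)\,W^{(r)}(u,y)\,m(du)$. For each fixed $u \in (x,y)$ the double series in the integrand converges absolutely, its modulus summing to $W^{(|q|)}(x,u)\,W^{(|r|)}(u,y) < \infty$; hence term-by-term integration is justified by the domination just established, and with $W^{\otimes(j+1)} \otimes W^{\otimes(k+1)} = W^{\otimes(j+k+2)}$ and a regrouping of an absolutely convergent double series one gets
\begin{align}
	W^{(q)} \otimes W^{(r)}(x,y) = \sum_{j,k \geq 0} q^{j} r^{k}\, W^{\otimes(j+k+2)}(x,y) = \sum_{N \geq 1} \Bigl( \sum_{j=0}^{N-1} q^{j} r^{N-1-j} \Bigr) W^{\otimes(N+1)}(x,y).
\end{align}
Multiplying by $q - r$ telescopes the inner sum,
\begin{align}
	(q-r) \sum_{j=0}^{N-1} q^{j} r^{N-1-j} = \sum_{i=1}^{N} q^{i} r^{N-i} - \sum_{j=0}^{N-1} q^{j} r^{N-j} = q^{N} - r^{N},
\end{align}
so that $(q-r)\, W^{(q)} \otimes W^{(r)}(x,y) = \sum_{N \geq 0} (q^{N} - r^{N})\, W^{\otimes(N+1)}(x,y) = W^{(q)}(x,y) - W^{(r)}(x,y)$, the $N = 0$ term vanishing. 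Interchanging $q$ and $r$ gives $(r-q)\, W^{(r)} \otimes W^{(q)}(x,y) = W^{(r)}(x,y) - W^{(q)}(x,y)$, i.e.\ $(q-r)\, W^{(r)} \otimes W^{(q)}(x,y) = W^{(q)}(x,y) - W^{(r)}(x,y)$, which is the remaining equality.

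The only steps requiring genuine care are the two rearrangements — interchanging summation with integration and regrouping the double series — but both are controlled by the estimate $\sum_{j,k}|q|^{j}|r|^{k}W^{\otimes(j+k+2)}(x,y) < \infty$ above (together with Tonelli's theorem for the associativity of $\otimes$ on non-negative kernels), so I anticipate no real obstacle. If one preferred to bypass the double series, an alternative is to verify that $y \mapsto W^{(r)}(x,y) + (q-r)\,W^{(r)} \otimes W^{(q)}(x,y)$ solves the Volterra equation \eqref{IntegralEqW-c} and satisfies the integrability condition \eqref{eq04-c}, and then to invoke the uniqueness statement attached to \eqref{IntegralEqW-c}; the series computation above is, however, the shortest.
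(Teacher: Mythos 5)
Your proof is correct. Note that the paper itself offers no proof of this proposition --- it is quoted verbatim from \citet[Proposition 3.6]{noba2023analytic} --- so there is nothing to compare against; your argument (expand both factors via the series $W^{(q)} = \sum_{n\geq 0} q^{n}W^{\otimes(n+1)}$, justify the Fubini/regrouping steps with the factorial bound of Proposition \ref{prop:defOfM}, and telescope $(q-r)\sum_{j=0}^{N-1}q^{j}r^{N-1-j}=q^{N}-r^{N}$) is the standard route and is exactly in the spirit of how the paper handles the analogous identities for $Z^{(q)}$.
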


A similar characterization of $Z^{(q)}$ also holds,
and we can extend the function $[0,\infty) \ni q \mapsto Z^{(q)}(x,y)$ to an entire function.

\begin{Thm}[{\citet[Theorem 3.10]{noba2023analytic}}] \label{thm:Zexpansion}
	For every $q \geq 0$ and $x \in \tilde{I}$ and $z \in I$ with $x < z$, the function $f = Z^{(q)}(\cdot,z)$ is a solution of the following equation:
	\begin{align}
		f(y) = 1 + q \int_{(y,z)} W (y,u)f(u) m(du) \quad (y \in [x,z]). \label{IntegralEqZ-c} 
	\end{align}
	In addition, this solution is unique in the following sense: 
	for an $m$-null set $N \subset I$, if $f:[x,z] \to \bR$ satisfies \eqref{IntegralEqZ-c} and
	\begin{align}
		\int_{(y,z)}W(y,u)|f(u)| m(du) < \infty  \label{eq04Z-c}
	\end{align}
	for $y \in [x,z] \setminus N$,
	then it follows $f(y) = Z^{(q)}(y,z)$ for $y \in [x,z] \setminus N$.
	Consequently, the function $q \mapsto Z^{(q)}(y,z)$ is the following series expansion:
	\begin{align}
		Z^{(q)}(x,y) = 1 + q\bar{W}(x,y) + \sum_{n \geq 2}q^{n} W^{\otimes (n-1)} \otimes \bar{W}(x,y), \label{}
	\end{align}
	where the convergence of the RHS is ensured by Proposition \ref{prop:characterizationOfZ}.
\end{Thm}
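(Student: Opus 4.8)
\emph{Existence.} The plan is to check in turn that $f=Z^{(q)}(\cdot,z)$ solves \eqref{IntegralEqZ-c}, that the solution is unique under \eqref{eq04Z-c}, and that iterating the equation produces the stated series, whence convergence for all $q\in\bC$ and the entire extension of $q\mapsto Z^{(q)}(x,y)$. For existence, substitute $Z^{(q)}(u,z)=1+q\int_{(u,z)}W^{(q)}(u,v)m(dv)$ into the right-hand side of \eqref{IntegralEqZ-c}, obtaining
\[
1+q\int_{(y,z)}W(y,u)m(du)+q^{2}\int_{(y,z)}\int_{(u,z)}W(y,u)W^{(q)}(u,v)m(dv)m(du).
\]
Since $q\ge0$, Tonelli lets one swap the order of integration in the last term, turning it into $q^{2}\int_{(y,z)}W\otimes W^{(q)}(y,v)m(dv)$, which is finite by \eqref{eq24} (the latter also giving \eqref{eq04Z-c} for $f=Z^{(q)}(\cdot,z)$). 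By Proposition \ref{prop:resolventIdentityW} with $r=0$ one has $W\otimes W^{(q)}=W^{(q)}\otimes W$ and $q\,W^{(q)}\otimes W(y,v)=W^{(q)}(y,v)-W(y,v)$ simultaneously, so the whole expression collapses to $1+q\int_{(y,z)}W^{(q)}(y,v)m(dv)=Z^{(q)}(y,z)$.

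\emph{Uniqueness.} If $f$ is another (measurable) solution, $g:=f-Z^{(q)}(\cdot,z)$ satisfies the homogeneous equation $g(y)=q\int_{(y,z)}W(y,u)g(u)m(du)$ for $y\in[x,z]\setminus N$; since $m(N)=0$, the integral ignores the values of $g$ on $N$. Iterating $n$ times and collapsing the resulting ordered-simplex integral by Fubini --- legitimate because the bound $W^{\otimes n}(y,u)\le\frac{W(y,u)}{(n-1)!}\bar{W}(y,z)^{n-1}$ of Proposition \ref{prop:defOfM} reduces every $n$-fold integral to the finite quantity $\int_{(y,z)}W(y,u)|g(u)|m(du)$ supplied by \eqref{eq04Z-c} --- gives $g(y)=q^{n}\int_{(y,z)}W^{\otimes n}(y,u)g(u)m(du)$. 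The same bound then forces $|g(y)|\le\frac{|q|^{n}\bar{W}(y,z)^{n-1}}{(n-1)!}\int_{(y,z)}W(y,u)|g(u)|m(du)\to0$ as $n\to\infty$ (using $\bar{W}(y,z)<\infty$ from \eqref{eq24}), so $g\equiv0$ off $N$.

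\emph{Series expansion.} With $\cK h(y):=\int_{(y,z)}W(y,u)h(u)m(du)$, the two previous steps identify $Z^{(q)}(\cdot,z)$ as the unique fixed point of $h=1+q\cK h$, hence $Z^{(q)}(\cdot,z)=\sum_{n\ge0}q^{n}\cK^{n}1$. A short Fubini computation using $W\otimes W^{\otimes(n-1)}=W^{\otimes n}$ gives $\cK^{n}1(y)=\int_{(y,z)}W^{\otimes n}(y,u)m(du)=W^{\otimes(n-1)}\otimes\bar{W}(y,z)$ for $n\ge1$, which is exactly the asserted expansion after separating the $n=1$ term. Proposition \ref{prop:defOfM} further yields $W^{\otimes(n-1)}\otimes\bar{W}(x,y)\le\bar{W}(x,y)^{n}/(n-1)!$, so the series converges locally uniformly in $q\in\bC$ and defines an entire function; for complex $q$ one takes the series as the definition of $Z^{(q)}$ and confirms \eqref{IntegralEqZ-c} by interchanging $\cK$ with the sum, justified by the same absolute bound. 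The step I expect to be most delicate is the bookkeeping in the uniqueness argument --- keeping the iterated integrals finite, the exceptional $m$-null set $N$ under control, and Fubini valid at every iteration --- but once the estimate of Proposition \ref{prop:defOfM} is in hand, the whole argument is essentially classical Volterra theory.
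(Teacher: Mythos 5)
Your argument is correct. Note that the paper itself does not prove this statement — it is quoted from \citet[Theorem 3.10]{noba2023analytic} and the proof is explicitly omitted — but your three steps (verifying that $Z^{(q)}(\cdot,z)$ solves \eqref{IntegralEqZ-c} via the resolvent identity \eqref{resolventIdentityW} with $r=0$, the Volterra-type uniqueness through iteration and the factorial bound of Proposition \ref{prop:defOfM}, and the resulting Neumann series giving the entire extension) constitute exactly the standard argument the cited reference relies on, with the Fubini/Tonelli justifications and the handling of the null set $N$ done properly.
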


\begin{Prop}[{\citet[Theorem 3.10]{noba2023analytic}}] \label{prop:characterizationOfZ}
	For every $n \geq 1$ and $x \in \tilde{I}$ and $y \in I$, we have
	\begin{align}
		W^{\otimes n} \otimes \bar{W}(x,y) = \int_{(x,y)}W^{\otimes(n+1)}(x,u)m(du) \leq \frac{\bar{W}(x,y)^{n+1}}{n!}. \label{} 
	\end{align}
	This implies that for $q \in \bC$
	\begin{align}
		\bar{W}(x,y) + \sum_{n \geq 1}|q|^{n} W^{\otimes n} \otimes \bar{W}(x,y) \leq \bar{W}(x,y)\mathrm{e}^{|q|\bar{W}(x,y)}. \label{}
	\end{align}
\end{Prop}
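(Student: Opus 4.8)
The plan is to establish the equality first and the inequality afterwards; throughout, all integrands are non-negative, so Tonelli's theorem permits unrestricted interchange of the order of integration. For the equality, we unfold $\bar W(u,y)=\int_{(u,y)}W(u,v)\,m(dv)$ inside the definition of $W^{\otimes n}\otimes\bar W(x,y)$ and swap the two $m$-integrations:
\[
W^{\otimes n}\otimes\bar W(x,y)=\int_{(x,y)}\int_{(u,y)}W^{\otimes n}(x,u)\,W(u,v)\,m(dv)\,m(du)=\int_{(x,y)}\Bigl(\int_{(x,v)}W^{\otimes n}(x,u)\,W(u,v)\,m(du)\Bigr)m(dv),
\]
and the inner integral equals $(W^{\otimes n}\otimes W)(x,v)=W^{\otimes(n+1)}(x,v)$ since $\otimes$ is associative (again Tonelli). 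Relabelling $v$ as $u$ gives the asserted identity.

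For the inequality we may assume $\bar W(x,y)<\infty$, since otherwise the right-hand side is $\infty$. Proposition \ref{prop:defOfM}, applied with $n+1$ in place of $n$ and with $u$ in place of $y$, gives $W^{\otimes(n+1)}(x,u)\le\frac1{n!}W(x,u)\,\bar W(x,u)^{n}$ for every $u\in(x,y)$, so it suffices to show
\[
\int_{(x,y)}\bar W(x,u)^{n}\,W(x,u)\,m(du)\le\frac{\bar W(x,y)^{n+1}}{n+1}.
\]
Let $\mu(du):=W(x,u)\,m(du)$, a finite measure on $(x,y)$ with total mass $\bar W(x,y)$, and let $G(u):=\mu((x,u])$ be its right-continuous distribution function, so that $\bar W(x,u)=\mu((x,u))=G(u-)$ and the left-hand side above equals $\int_{(x,y)}G(u-)^{n}\,dG(u)$. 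The convexity inequality $\frac{b^{n+1}-a^{n+1}}{n+1}\ge a^{n}(b-a)$ for $0\le a\le b$ shows that each jump of $u\mapsto\frac1{n+1}G(u)^{n+1}$ is at least $G(u-)^{n}$ times the corresponding jump of $G$, while on the continuous part the two differentials agree; integrating over $(x,y)$ and using $G(x)=0$ and $G(y-)=\bar W(x,y)$ yields $\int_{(x,y)}G(u-)^{n}\,dG(u)\le\frac1{n+1}\bar W(x,y)^{n+1}$, as needed. Hence $W^{\otimes n}\otimes\bar W(x,y)\le\bar W(x,y)^{n+1}/(n+1)!\le\bar W(x,y)^{n+1}/n!$, and the last displayed estimate of the statement follows by summing, $\bar W(x,y)+\sum_{n\ge1}|q|^{n}\bar W(x,y)^{n+1}/n!=\bar W(x,y)\,\mathrm{e}^{|q|\bar W(x,y)}$.

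The Tonelli exchanges and the associativity of $\otimes$ are routine because everything is non-negative. The only delicate point is the passage from the formal chain rule $d\bigl(\tfrac1{n+1}G^{n+1}\bigr)=G(u-)^{n}\,dG$ to an inequality in the presence of atoms of $m$ (equivalently of $\mu$); this is precisely what the convexity estimate supplies, and it reduces to the fundamental theorem of calculus when $m$ is atomless. I expect this atom-handling step to be the only part requiring care.
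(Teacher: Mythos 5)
Your proof is correct; note that the paper itself gives no argument here --- the proposition is quoted verbatim from \citet[Theorem 3.10]{noba2023analytic} --- so there is nothing internal to compare against. The equality via Tonelli and associativity of $\otimes$ is exactly the natural route, and your use of Proposition \ref{prop:defOfM} (a result the paper also states without proof, so no circularity) is legitimate. One remark: the Lebesgue--Stieltjes/convexity argument you flag as the delicate step is only needed because you aim for the sharper constant $1/(n+1)!$. The constant actually asserted in the statement is $1/n!$, and that follows at once from the crude bound $\bar W(x,u)\le\bar W(x,y)$ for $u\in(x,y)$, since then
\begin{align}
\int_{(x,y)}W^{\otimes(n+1)}(x,u)\,m(du)\le\frac{1}{n!}\int_{(x,y)}\bar W(x,u)^{n}W(x,u)\,m(du)\le\frac{\bar W(x,y)^{n}}{n!}\,\bar W(x,y)=\frac{\bar W(x,y)^{n+1}}{n!},
\end{align}
which sidesteps the atom-handling entirely. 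Your stronger estimate is of course also valid (and the change-of-variables inequality $\int_{(x,y)}G(u-)^{n}\,dG(u)\le G(y-)^{n+1}/(n+1)$ is correctly justified by the convexity of $t\mapsto t^{n+1}$ at the jumps), but it is more than the statement requires.
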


\begin{Prop} \label{prop:resolventIdentityZ}
	For $q,r \in \bC$ and $x \in \tilde{I}$ and $y \in I$, we have		\begin{align}
		Z^{(q)}(x,y) - Z^{(r)}(x,y) = (q-r) W^{(q)} \otimes Z^{(r)}(x,y)=(q-r) W^{(r)} \otimes Z^{(q)}(x,y). \label{resolventIdentityZ}
	\end{align}
\end{Prop}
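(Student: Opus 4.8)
The plan is to reduce the identity to the resolvent identity for $W$, Proposition \ref{prop:resolventIdentityW}, by a direct computation from the defining formula \eqref{scaleFuncZ}, namely $Z^{(q)}(x,y) = 1 + q\int_{(x,y)} W^{(q)}(x,u)\, m(du)$. If $x \geq y$ then $(x,y) = \emptyset$, so both sides vanish and there is nothing to prove; hence assume $x < y$. I first record that all the quantities below are finite and the $\otimes$-products well defined: by the bound $|W^{(q)}(x,u)| \leq W(x,u)\,\mathrm{e}^{|q|\bar W(x,u)}$ of Proposition \ref{prop:defOfM}, together with the finiteness of $\bar W(x,y)$ and the boundedness of $u \mapsto Z^{(r)}(u,y)$ on $(x,y)$ granted by Proposition \ref{prop:propertiesOfScaleFunc} and Remark \ref{rem:W(x,x)m(x)=0}, one gets $\int_{(x,y)} |W^{(q)}(x,u)|\,|Z^{(r)}(u,y)|\, m(du) < \infty$, so that $W^{(q)}\otimes Z^{(r)}(x,y)$ and $W^{(q)}\otimes W^{(r)}(x,y)$ make sense and Fubini's theorem applies to the iterated integrals appearing below.

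Now substitute $Z^{(r)}(u,y) = 1 + r\int_{(u,y)} W^{(r)}(u,v)\, m(dv)$ into $W^{(q)}\otimes Z^{(r)}(x,y) = \int_{(x,y)} W^{(q)}(x,u)\, Z^{(r)}(u,y)\, m(du)$ and interchange the order of integration in the resulting iterated integral over $\{x < u < v < y\}$ to obtain
\begin{align}
	W^{(q)}\otimes Z^{(r)}(x,y) = \int_{(x,y)} W^{(q)}(x,u)\, m(du) + r\int_{(x,y)} W^{(q)}\otimes W^{(r)}(x,v)\, m(dv).
\end{align}
Multiplying by $q-r$ and using Proposition \ref{prop:resolventIdentityW} in the form $(q-r)\, W^{(q)}\otimes W^{(r)}(x,v) = W^{(q)}(x,v) - W^{(r)}(x,v)$, which holds for all $q,r \in \bC$, the right-hand side becomes
\begin{align}
	&(q-r)\int_{(x,y)} W^{(q)}(x,u)\, m(du) + r\int_{(x,y)}\bigl( W^{(q)}(x,v) - W^{(r)}(x,v)\bigr)\, m(dv) \\
	&\qquad = q\int_{(x,y)} W^{(q)}(x,u)\, m(du) - r\int_{(x,y)} W^{(r)}(x,u)\, m(du),
\end{align}
which by \eqref{scaleFuncZ} equals $Z^{(q)}(x,y) - Z^{(r)}(x,y)$. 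This proves the first equality; the second one, $(q-r)\, W^{(r)}\otimes Z^{(q)}(x,y) = Z^{(q)}(x,y) - Z^{(r)}(x,y)$, is obtained by the identical computation with $q$ and $r$ interchanged, using in addition $W^{(r)}\otimes W^{(q)} = W^{(q)}\otimes W^{(r)}$ from Proposition \ref{prop:resolventIdentityW}.

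I do not expect a substantive obstacle here: the argument is a short manipulation, and the only point needing genuine care is the justification of the Fubini interchange, equivalently the finiteness of the integrals involved, which rests entirely on the a priori exponential bounds of Propositions \ref{prop:defOfM} and \ref{prop:characterizationOfZ} together with the continuity and vanishing statements of Proposition \ref{prop:propertiesOfScaleFunc}. As an alternative one could establish the identity first for $q,r > 0$ by Tonelli, where positivity removes all integrability concerns, and then extend to all complex $q,r$ by analytic continuation in each variable; but the direct route above avoids this.
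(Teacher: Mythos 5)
Your proof is correct: the reduction of \eqref{resolventIdentityZ} to the resolvent identity for $W$ via the defining formula \eqref{scaleFuncZ} and a Fubini interchange over $\{x<u<v<y\}$ is exactly the natural argument, and the integrability needed for Fubini is indeed covered by the exponential bounds of Propositions \ref{prop:defOfM} and \ref{prop:characterizationOfZ}. The paper states this proposition without proof, so there is nothing to compare against, but your computation is the evidently intended one and checks out.
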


\subsection{Quasi-stationary distributions and the Yaglom limit} \label{section:QSDprevious}

Let us recall the results in \cite{noba2023analytic} on the existence of a quasi-stationary distribution of $X$ killed on hitting the left end point of $I$.
For simplicity, we consider the case where $\ell_{1} = 0$, $\ell_{2} = \infty$, and $0 \not \in I$, that is, the state space is $I=(0,\infty)$ or $(0,\infty]$.
Since we can always reduce the state space to this case by an order-preserving homeomorphism, we do not lose any generality.
In this subsection, we assume that $X$ is killed on hitting zero and is not killed from inside; $\zeta = \tau_{0}$.
The main assumption in \cite{noba2023analytic} was the following:
\begin{description}
	\item[(Q)] $\bP_{x}[\tau_{y} < \infty] > 0$ and $\bP_{x}[\tau_{0} < \infty] = 1 \quad (x \in I, \ y \in (0,\infty))$.	 
\end{description}
We say that a distribution $\nu$ on $I$ is a quasi-stationary distribution when
\begin{align}
	\bP_{\nu}[X_{t} \in dy \mid \tau_{0} > t] = \nu(dy) \quad (t \geq 0). \label{}
\end{align}
In \cite{noba2023analytic}, the existence of a quasi-stationary distribution
was completely characterized.
Let us define \textit{the decay parameter}
\begin{align}
	\lambda_{0} := \sup \{ \lambda \geq 0 \mid \bE_{x}[\mathrm{e}^{\lambda \tau_{0}}] < \infty \quad \text{for every $x \in I$ } \}. \label{decayParameter}
\end{align}
Note that under (Q), we always have $\lambda_{0} < \infty$ (see e.g., \cite[Proposition 5.1]{noba2023analytic}).
The boundary classification of $\infty$ was introduced as follows:

\begin{Def} \label{def:boundaryClassification}
	We say that $\infty$ is an \textit{entrance boundary} if
	\begin{align}
		\bar{W}(b,\infty) < \infty \quad \text{for some $b \in (0,\infty)$}, \label{entranceCondition}
	\end{align}
	and a \textit{non-entrance boundary} otherwise.
\end{Def}

\begin{Rem}
	The present definition of the entrance boundary is apparently more general than in \cite{noba2023analytic}, where it was defined as $\bar{W}(0,\infty) < \infty$.
	These definitions are equivalent when $\zeta = \tau_{0}$ and $\bP_{x}[\tau_{0} < \infty] > 0 \ (x \in I)$, particularly under (Q).
	Indeed, when $\infty \in I$, we have $\bar{W}(0,\infty),\ \bar{W}(b,\infty) < \infty$ from Proposition \ref{prop:propertiesOfScaleFunc} (iii).
	When $\infty \not\in I$, by the quasi-left continuity and $\zeta = \tau_{0}$, it follows $\bP_{x}[\lim_{z \to \infty}\tau_{z}^{+} = \infty] = 1$ for every $x \in I$.
	This and Proposition \ref{prop:exitProblem1} imply $0 < \bP_{b}[\tau_{0} < \infty] = \lim_{z \to \infty}\bP_{b}[\tau_{0} < \tau_{z}^{+}] = \lim_{z \to \infty} W(b,z)/W(0,z)$.
	Thus, the integrability of functions $u \mapsto W(0,u),W(b,u)$ coincides.
\end{Rem}

The following characterizes the existence of quasi-stationary distributions.

\begin{Thm}[{\cite[Theorem 5.3]{noba2023analytic}}] \label{thm:existenceOfQSD}
	Suppose (Q).
	Let $\cQ$ be the set of quasi-stationary distributions.
	The following holds:
	\begin{enumerate}
		\item If $\infty$ is entrance, then $\lambda_{0} > 0$ and $\cQ = \{ \nu_{\lambda_{0}} \}$.
		\item If $\infty$ is non-entrance and $\lambda_{0} > 0$, then $\cQ = \{ \nu_{\lambda} \}_{\lambda \in (0,\lambda_{0}]}$.
	\end{enumerate}
	Here 
	\begin{align}
		\nu_{\lambda}(dx) := \lambda W^{(-\lambda)}(0,x)m(dx) \quad (\lambda \in (0,\lambda_{0}]). \label{QSD} 
	\end{align}
\end{Thm}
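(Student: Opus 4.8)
The plan is to translate ``quasi-stationary distribution'' into a spectral problem for the killed resolvent and solve the resulting Volterra equation with the scale function. Recall the elementary reduction: if $\nu\in\cQ$ then $\bP_\nu[\tau_0>t+s]=\bP_\nu[\tau_0>t]\bP_\nu[\tau_0>s]$ by the Markov property and $\bP_\nu[\tau_0<\infty]=1$ under (Q), so there is $\lambda=\lambda(\nu)>0$ with $\bP_\nu[\tau_0>t]=\mathrm e^{-\lambda t}$; being quasi-stationary is then equivalent to $\nu p_t=\mathrm e^{-\lambda t}\nu\ (t\ge0)$, equivalently $(q+\lambda)\,\nu R^{(q)}=\nu\ (q>0)$. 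Since $R^{(q)}(x,\cdot)\ll m$, any $\nu\in\cQ$ has the form $\nu=h\,m$ with $h\ge0$, $\int_I h\,dm=1$, so the task is to classify the pairs $(\lambda,h)$ solving this eigen-equation. First I would make the killed resolvent explicit: letting $z\uparrow\ell_2=\infty$ in Theorem \ref{thm:potentialDensity} with $x=0$ (using $\tau_0^-=\tau_0=\zeta$, $\tau_z^+\uparrow\infty$ a.s.\ by quasi-left-continuity and (Q), and monotone convergence) gives the $q$-potential density of the process killed at $\tau_0$,
\[
r^{(q)}(y,u)=W^{(q)}(0,u)\,\bE_y[\mathrm e^{-q\tau_0}]-W^{(q)}(y,u),\qquad R^{(q)}f(y)=\int_I f(u)\,r^{(q)}(y,u)\,m(du),
\]
the latter by \eqref{potentialDensityFormula}.

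Plugging $\nu=h\,m$ into the eigen-equation and using $W^{(q)}(y,u)=0$ for $y>u$, I would rewrite it, for $m$-a.e.\ $u$ and every $q>0$, as the Volterra equation
\[
h(u)+(q+\lambda)\int_{(0,u)}h(y)W^{(q)}(y,u)\,m(dy)=(q+\lambda)\,c(q)\,W^{(q)}(0,u),\qquad c(q):=\int_I h(y)\,\bE_y[\mathrm e^{-q\tau_0}]\,m(dy).
\]
Inverting the lower-triangular operator $I+(q+\lambda)\cK^{(q)}$, with $\cK^{(q)}g(u)=\int_{(0,u)}g(y)W^{(q)}(y,u)m(dy)$, by its Neumann series, using $(\cK^{(q)})^{n}W^{(q)}(0,\cdot)=(W^{(q)})^{\otimes(n+1)}(0,\cdot)$ and the iterated form of Proposition \ref{prop:resolventIdentityW}, namely $\sum_{n\ge0}(-(q+\lambda))^{n}(W^{(q)})^{\otimes(n+1)}(0,\cdot)=W^{(-\lambda)}(0,\cdot)$ (absolute convergence by the bounds of Proposition \ref{prop:defOfM}), yields $h=c\,W^{(-\lambda)}(0,\cdot)$ with $c:=(q+\lambda)c(q)$ $q$-independent; in particular $W^{(-\lambda)}(0,\cdot)\ge0$ $m$-a.e.\ and $W^{(-\lambda)}(0,\cdot)\in L^1(m)$. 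Conversely, for $h=c\,W^{(-\lambda)}(0,\cdot)$ the eigen-equation holds iff $c(q)=(q+\lambda)^{-1}$ for all $q>0$; writing $\bE_y[\mathrm e^{-q\tau_0}]=1-q\int_0^\infty\mathrm e^{-qt}\bP_y[\tau_0>t]\,dt$, using $\int_I W^{(-\lambda)}(0,u)m(du)=\lambda^{-1}(1-Z^{(-\lambda)}(0,\infty))$ (let $y\uparrow\infty$ in \eqref{scaleFuncZ}), comparing with $(q+\lambda)^{-1}=\int_0^\infty\mathrm e^{-qt}\mathrm e^{-\lambda t}\,dt$, and using $\int_I W^{(-\lambda)}(0,y)\bP_y[\tau_0>t]m(dy)\to0$ as $t\to\infty$ (dominated convergence, (Q)), one sees this forces $Z^{(-\lambda)}(0,\infty)=0$, and then the normalization gives $c=\lambda$, i.e.\ $\nu=\nu_\lambda$. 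Hence $\cQ=\{\nu_\lambda:\lambda\in\Lambda\}$ (all distinct), where $\Lambda=\{\lambda>0:W^{(-\lambda)}(0,\cdot)\ge0\text{ and }Z^{(-\lambda)}(0,\infty)=0\}$; that $\Lambda\subset(0,\lambda_0]$ follows since $\bP_{\nu_\lambda}[\tau_0>t]=\mathrm e^{-\lambda t}$ makes $\bE_x[\mathrm e^{\lambda'\tau_0}]<\infty$ for $\nu_\lambda$-a.e.\ $x$ and all $\lambda'<\lambda$, whence $\lambda'\le\lambda_0$. Finally, in the entrance case $\bar W(0,\infty)<\infty$ (Definition \ref{def:boundaryClassification} and the Remark after it), and iterating $r^{(0)}(y,u)=W(0,u)-W(y,u)\le W(0,u)$ gives $\bE_x[\tau_0^{\,n}]\le n!\,\bar W(0,\infty)^{n}$, so $\bE_x[\mathrm e^{\lambda\tau_0}]<\infty$ for $\lambda<\bar W(0,\infty)^{-1}$ and $\lambda_0>0$.

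The remaining and, I expect, hardest step is to determine $\Lambda$ inside $(0,\lambda_0]$. Integrating the potential density of Theorem \ref{thm:potentialDensity} over $u\in(0,z)$ (using $\zeta=\tau_0$ and Propositions \ref{prop:exitProblem1}, \ref{prop:exitProblemZ}) and letting $z\uparrow\infty$ yields, for $\lambda<\lambda_0$,
\[
\lambda\,\bar W^{(-\lambda)}(y,\infty)=\bE_y[\mathrm e^{\lambda\tau_0}]\bigl(\lambda\bar W^{(-\lambda)}(0,\infty)-1\bigr)+1,\qquad \bar W^{(-\lambda)}(y,\infty):=\int_{(y,\infty)}W^{(-\lambda)}(y,u)m(du).
\]
In the entrance case $W^{(-\lambda)}(y,u)\le W(y,u)$ gives $\bar W^{(-\lambda)}(y,\infty)\le\bar W(y,\infty)\to0$ as $y\uparrow\infty$, so the identity forces $\lambda\bar W^{(-\lambda)}(0,\infty)-1=-1/\lim_{y\uparrow\infty}\bE_y[\mathrm e^{\lambda\tau_0}]$; thus $Z^{(-\lambda)}(0,\infty)=0$ iff $\lim_{y\uparrow\infty}\bE_y[\mathrm e^{\lambda\tau_0}]=\infty$, which, together with the uniform bound $\sup_y\bE_y[\mathrm e^{\lambda\tau_0}]\le(1-\lambda\bar W^{(-\lambda)}(0,\infty))^{-1}$ valid when $\lambda\bar W^{(-\lambda)}(0,\infty)<1$ and with the analyticity of $\lambda\mapsto\bar W^{(-\lambda)}(b,\infty)$ in the entrance case, happens exactly at $\lambda=\lambda_0$; this gives $\Lambda=\{\lambda_0\}$, hence (i). In the non-entrance case $\bar W(0,\infty)=\infty$, the $y\uparrow\infty$ behaviour of $\bar W^{(-\lambda)}(y,\infty)$ is different and the same identity instead yields $\lambda\bar W^{(-\lambda)}(0,\infty)=1$, i.e.\ $Z^{(-\lambda)}(0,\infty)=0$, for every $\lambda\in(0,\lambda_0)$, the boundary case $\lambda=\lambda_0$ following by monotone limits; with $W^{(-\lambda)}(0,\cdot)\ge0$ on $(0,\lambda_0]$ this gives $\Lambda=(0,\lambda_0]$, hence (ii). The genuinely delicate points in this last paragraph --- the boundary value $\lambda=\lambda_0$, the sufficiency that each candidate $\nu_\lambda$ really is quasi-stationary (the converse identity $c(q)=(q+\lambda)^{-1}$), the $y\uparrow\infty$ asymptotics of $\bar W^{(-\lambda)}(y,\infty)$ in the non-entrance case, and the positivity $W^{(-\lambda)}(0,\cdot)\ge0\Leftrightarrow\lambda\le\lambda_0$ --- are where the argument needs real input, supplied either by the analyticity of $\lambda\mapsto\bar W^{(-\lambda)}(b,\infty)$ established in the spectral analysis, or by an $R$-theoretic construction of the minimal quasi-stationary distribution as the $z\uparrow\infty$ limit of the quasi-stationary distributions of the processes killed at $\tau_0\wedge\tau_z^+$.
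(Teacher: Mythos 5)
This theorem is imported verbatim from \cite[Theorem 5.3]{noba2023analytic}; the present paper gives no proof of it, so there is no internal argument to compare yours against. Judged on its own terms, your skeleton is sound as far as it goes: the equivalence of quasi-stationarity with the resolvent eigen-equation $(q+\lambda)\,\nu R^{(q)}=\nu$, the absolute continuity $\nu=h\,m$, the Volterra inversion forcing $h=c\,W^{(-\lambda)}(0,\cdot)$ via the iterated resolvent identity, the identification of the converse condition with $Z^{(-\lambda)}(0)=0$ (equivalently the normalization $\lambda\bar W^{(-\lambda)}(0,\infty)=1$), the inclusion of admissible $\lambda$ in $(0,\lambda_0]$, and the moment bound $\bE_x[\tau_0^n]\le n!\,\bar W(0,\infty)^n$ giving $\lambda_0>0$ in the entrance case are all correct.

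The proof is nonetheless incomplete, and the gaps sit exactly where the content of the theorem lies; you flag them yourself but do not close them. (a) You never establish that $W^{(-\lambda)}(0,\cdot)\ge 0$ $m$-a.e.\ for every $\lambda\in(0,\lambda_0]$ (without which $\nu_\lambda$ is not even a measure), nor that the candidates fail for $\lambda>\lambda_0$. (b) In the entrance case, your identity $Z^{(-\lambda)}(y)=\bE_y[\mathrm e^{\lambda\tau_0}]\,Z^{(-\lambda)}(0)$ combined with $Z^{(-\lambda)}(y)\to 1$ does exclude $Z^{(-\lambda)}(0)=0$ for $\lambda<\lambda_0$, but the vanishing $Z^{(-\lambda_0)}(0)=0$ --- the heart of part (i) --- needs a genuine input you do not supply (e.g.\ Landau--Pringsheim: the abscissa of convergence of the Laplace transform of the positive measure $\bP_y[\tau_0\in dt]$ is a singularity, and since $q\mapsto Z^{(q)}(y)$ and $q\mapsto Z^{(q)}(0)$ are entire, the only possible singularity at $q=-\lambda_0$ is a zero of the denominator). (c) In the non-entrance case, the assertion that ``the $y\uparrow\infty$ behaviour of $\bar W^{(-\lambda)}(y,\infty)$ is different and the same identity instead yields $\lambda\bar W^{(-\lambda)}(0,\infty)=1$ for every $\lambda\in(0,\lambda_0)$'' is not an argument: one must first show $\bar W^{(-\lambda)}(0,\infty)<\infty$ and then derive the normalization by a controlled passage $z\to\infty$ in the two-sided exit formulas (dominating by $\mathrm e^{\lambda\tau_0}$), with a separate treatment of the endpoint $\lambda=\lambda_0$. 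As it stands the proposal is a plausible strategy with the three decisive steps deferred to unproven ``real input'', not a proof.
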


\begin{Rem} \label{rem:positivityOfDensity}
	From \cite[Corollary 5.11]{noba2023analytic}
	The density of the quasi-stationary distribution $\nu_{\lambda_{0}}$ is strictly positive; $W^{(-\lambda_{0})}(0,y) > 0 \ (y \in (0,\infty))$.
\end{Rem}

In the entrance case, the existence of the Yaglom has been shown.
Define
\begin{align}
	Z^{(q)}(x) := \lim_{y \to \infty}Z^{(q)}(x,y) = 1 + q \int_{(x,\infty)}W^{(q)}(x,u)m(du). \label{eq31}
\end{align}
Here we note that from Proposition \ref{prop:characterizationOfZ} and the entrance condition
\begin{align}
	\int_{(x,\infty)}|W^{(q)}(x,u)|m(du) < \infty \quad \text{and} \quad |Z^{(q)}(x)| \leq 1 + q\bar{W}(x,\infty)\mathrm{e}^{|q|\bar{W}(x,\infty)}. \label{barWbound}
\end{align}
Thus, the function $Z^{(q)}$ is well-defined, and it is worth emphasizing that $Z^{(q)}$ is bounded.
It is easy to see that the function $q \mapsto Z^{(q)}(x)$ is entire from Theorem \ref{thm:Zexpansion} and Proposition \ref{prop:characterizationOfZ}.

The function $Z^{(-\lambda_{0})}$ is $\lambda_{0}$-invariant function, which is a key element of the $R$-theory (see \cite{TuominenTweedie} for details).

\begin{Prop}[{\citet[Proposition 5.19]{noba2023analytic}}] \label{prop:invariantFunction}
	Assume the boundary $\infty$ is entrance.
	The following hold:
	\begin{enumerate}
		\item The function $Z^{(-\lambda_{0})}$ is $\lambda_{0}$-invariant, that is, $Z^{(-\lambda_{0})}(x) \in (0,\infty) \ (x \in I)$ and
		\begin{align}
			p_{t}Z^{(-\lambda_{0})}(x) = \mathrm{e}^{-\lambda_{0} t}Z^{(-\lambda_{0})}(x) \quad (t \geq 0, \ x \in I). \label{eq63}
		\end{align}
		\item The function $I \ni x \to Z^{(-\lambda_{0})}(x)$ is strictly increasing and $\lim_{x \to \infty}Z^{(-\lambda_{0})}(x) = 1$.
		\item The function $\bC \ni q \mapsto Z^{(q)}(0)$ has a simple root at $q = -\lambda_{0}$ and we have
		\begin{align}
			\rho := \left. \frac{d}{dq} Z^{(q)}(0) \right|_{q = -\lambda_{0}} = \int_{(0,\infty)}W^{(-\lambda_{0})}(0,u)Z^{(-\lambda_{0})}(u)m(du) \in (0,\infty). \label{}
		\end{align}
	\end{enumerate}
\end{Prop}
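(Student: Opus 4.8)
The plan is to prove, in order, that $Z^{(-\lambda_0)}(0)=0$, then assertion (i), and then the remaining parts of (ii) and (iii) as consequences. First, some preliminary observations. By Theorem~\ref{thm:existenceOfQSD}(i) the measure $\nu_{\lambda_0}(dx)=\lambda_0 W^{(-\lambda_0)}(0,x)m(dx)$ is a probability measure, so $\lambda_0\int_{(0,\infty)}W^{(-\lambda_0)}(0,u)m(du)=1$, i.e. $Z^{(-\lambda_0)}(0)=0$ by \eqref{eq31}; in particular $-\lambda_0$ is a root of $q\mapsto Z^{(q)}(0)$, which disposes of part of (iii). Next I would use the Volterra equation \eqref{IntegralEqW-c} at $q=-\lambda_0$ together with the positivity $W^{(-\lambda_0)}(x,\cdot)\ge 0$ (the case $x=0$ is Remark~\ref{rem:positivityOfDensity}; the general case I would import from \cite{noba2023analytic} or obtain by continuity in $q$ from $q>-\lambda_0$) to get $0\le W^{(-\lambda_0)}(x,u)\le W(x,u)$; since $W(x,u)=1/n_u[\tau_x^-<\infty]\le 1/n_u[\tau_0<\infty]=W(0,u)$ (as $\{\tau_0<\infty\}\subseteq\{\tau_x^-<\infty\}$ in the absence of negative jumps) and $\int_{(0,\infty)}W(0,u)m(du)<\infty$ by the entrance condition, it follows that $\int_{(x,\infty)}W^{(-\lambda_0)}(x,u)m(du)\to 0$ as $x\to\infty$. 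Hence $Z^{(-\lambda_0)}(x)=1-\lambda_0\int_{(x,\infty)}W^{(-\lambda_0)}(x,u)m(du)$ satisfies $0\le Z^{(-\lambda_0)}\le 1$ and $Z^{(-\lambda_0)}(x)\to 1$; this settles the limit in (ii) and, crucially, gives boundedness of $Z^{(-\lambda_0)}$.

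The core of (i) is the identity $R^{(q)}Z^{(-\lambda_0)}(y)=Z^{(-\lambda_0)}(y)/(q+\lambda_0)$ for $q>0$, where $R^{(q)}$ is the resolvent of $X$ (which is killed at $\tau_0=\zeta$): uniqueness of Laplace transforms for bounded functions, together with right-continuity of $t\mapsto p_tZ^{(-\lambda_0)}(y)=\bE_y[Z^{(-\lambda_0)}(X_t)]$ (continuity of $Z^{(-\lambda_0)}$ and right-continuity of $X$), then yields $p_tZ^{(-\lambda_0)}=\mathrm{e}^{-\lambda_0 t}Z^{(-\lambda_0)}$, i.e. \eqref{eq63}. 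Three ingredients, all obtained for $q>0$, go into the computation: (a) the $q$-resolvent density of $X$ with respect to $m$, namely $r^{(q)}(y,u)=W^{(q)}(0,u)\bE_y[\mathrm{e}^{-q\tau_0}]-W^{(q)}(y,u)$, which I would derive from Theorem~\ref{thm:potentialDensity} with $x=0$ and killing at $\tau_0\wedge\tau_z^+$, letting $z\to\infty$ and using $\tau_z^+\uparrow\infty$ $\bP_y$-a.s.\ (quasi-left continuity, as in the remark preceding Theorem~\ref{thm:existenceOfQSD}) and Proposition~\ref{prop:exitProblem1} to identify $W^{(q)}(y,z)/W^{(q)}(0,z)=\bE_y[\mathrm{e}^{-q\tau_0},\tau_0<\tau_z^+]\uparrow\bE_y[\mathrm{e}^{-q\tau_0}]$; (b) the relation $Z^{(q)}(y)=\bE_y[\mathrm{e}^{-q\tau_0}]\,Z^{(q)}(0)$, from Proposition~\ref{prop:exitProblemZ} with $x=0$ on letting $z\to\infty$ (dominated convergence, as $\mathrm{e}^{-q\tau_z^+}\to 0$); and (c) the ``resolvent identity at infinity'' $\int_{(x,\infty)}W^{(q)}(x,u)Z^{(r)}(u)m(du)=(Z^{(q)}(x)-Z^{(r)}(x))/(q-r)$, which follows from Proposition~\ref{prop:resolventIdentityW} by Fubini together with $q\int_{(x,\infty)}W^{(q)}(x,u)m(du)=Z^{(q)}(x)-1$, all integrals being controlled by \eqref{barWbound} and Proposition~\ref{prop:characterizationOfZ} (equivalently, let $y\to\infty$ in Proposition~\ref{prop:resolventIdentityZ}). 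Substituting (a) into $R^{(q)}Z^{(-\lambda_0)}(y)=\int_{(0,\infty)}Z^{(-\lambda_0)}(u)r^{(q)}(y,u)m(du)$, then applying (c) with $r=-\lambda_0$ at $x=0$ and at $x=y$, then $Z^{(-\lambda_0)}(0)=0$, and finally (b), collapses the right-hand side to $Z^{(-\lambda_0)}(y)/(q+\lambda_0)$.

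Granting (i), the rest is short. The process $M_t:=\mathrm{e}^{\lambda_0 t}Z^{(-\lambda_0)}(X_t)$ (with $Z^{(-\lambda_0)}(\partial)=0$) is a nonnegative martingale by \eqref{eq63}. If $Z^{(-\lambda_0)}(y_0)=0$ for some $y_0\in I$, then $\bE_{y_0}[M_t]=0$ forces $M\equiv 0$ $\bP_{y_0}$-a.s., and evaluating at the hitting time $\tau_z$ of an arbitrary $z\in(0,\infty)$ --- which under (Q) is finite with positive $\bP_{y_0}$-probability, with $X_{\tau_z}=z$ --- gives $Z^{(-\lambda_0)}(z)=0$ for every such $z$, contradicting $Z^{(-\lambda_0)}(x)\to 1$; hence $Z^{(-\lambda_0)}>0$ on $I$, which completes (i). For the strict monotonicity in (ii), apply optional stopping to $M$ at $\tau_x$ for $0<x<x'$ (which is $\bP_{x'}$-a.s.\ finite, and satisfies $X_{\tau_x}=x$ by the absence of negative jumps): Fatou gives $Z^{(-\lambda_0)}(x')\ge Z^{(-\lambda_0)}(x)\,\bE_{x'}[\mathrm{e}^{\lambda_0\tau_x}]>Z^{(-\lambda_0)}(x)$, the strictness because $Z^{(-\lambda_0)}(x)>0$, $\lambda_0>0$ and $\tau_x>0$ $\bP_{x'}$-a.s. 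Finally, differentiating (c) at $x=0$ in $r$ at $r=q$ (dominated convergence via the same bounds) gives $\frac{d}{dq}Z^{(q)}(0)=\int_{(0,\infty)}W^{(q)}(0,u)Z^{(q)}(u)m(du)$, hence $\rho=\int_{(0,\infty)}W^{(-\lambda_0)}(0,u)Z^{(-\lambda_0)}(u)m(du)$; this is $\le\int_{(0,\infty)}W^{(-\lambda_0)}(0,u)m(du)=1/\lambda_0<\infty$ (since $Z^{(-\lambda_0)}\le 1$) and $>0$ since the integrand is strictly positive on $(0,\infty)$ (Remark~\ref{rem:positivityOfDensity} and (i)) while $m$ has full support (Proposition~\ref{Prop101}); as $q\mapsto Z^{(q)}(0)$ is entire with $\rho\ne 0$, the root at $-\lambda_0$ is simple.

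The step I expect to be the main obstacle is making the $z\to\infty$ passages in ingredients (a) and (b) rigorous while avoiding the $0\cdot\infty$ indeterminacies that appear if one argues directly at $q=-\lambda_0$: in the entrance case $\bE_y[\mathrm{e}^{\lambda_0\tau_0}]=+\infty$ (indeed $\bP_{\nu_{\lambda_0}}[\tau_0>t]=\mathrm{e}^{-\lambda_0 t}$), so the relation in (b) degenerates there and the ratios $W^{(-\lambda_0)}(y,z)/W^{(-\lambda_0)}(0,z)$ blow up; the remedy is to carry out the entire resolvent computation for $q>0$ and transfer to $q=-\lambda_0$ only through the well-behaved entire functions $Z^{(q)}$ and via Laplace inversion. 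A secondary technical point is the positivity of the two-variable scale function $W^{(-\lambda_0)}(x,\cdot)$ used in the first step, which is the natural extension of Remark~\ref{rem:positivityOfDensity}.
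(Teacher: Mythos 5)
Your overall architecture is sound and most of the individual steps check out: $Z^{(-\lambda_{0})}(0)=0$ from the normalization of $\nu_{\lambda_{0}}$; the identity $R^{(q)}Z^{(-\lambda_{0})}(y)=Z^{(-\lambda_{0})}(y)/(q+\lambda_{0})$ for $q>0$ via your ingredients (a)--(c) (this is precisely the computation that appears in the proof of Theorem~\ref{thm:SpectrumIsZeros2}); the Laplace-inversion step giving \eqref{eq63}; the limit $Z^{(-\lambda_{0})}(x)\to 1$; and the derivative formula for $\rho$ obtained by letting $r\to q$ in (c). The import of $W^{(-\lambda_{0})}(x,\cdot)\ge 0$ for general $x$ is legitimate (the paper itself uses $W^{(-\lambda_{0})}\le W$ in Proposition~\ref{prop:boundednessOfResolvent}), and your decision to run the whole resolvent computation at $q>0$ and transfer to $q=-\lambda_{0}$ only through the entire functions $Z^{(q)}$ is exactly the right way to dodge the $0\cdot\infty$ degeneracies you identify.

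There is, however, one load-bearing step that is asserted but not proved: the lower bound $Z^{(-\lambda_{0})}\ge 0$. From $0\le W^{(-\lambda_{0})}(x,u)$ and $Z^{(-\lambda_{0})}(x)=1-\lambda_{0}\int_{(x,\infty)}W^{(-\lambda_{0})}(x,u)m(du)$ you only obtain $Z^{(-\lambda_{0})}\le 1$; the claimed ``$0\le$'' does not follow, since $\lambda_{0}\int_{(x,\infty)}W^{(-\lambda_{0})}(x,u)m(du)$ equals exactly $1$ at $x=0$ and your only control on it for small $x$ is via $\lambda_{0}\bar{W}(0,\infty)\ge 1$. This nonnegativity is then used three times in an essential way: to conclude that $M$ is a \emph{nonnegative} martingale (so that $\bE_{y_{0}}[M_{t}]=0$ forces $M\equiv 0$, giving $Z^{(-\lambda_{0})}>0$), in the Fatou/optional-stopping step for strict monotonicity, and hence for $\rho>0$ in (iii). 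The gap is fixable inside your own framework: your ingredient (b) extends by analyticity to $\Re q>-\lambda_{0}$, giving $Z^{(q)}(y)=\bE_{y}[\mathrm{e}^{-q\tau_{0}}]\,Z^{(q)}(0)$ for $q\in(-\lambda_{0},0]$; if $Z^{(-\lambda)}(0)=0$ for some $\lambda\in(0,\lambda_{0})$ this identity would force $Z^{(-\lambda)}\equiv 0$, contradicting $Z^{(-\lambda)}(y)\to 1$ as $y\to\infty$ (which holds by the same decay estimate you use at $-\lambda_{0}$), so $Z^{(-\lambda)}(0)>0$ on $[0,\lambda_{0})$ by continuity from $Z^{(0)}(0)=1$, whence $Z^{(-\lambda)}(y)>0$ there and $Z^{(-\lambda_{0})}(y)=\lim_{\lambda\uparrow\lambda_{0}}Z^{(-\lambda)}(y)\ge 0$. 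With that inserted, the remainder of your argument goes through.
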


We have a kind of weak ergodicity.

\begin{Thm}[{\citet[Theorem 5.20]{noba2023analytic}}] \label{thm:meanYaglomLimit}
	Assume the boundary $\infty$ is entrance.
	For a measurable function $f: I \to \bR$ such that $\int_{(0,\infty)}W^{(-\lambda_{0})}(0,u) |f(u)|m(du) < \infty$, we have
	\begin{align}
			\lim_{t \to \infty} \frac{1}{t}\int_{0}^{t}\mathrm{e}^{\lambda_{0} s} p_{s}f(x)ds 
			= \frac{Z^{(-\lambda_{0})}(x)}{\rho} \int_{(0,\infty)} W^{(-\lambda_{0})}(0,u)f(u)m(du). 
		\label{meanYaglom}
	\end{align}
\end{Thm}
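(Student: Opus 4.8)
The plan is to derive \eqref{meanYaglom} from the $R$-theory ergodic theorem for general Markov processes (as invoked in \cite{TuominenTweedie}), using the spectral data assembled in Proposition \ref{prop:invariantFunction}. The natural framework is the $h$-transform by the $\lambda_0$-invariant function $h = Z^{(-\lambda_0)}$: set $\widehat p_t f(x) := \mathrm{e}^{\lambda_0 t}\, h(x)^{-1}\, p_t(hf)(x)$, which by \eqref{eq63} is a (sub-)Markov semigroup, and whose invariant measure ought to be $\mu(dx) := h(x)\,\nu_{\lambda_0}(dx) = \lambda_0\, W^{(-\lambda_0)}(0,x)\, Z^{(-\lambda_0)}(x)\, m(dx)$, a \emph{finite} measure of total mass $\lambda_0 \rho$ by Proposition \ref{prop:invariantFunction}(iii). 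One then needs to identify $\lambda_0$ as the decay parameter of the killed process in the $R$-theoretic sense, check $\lambda_0$-recurrence, and verify that $(h, \nu_{\lambda_0})$ is the pair of right/left $\lambda_0$-invariant function and measure; these are exactly the ingredients already established in \cite[Section 5]{noba2023analytic} leading up to Theorem \ref{thm:YaglomLimit}. Given these, the Cesàro form of the $R$-theorem yields, for suitable $g$,
\begin{align}
	\lim_{t \to \infty}\frac{1}{t}\int_0^t \widehat p_s g(x)\, ds = \frac{1}{\mu(I)}\int_I g\, d\mu. \label{}
\end{align}

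Next I would translate this back. Writing $g := f/h$ (so $hg = f$) and using $\widehat p_s g(x) = \mathrm{e}^{\lambda_0 s} h(x)^{-1} p_s f(x)$, the left side becomes $h(x)^{-1}\,\frac{1}{t}\int_0^t \mathrm{e}^{\lambda_0 s} p_s f(x)\, ds$, while the right side is
\begin{align}
	\frac{1}{\lambda_0 \rho}\int_I \frac{f(u)}{h(u)}\,h(u)\,\lambda_0 W^{(-\lambda_0)}(0,u)\, m(du) = \frac{1}{\rho}\int_{(0,\infty)} W^{(-\lambda_0)}(0,u)\, f(u)\, m(du). \label{}
\end{align}
Multiplying through by $h(x) = Z^{(-\lambda_0)}(x)$ gives precisely \eqref{meanYaglom}. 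The integrability hypothesis $\int_{(0,\infty)} W^{(-\lambda_0)}(0,u)\,|f(u)|\, m(du) < \infty$ is exactly $\int_I |g|\, d\mu < \infty$, i.e.\ $g \in L^1(\mu)$, which is the hypothesis needed to apply the ergodic theorem to the possibly unbounded and signed function $g$ (one decomposes $g = g^+ - g^-$ and handles each part, noting $p_s$ and $\widehat p_s$ are positive operators). The positivity and finiteness of $h$ on $I$ from Proposition \ref{prop:invariantFunction}(i) make the back-and-forth division by $h$ legitimate.

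The main obstacle is bridging the \emph{analytic} objects of this paper (the scale function $W^{(-\lambda_0)}$, the function $Z^{(-\lambda_0)}$, the explicit measure $\nu_{\lambda_0}$) with the \emph{abstract} $R$-theory of \cite{TuominenTweedie}: one must confirm that the decay parameter $\lambda_0$ of \eqref{decayParameter} coincides with the $R$-theoretic decay parameter of the killed semigroup, that this semigroup is $\lambda_0$-recurrent (not merely $\lambda_0$-transient), and that the left $\lambda_0$-invariant measure is indeed $\nu_{\lambda_0}$ — equivalently that $\mu$ above is an invariant probability (up to normalization) for $\widehat p_t$. All of this is contained in or immediately adjacent to the proof of \cite[Theorem 5.20]{noba2023analytic} (equivalently Theorem \ref{thm:YaglomLimit}), so in practice the argument is a short deduction; the only genuine care needed is the $L^1(\mu)$ truncation argument to pass from bounded $f$ to the stated integrability class, and verifying measurability/regularity conditions (irreducibility via (Q), the reference measure (A3)) that make the $R$-theorem applicable.
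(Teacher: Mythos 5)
This theorem is quoted from \citet[Theorem 5.20]{noba2023analytic} and the present paper supplies no proof of its own, so there is nothing internal to compare against; your $h$-transform/$R$-theory derivation is exactly the route that source takes (and that the paper itself attributes to \cite{TuominenTweedie} in the discussion surrounding Theorem \ref{thm:YaglomLimit}). The bookkeeping is correct: $\mu = Z^{(-\lambda_0)}\,d\nu_{\lambda_0}$ is indeed invariant for $\widehat p_t$ with total mass $\lambda_0\rho$ by Proposition \ref{prop:invariantFunction}(iii), and the normalization reproduces \eqref{meanYaglom} exactly.
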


With a continuity condition of the transition semigroup, we can strengthen the convergence \eqref{meanYaglom}, and show the existence of the Yaglom limit.
Let $\cB_{b}(I)$ be the set of bounded Borel measurable function on $I$ and denote $||f||_{\infty} := \sup_{x \in I}|f(x)|$.
The following was obtained by applying the $R$-theory (see \cite{TuominenTweedie}).

\begin{Thm}[{\citet[Theorem 5.24]{noba2023analytic}}] \label{thm:YaglomLimit}
	Assume the boundary $\infty$ is entrance and the following continuity holds: 
	\begin{description}
		\item[(C)] For every $A \in \cB(I)$ and $x \in I$, the function $(0,\infty) \ni t \longmapsto p_{t}1_{A}(x)$ is continuous. 
	\end{description}
	Then the following holds: for every $x \in I$
	\begin{align}
		\lim_{t \to \infty} \sup_{f \in \cB_{b}(I),||f||_{\infty} \leq 1} \left|\mathrm{e}^{\lambda_{0}t}p_{t}f(x) - \frac{Z^{(-\lambda_{0})}(x)}{\lambda_{0} \rho} \nu_{\lambda_{0}}(f)\right| = 0. \label{eq40}
	\end{align}
	In particular, the quasi-stationary distribution $\nu_{\lambda_{0}}$ is the Yaglom limit under the total variation distance, that is, for every $x \in I$,
	\begin{align}
		\lim_{t \to \infty} \sup_{f \in \cB_{b}(I),||f||_{\infty} \leq 1}\left|\bE_{x}[f(X_{t}) \mid \tau_{0} > t] - \nu_{\lambda_{0}}(f)\right| = 0. \label{eq68}
	\end{align}
\end{Thm}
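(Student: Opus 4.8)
The plan is to derive the statement from the $R$-theory of general Markov processes developed in \cite{TuominenTweedie}, using the scale-function results recorded above to verify its hypotheses for the process $X$ killed at $0$. The exponential decay rate in that theory will be the decay parameter $\lambda_{0}$; passing to a discrete skeleton $(X_{n\delta})_{n\ge0}$, the sub-Markovian kernel $p_{\delta}$ has convergence parameter $\mathrm{e}^{\lambda_{0}\delta}$. First I would record the structural properties needed to enter the theory: by (A1)--(A3) and Proposition \ref{Prop101} the skeleton is $\psi$-irreducible with maximal irreducibility measure equivalent to the reference measure $m$, and the potential density formula (Theorem \ref{thm:potentialDensity}) exhibits an explicit density of the killed resolvent in terms of $W^{(q)}$, so that the skeleton is a $T$-process; moreover assumption (C) makes the skeleton aperiodic and, crucially, allows one to transfer limit statements from the skeleton back to $(p_{t})_{t\ge0}$ with a rate independent of the mesh $\delta$.

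Next I would identify the $\lambda_{0}$-invariant data and certify $\lambda_{0}$-positive recurrence. The measure $\nu_{\lambda_{0}}(dx)=\lambda_{0}W^{(-\lambda_{0})}(0,x)m(dx)$ is $\lambda_{0}$-invariant since it is a quasi-stationary distribution (Theorem \ref{thm:existenceOfQSD}(i)), and $h:=Z^{(-\lambda_{0})}$ is a $\lambda_{0}$-invariant function that is strictly positive and bounded (Proposition \ref{prop:invariantFunction}(i)--(ii) together with \eqref{barWbound}); furthermore $\nu_{\lambda_{0}}(h)=\lambda_{0}\rho\in(0,\infty)$ by Proposition \ref{prop:invariantFunction}(iii). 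The existence of such a pair with $\nu_{\lambda_{0}}(h)<\infty$ is exactly the $\lambda_{0}$-positivity condition and excludes $\lambda_{0}$-transience and $\lambda_{0}$-null recurrence; alternatively, the Ces\`aro convergence with nonzero limit in Theorem \ref{thm:meanYaglomLimit} already yields $\lambda_{0}$-positive recurrence directly.

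With these inputs the ergodic theorem of $R$-theory, applied to the aperiodic $\lambda_{0}$-positive skeleton and then lifted to continuous time via (C), gives for each $x\in I$
\[
	\lim_{t\to\infty}\ \norm{\mathrm{e}^{\lambda_{0}t}p_{t}(x,\cdot)-\frac{h(x)}{\nu_{\lambda_{0}}(h)}\,\nu_{\lambda_{0}}}_{\mathrm{TV}}=0,
\]
which, after substituting $h=Z^{(-\lambda_{0})}$ and $\nu_{\lambda_{0}}(h)=\lambda_{0}\rho$ and writing the total variation norm as the supremum over $\norm{f}_{\infty}\le1$, is precisely \eqref{eq40}. Finally \eqref{eq68} follows from \eqref{eq40}: writing $\bE_{x}[f(X_{t})\mid\tau_{0}>t]=\mathrm{e}^{\lambda_{0}t}p_{t}f(x)\big/\mathrm{e}^{\lambda_{0}t}p_{t}1(x)$, the numerator converges uniformly over $\norm{f}_{\infty}\le1$ to $\tfrac{Z^{(-\lambda_{0})}(x)}{\lambda_{0}\rho}\nu_{\lambda_{0}}(f)$ and the denominator to the strictly positive constant $\tfrac{Z^{(-\lambda_{0})}(x)}{\lambda_{0}\rho}$, so the quotient converges uniformly to $\nu_{\lambda_{0}}(f)$.

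I expect the main obstacle to be matching the abstract dichotomies of $R$-theory---stated for $\psi$-irreducible chains on general state spaces---with the analytically defined objects $W^{(-\lambda_{0})}$ and $Z^{(-\lambda_{0})}$: one must verify that $\mathrm{e}^{\lambda_{0}\delta}$ is genuinely the convergence parameter of $p_{\delta}$ and that the process is $\lambda_{0}$-\emph{positive} recurrent, for which the potential density formula of Theorem \ref{thm:potentialDensity} and the boundedness of $Z^{(-\lambda_{0})}$ coming from the entrance condition \eqref{barWbound} are the decisive tools. A secondary point is the skeleton-to-continuous-time transfer, where aperiodicity and (C) are used to ensure the limit does not depend on the chosen mesh.
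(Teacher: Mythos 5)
Your proposal is correct and follows essentially the same route as the source: the paper explicitly attributes this theorem to an application of the $R$-theory of Tuominen--Tweedie, with $\nu_{\lambda_{0}}$ and $Z^{(-\lambda_{0})}$ serving as the $\lambda_{0}$-invariant measure and function, $\nu_{\lambda_{0}}(Z^{(-\lambda_{0})})=\lambda_{0}\rho$ certifying $\lambda_{0}$-positivity, and (C) handling the skeleton-to-continuous-time transfer. The final deduction of \eqref{eq68} from \eqref{eq40} via the quotient argument is also the standard one.
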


We will show in Theorem \ref{thm:rateOfCovergence}, under the stronger assumption (SF) than (C), the convergence \eqref{eq68} is exponentially fast.

\section{The instantaneous entrance condition and the Feller property} \label{section:entranceBoundaryAndFellerProperty}

In this section, we recall and generalize results in \cite{entranceBdry}, which showed that under the instantaneous entrance condition and a regularity on the transition semigroup, the process can be made into a Feller process by extending the state space.

Throughout this section, we suppose that $X=(\Omega, \cF, \cF_t, X_t, \theta_t, \bP_x)$ is a standard process with no negative jump.
We denote the lifetime by $\zeta$.
Since we do not need the scale function in this section, we do not assume conditions (A1)-(A3) except for Proposition \ref{prop:twoEntranceConditions}.
Since we consider the situation where $\ell_{2}$ is inaccessible, we may assume $I = [0,\infty)$ or $(0,\infty)$.

We introduce a function space suitable for our purpose:
\begin{align}
	C := \{ f: I \to \bR \mid f \text{ is bounded continuous, and when $0 \not\in I$, $\lim_{x \to 0}f(x) = 0$} \}
\end{align}
We equip this space with the supremum norm.
We say that the boundary $\infty$ is instantaneous entrance when
\begin{align}
	\lim_{b \to \infty}\lim_{x \to \infty}\bP_{x}[\tau_{b}^{-} \leq t] = 1 \quad \text{for every $t > 0$}. \label{instantaneousEntrance}
\end{align}
Note that the limits in \eqref{instantaneousEntrance} always exists by monotonicity.
Also, note that \eqref{instantaneousEntrance} implies
\begin{align}
	\lim_{b \to \infty}\lim_{x \to \infty} \bP_{x}[\tau_{b}^{-} < \zeta] = 1. \label{RareKillingAroundInfinity}
\end{align}

We note elementary consequences of \eqref{instantaneousEntrance}.

\begin{Prop} \label{prop:entranceEquivalence}
	The following hold:
	\begin{enumerate}
		\item The distributions $(\bP_{x}[\tau_{b}^{-} \in dt])_{x > 0}$ on $[0,\infty]$ weakly converges to a distribution $\mu_{b}$ on $[0,\infty]$ as $x \to \infty$, that is, for every bounded continuous function $f$ on $[0,\infty]$ we have $\lim_{x \to \infty}\bE_{x}[f(\tau_{b}^{-})] = \int_{[0,\infty]}f(t)\mu_{b}(dt)$.
		In addition, the distributions $(\mu_{b})_{b > 0}$ on $[0,\infty]$ weakly converges to the Dirac mass at $0$ as $b \to \infty$. 
		\label{prop:entranceEquivalence(i)}
		\item For every $t \in (0,\infty)$, we have 
		\[
		\lim_{b \to \infty}\lim_{x \to \infty} \bP_{x}[\tau_{b}^{-} \leq t] \in \{0,1\}.
		\]
		\label{prop:entranceEquivalence(ii)}
		\item The condition \eqref{instantaneousEntrance} is equivalent to
		\[
		\lim_{b \to \infty}\lim_{x \to \infty}\bE_{x}[\mathrm{e}^{-q\tau_{b}^{-}}] = 1 \quad \text{for every (or some) $q > 0$}.
		\]\label{prop:entranceEquivalence(iii)}
	\end{enumerate}
\end{Prop}

\begin{proof}
	Part (i) readily follows from monotonicity and the relative compactness of probability distributions on $[0,\infty]$ since \eqref{instantaneousEntrance} identifies the limit.

	Let $b < c < x$ and let $q > 0$.
	By the strong Markov property, we have $\bE_{x}[\mathrm{e}^{-q\tau_{b}^{-}}] = \bE_{x}[\mathrm{e}^{-q\tau_{c}^{-}}]\bE_{c}[\mathrm{e}^{-q\tau_{b}^{-}}]$. 
	Taking the limits as $x \to \infty$, $c \to \infty$, and $b \to \infty$ in this order, we have $a = a^{2}$ for $a := \lim_{c \to \infty}\lim_{x \to \infty}\bE_{x}[\mathrm{e}^{-q\tau_{c}^{-}}] \in [0,1]$.
	Thus, $a = 0$ or $1$.
	By the dominated converge theorem and the identity
	\[
	\bE_{x}[\mathrm{e}^{-q\tau_{b}^{-}}] = q\int_{0}^{\infty}\mathrm{e}^{-qt} \bP_{x}[\tau_{b}^{-} \leq t] dt,
	\]
	it follows
	\begin{align}
		a = q\int_{0}^{\infty}\mathrm{e}^{-qt}\lim_{b \to \infty}\lim_{x \to \infty}\bP_{x}[\tau_{b}^{-} \leq t]dt.
	\end{align}
	Parts \eqref{prop:entranceEquivalence(ii)} and \eqref{prop:entranceEquivalence(iii)} are now obvious.	
\end{proof}

In \cite{entranceBdry}, they established for conservative strong Markov processes with condition \eqref{instantaneousEntrance} and a regularity of the transition semigroup, that the state space can be extended to $[0,\infty]$, and the transition semigroup has the Feller property on it.

\begin{Thm}[{\citet[Theorem 2.2]{entranceBdry}}] \label{thm:entranceSufficient1}
	Let $I = [0,\infty)$ and $X$ is conservative.
	Suppose \eqref{instantaneousEntrance} and the following hold:
	\begin{enumerate}
		\item For every $f \in C$, $p_{t}f(x) \in C$.
		\item For every $f \in C$ and $x \in I$, $\lim_{t \to 0}p_{t}f(x) = f(x)$.
	\end{enumerate}
	Then there exist a Feller semigroup $(\bar{p}_{t})_{t \geq 0}$ on $C([0,\infty])$ such that $\bar{p}_{t}f(x) = p_{t}f(x) \ (f \in C([0,\infty]),\  x \in I)$.
	Moreover, any associated Hunt process $Y=(\Omega', \cF', \cF'_{t}, Y_{t}, \theta'_{t}, \bP'_{x})$ on $[0,\infty]$ is conservative and instantaneously entrance to $I$ from $\infty$:
	\begin{align}
		\bP'_{\infty}[Y_{t} \in I \ \text{for every $t > 0$}] = 1. \label{neverReturn2}
	\end{align}
\end{Thm}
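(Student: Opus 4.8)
The plan is to construct the extended semigroup directly on $C([0,\infty])$ and then invoke the Hille–Yosida–Ray theorem to produce the Feller process, postponing the ``never return'' statement \eqref{neverReturn2} to a separate probabilistic argument. The first step is to define, for $f \in C([0,\infty])$, a candidate value $\bar p_t f(\infty)$. The natural guess is
\begin{align}
	\bar p_t f(\infty) := \lim_{x \to \infty} p_t f(x),
\end{align}
and the main analytic task is to show this limit exists for every $f \in C([0,\infty])$ and every $t>0$. For this I would first treat $f \in C$ (so $f$ vanishing at $\infty$ in the sense that it is just bounded continuous on $[0,\infty)$, extended by an arbitrary value at $\infty$ — more precisely I would reduce to $f \in C$ by splitting off a constant), and then use the instantaneous entrance condition in the form Proposition \ref{prop:entranceEquivalence}(ii): $\sup_{x>b}\bE_x\tau_b^- < \infty$ for large $b$. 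By the strong Markov property at $\tau_b^-$ together with the absence of negative jumps (so $X_{\tau_b^-} = b$ on $\{\tau_b^- < \infty\}$, and $\tau_b^- < \infty$ a.s.\ here since $\lim_{z\to\infty}\tau_z^+ = \infty$ and the entrance condition forces $\tau_b^-$ finite), one writes
\begin{align}
	p_t f(x) = \bE_x\!\left[ p_{t-\tau_b^-}f(b);\ \tau_b^- \le t \right] + \bE_x\!\left[ f(X_t);\ \tau_b^- > t \right].
\end{align}
The second term is bounded by $\|f\|_\infty\,\bP_x[\tau_b^- > t] \le \|f\|_\infty \bE_x\tau_b^-/t$, which is small uniformly in $x > b$ once $b$ is large. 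For the first term, using hypothesis (ii) (continuity of $t \mapsto p_t f(b)$, which follows from hypotheses (i)–(ii) by a standard semigroup argument, or directly from (ii) applied at the point $b$ via (i)) and right-continuity of $t\mapsto\tau_b^-$ one shows it is asymptotically, as $x \to \infty$, close to $\bE[p_{t-\sigma}f(b)]$ where $\sigma$ is the a.s.\ limit of $\tau_b^-$ as $x\to\infty$ under a suitable coupling — or more robustly, one shows the family $\{p_t f(x)\}_{x}$ is Cauchy as $x\to\infty$ by comparing $p_t f(x)$ and $p_t f(x')$ via the above decomposition at the same level $b$ and letting $b\to\infty$. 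This Cauchy-in-$x$ route avoids having to identify the limit explicitly and is the cleanest way to get existence.

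Having defined $\bar p_t$ on $C([0,\infty])$ via $\bar p_t f|_I = p_t f$ and $\bar p_t f(\infty)$ as above, the next steps are routine verifications: (a) $\bar p_t f$ is continuous on $[0,\infty]$ — continuity on $I$ is hypothesis (i), and continuity at $\infty$ is exactly the statement that $p_t f(x) \to \bar p_t f(\infty)$, which is what we just proved, with the convergence being locally uniform in $t$ by the uniform-in-$x$ control above; (b) $\bar p_t$ is a positive contraction preserving $\mathbf 1$ (conservativeness of $X$ gives $p_t\mathbf 1 = \mathbf 1$, hence $\bar p_t\mathbf 1 = \mathbf 1$); (c) the semigroup property $\bar p_s \bar p_t = \bar p_{s+t}$ — on $I$ this is inherited from $p$, and at $\infty$ it follows by taking $x\to\infty$ in $p_{s+t}f(x) = p_s(p_t f)(x)$, using $p_t f \in C([0,\infty])$ and dominated convergence together with the uniform control; (d) strong continuity $\bar p_t f \to f$ as $t\to 0$ in sup norm — on $I$ pointwise from hypothesis (ii), upgraded to uniform on $[0,\infty]$ by the Feller property and a standard argument (pointwise convergence of a family of equicontinuous-in-the-limit functions on a compact space), and at $\infty$ from the uniform-in-$x$ estimate letting $t \to 0$. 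Then $(\bar p_t)$ is a Feller semigroup on the compact space $[0,\infty]$, and by the Hille–Yosida / Ray–Knight construction there is an associated Hunt process $Y$ on $[0,\infty]$ with $\bar p_t f(x) = \bE'_x[f(Y_t)]$.

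The last step is \eqref{neverReturn2}. Here I would argue: for $b>0$ and $t>0$, $\bP'_\infty[\tau^{-,Y}_b \le t] = \lim_{x\to\infty}\bP'_x[\tau^{-,Y}_b \le t]$ — this uses the Feller property of $Y$ (so hitting-time probabilities of the closed set $[0,b]$ are lower semicontinuous, combined with the fact that for $x\in I$, $Y$ started at $x$ agrees in law with $X$ started at $x$ up to $\tau_b^-$) and the instantaneous entrance condition \eqref{instantaneousEntrance}, which gives $\lim_{b\to\infty}\lim_{x\to\infty}\bP'_x[\tau^{-,Y}_b\le t] = 1$, hence $\lim_{b\to\infty}\bP'_\infty[\tau^{-,Y}_b\le t] = 1$. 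Thus $\bP'_\infty$-a.s.\ $Y$ hits $I$ immediately, i.e.\ $\bP'_\infty[Y_t\in I\text{ for some }t\in(0,\varepsilon]]=1$ for all $\varepsilon>0$; combined with the fact that $\infty$ cannot be reached from $I$ in finite time (since $\lim_{z\to\infty}\tau_z^+=\infty$ for $X$, transferred to $Y$ on $I$) and the strong Markov property of $Y$, one concludes $\infty$ is instantaneously left and never returned to, giving \eqref{neverReturn2}. Conservativeness of $Y$ is immediate from $\bar p_t\mathbf 1 = \mathbf 1$.

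\textbf{Main obstacle.} The crux is the existence of $\lim_{x\to\infty} p_t f(x)$ and the uniform-in-$t$ (on compacts) control of the rate of convergence; everything downstream is formal once that is in hand. The delicate point inside that step is handling the term $\bE_x[p_{t-\tau_b^-}f(b);\tau_b^-\le t]$ — one must pass from ``$\tau_b^-$ small with high probability for large $x$'' to ``$p_{t-\tau_b^-}f(b)$ close to $p_t f(b)$'', which requires the continuity of $s\mapsto p_s f(b)$ near $s=t$ uniformly enough, and this is where hypotheses (i)–(ii) (which together yield right-continuity, hence by the semigroup property continuity, of $s \mapsto p_s f(b)$ on $(0,\infty)$) are essential. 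The Cauchy-in-$x$ formulation sidesteps the need to identify the limit but still needs this continuity.
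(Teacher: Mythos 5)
Your proposal is essentially sound, but note first that the paper does not prove this statement itself: it is imported from \citet[Theorem 2.2]{entranceBdry}, and what the paper actually proves is the resolvent analogue, Theorem \ref{thm:entranceSufficient2}, together with Proposition \ref{prop:limitAtInfinity}, which carries out precisely the semigroup-level decomposition you propose (namely $p_tf(x)=\bE_x[f(X_t),\tau_b^->t]+\bE_x[g(b,\tau_b^-)]-f(b)\bP_x[\tau_b^->t]$ with $g(b,u)=p_{(t-u)\vee 0}f(b)$, combined with Lemma \ref{lem:convergenceOfHittingDistribution} on the weak convergence of the laws of $\tau_b^-$). So your route is that of the cited source rather than of this paper. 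The trade-off is instructive: working at the semigroup level forces you to control $\bE_x[p_{t-\tau_b^-}f(b);\tau_b^-\le t]$ and hence to know that $s\mapsto p_sf(b)$ is continuous, whereas the paper's resolvent version integrates out the time variable, so this issue disappears and only $R^{(q)}f\in C$ and pointwise convergence $qR^{(q)}f\to f$ are needed --- which is exactly why the paper can weaken hypotheses (i)--(ii) to their resolvent counterparts.

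Two points in your sketch need more care than you give them. First, continuity of $s\mapsto p_sf(b)$ on $(0,\infty)$ does not follow from (i)--(ii) by ``a standard semigroup argument'' alone: those hypotheses give right-continuity (write $p_{s+h}f(b)=p_h(p_sf)(b)$ and use (i) then (ii)), but left-continuity requires either strong continuity, which you have not yet established at that stage, or --- as the paper does in Proposition \ref{prop:limitAtInfinity} --- the right-continuity and quasi-left-continuity of the paths of the standard process (so that $X_{s-h}\to X_s$ almost surely along deterministic times and dominated convergence applies). Second, in the final step the identity $\bP'_\infty[\tau_b^{-,Y}\le t]=\lim_{x\to\infty}\bP'_x[\tau_b^{-,Y}\le t]$ is not automatic from the Feller property of $Y$, since hitting probabilities of closed sets are only semicontinuous in the starting point; this can be repaired by sandwiching $[0,b]$ between open sets, but the paper's proof of the analogous statement \eqref{neverReturn} sidesteps the issue by evaluating $\int_0^\infty \mathrm{e}^{-qt}\bar p_t 1_{\{\infty\}}(\infty)\,dt$ as a limit of resolvents of continuous bump functions $f_b$ and then upgrading from almost every $t$ to all $t$ via the semigroup property. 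Neither point is a fatal gap, but both ultimately lean on the path regularity of the standard process rather than on hypotheses (i)--(ii) alone.
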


\begin{Rem}
	Since we are assuming that the process $X$ is a Hunt process, condition (ii) is trivially satisfied by the right-continuity of paths.
	The same is true for condition (ii) in Theorem \ref{thm:entranceSufficient2} below.
	We include condition (ii) in the statement just for clarity and consistency with the previous study \cite{entranceBdry}, where the process was only assumed to be a strong Markov process with no negative jumps, making condition (ii) essential.
	However, in \cite{entranceBdry}, since the strong Markov property at time $\tau_{b}^{-}$ was used, some regularity of paths are implicitly assumed.
	In addition, defining the  ``absence of negative jumps'' as a measurable set also appears to require a certain kind of path regularity and completion of the $\sigma$-field.
\end{Rem}

In Theorem \ref{thm:entranceSufficient1}, two regularity assumptions on the transition semigroup are imposed.
We relax them with the corresponding ones on the resolvent.
In addition, we extend Theorem \ref{thm:entranceSufficient1} to non-conservative cases.
This generalization seems beneficial since it is not rare that we only know the regularity of the resolvent.
As usual, for a topological space $S$, we denote by $C_{\infty}(S)$ the space of continuous functions vanishing at infinity.
Let $I' := I \cup \{\infty\}$ be the extended interval endowed with the usual topology.
Note that $I' = (0,\infty]$ or $[0,\infty]$.
We add to $I'$ a point $\partial'$ as an isolated point. 

\begin{Thm} \label{thm:entranceSufficient2}
	Suppose \eqref{instantaneousEntrance} and the following hold:
	\begin{enumerate}
		\item For every $f \in C$ and $q > 0$, $R^{(q)}f \in C$.
		\item For every $f \in C$ and $x \in I$, $\lim_{q \to \infty} qR^{(q)}f(x) =f(x)$.
	\end{enumerate}
	Then there exist a Feller semigroup $(\bar{p}_{t})_{t \geq 0}$ on $C_{\infty}(I')$ such that $\bar{p}_{t}f(x) = \bE_{x}[f(X_{t})] \ (f \in C_{\infty}(I'),\  x \in I)$.
	Moreover, any associated Hunt process $Y=(\Omega', \cF', \cF'_{t}, Y_t, \theta'_{t}, \bP'_{x})$ on $I' \cup \{\partial'\}$ with the cemetery $\partial'$ is instantaneously entrance to $I$ from $\infty$:
	\begin{align}
		\bP'_{\infty}[Y_{t} \in I \cup \{ \partial' \} \ \text{for every $t > 0$}] = 1. \label{neverReturn}
	\end{align}
\end{Thm}

\begin{proof}
	Let $f:I \to \bR$ be bounded measurable.
	We first show $\lim_{x \to \infty}R^{(q)}f(x)$ exists for $q > 0$.
	Fix $\eps > 0$.
	From Proposition \ref{prop:entranceEquivalence} \eqref{prop:entranceEquivalence(iii)}, we can take $b > 0$ so that $\lim_{x \to \infty}\bE_{x}[\mathrm{e}^{-q \tau_{b}^{-}}] > 1 - \eps$.
	For $x \in I_{>b}$, we have from the strong Markov property at $\tau_{b}^{-}$
	\begin{align}
		R^{(q)}f(x) = \int_{0}^{\infty}\mathrm{e}^{-qt}\bE_{x}[f(X_{t}),\tau_{b}^{-} > t]dt + \bE_{x}[\mathrm{e}^{-q\tau_{b}^{-}}]R^{(q)}f(b).
	\end{align}
	It follows for $x,y \in I_{>b}$
	\begin{align}		
	&|R^{(q)}f(x) - R^{(q)}f(y)| \\
	\leq &||f||_{\infty} \left(\frac{1 - \bE_{x}[\mathrm{e}^{-q\tau_{b}^{-}}]}{q} + \frac{1 - \bE_{y}[\mathrm{e}^{-q\tau_{b}^{-}}]}{q} \right) + |\bE_{x}[\mathrm{e}^{-q\tau_{b}^{-}}] - \bE_{y}[\mathrm{e}^{-q\tau_{b}^{-}}]| \cdot |R^{(q)}f(b)| \\
	< &\frac{3||f||_{\infty}}{q}\eps.
	\end{align}
	Thus, the limit $R^{(q)}f(\infty) := \lim_{x \to \infty}R^{(q)}f(x)$ exists.
	Combining this with the definition of $C$, we see that $R^{(q)}$ maps $C_{\infty}(I')$ to itself.

	Consider the following condition:
	\begin{align}
		\lim_{q \to \infty}qR^{(q)}f(x) = f(x) \quad \text{for $x \in I'$ and $f \in C_{\infty}(I')$.} \label{eq25}
	\end{align}
	It is well known that in this situation \eqref{eq25} implies the strong continuity of the resolvent; $\sup_{x \in I'}|qR^{(q)}f(x) - f(x)| \xrightarrow{q \to \infty} 0$ (see e.g., \cite[Proof of (6.7) in Chapter III]{RogersWilliams1}).
	Then $(R^{(q)})_{q > 0}$ turns out to be a strongly continuous contraction resolvent on $C_{\infty}(I')$, and by the Hille-Yosida theorem the corresponding Feller semigroup exists.
	Thus, we show \eqref{eq25}.
	Since \eqref{eq25} is obvious for $x \in I$ from the right-continuity of paths, we only prove for $x = \infty$.
	Fix $\eps > 0$.
	Let $f \in C_{\infty}(I')$ and take $b \in I$ so that
	\begin{align}
		|f(x)-f(\infty)| < \eps \quad \text{and} \quad \bP_{x}[\zeta > \tau_{b}^{-}] > 1 - \eps \quad \text{for every $x \in I_{>b}$}.
	\end{align}
	Using the following easily checked equality
	\[
	1 = q  \int_{0}^{\infty}\mathrm{e}^{-qt} \bP_{x}[\zeta > \tau_{b}^{-} > t]dt + \bP_{x}[\zeta \leq \tau_{b}^{-}] + \bE_{x}[\mathrm{e}^{-q\tau_{b}^{-}}],
	\]
	we have from the strong Markov property at $\tau_{b}^{-}$ that for any $\delta > 0$ and $x > b+\delta$
	\begin{align}
		&|qR^{(q)}f(x) -f(x)| \\
		\begin{split}
		\leq &q  \int_{0}^{\infty}\mathrm{e}^{-qt}\bE_{x}[ |f(X_{t}) - f(x)|, \zeta > \tau_{b}^{-} > t]dt 
		+ \bE_{x}[\mathrm{e}^{-q\tau_{b}^{-}}]\left| qR^{(q)}f(b) - f(x) \right| \\
		&+ |f(x)|\bP_{x}[\zeta \leq \tau_{b}^{-}]
		\end{split} \\
		\leq &2\eps + 2||f||_{\infty}\bE_{b+\delta}[\mathrm{e}^{-q\tau_{b}^{-}}] + ||f||_{\infty}\eps.
	\end{align}
	Thus, taking limit as $x \to \infty$, we see that
	\[
	|qR^{(q)}f(\infty) -f(\infty)| \leq (2+||f||_{\infty})\eps + 2||f||_{\infty}\bE_{b+\delta}[\mathrm{e}^{-q\tau_{b}^{-}}].
	\]
	Since $\bP_{b+\delta}[\tau_{b}^{-} > 0] = 1$ by the right-continuity, it follows that $\lim_{q \to \infty}\bE_{b+\delta}[\mathrm{e}^{-q\tau_{b}^{-}}] = 0$.
	Hence, we obtain \eqref{eq25}.

	We have the Feller semigroup $(\bar{p}_{t})_{t \geq 0}$ on $C_{\infty}(I')$ associated with $(R^{(q)})_{q > 0}$.
	There exists an associated Hunt process $Y=(\Omega', \cF', \cF'_{t}, Y_{t}, \theta'_{t}, \bP'_x)$ on $I' \cup \{\partial'\}$ (see e.g., \citet[(9,4) in Chapter 1]{BluGet1968}).
	By abuse of notation, we denote the transition operator by the same symbol $(\bar{p}_{t})_{t \geq 0}$; $\bar{p}_{t}f(x) := \bE'_{x}[f(Y_{t})]$ for non-negative measurable function $f: I' \to \bR$.
	Finally, we show \eqref{neverReturn}.
	We first check 
	\begin{align}
		\bar{p}_{t}1_{\{\infty\}}(\infty) = 0 \quad \text{for every $t > 0$}. \label{eq29}
	\end{align}
	For $b > 1$, let $f_{b}:I' \to [0,1]$ be an element of $C$ such that $f_{b}(x)= 0 \ (x \in I_{<b-1})$ and $f_{b}(x)=1 \ (x \in I_{>b} \cup \{\infty\})$.
	On the one hand, we have from the dominated convergence theorem
	\begin{align}
		R^{(q)}f_{b}(x) \xrightarrow{x \to \infty} \int_{0}^{\infty}\mathrm{e}^{-qt}\bar{p}_{t}f_{b}(\infty)dt \xrightarrow{b \to \infty} \int_{0}^{\infty}\mathrm{e}^{-qt}\bar{p}_{t}1_{\{\infty\}}(\infty)dt. \label{eq26}
	\end{align}
	On the other hand, we have for $c \in I_{<x}$ that
	\begin{align}
		R^{(q)}f_{b}(x) = &\bE_{x}[\mathrm{e}^{-q\tau_{c}^{-}}]R^{(q)}f_{b}(c)
		+ \int_{0}^{\infty}\mathrm{e}^{-qt} \bE_{x}[f_{b}(X_{t}), \tau_{c}^{-} > t]dt.
	\end{align}
	The first term vanishes as $x \to \infty$ and then $b \to \infty$ since $\bE_{x}[\mathrm{e}^{-q\tau_{c}^{-}}] \leq 1$ and 
	\begin{align}
		R^{(q)}f_{b}(c) \leq \int_{0}^{\infty}\mathrm{e}^{-qt}\bP_{c}[X_{t} > b-1]dt \xrightarrow{b \to \infty} 0.
	\end{align} 
	Thus, it follows
	\begin{align}
		\limsup_{b \to \infty}\lim_{x \to \infty}R^{(q)}f_{b}(x) \leq \int_{0}^{\infty}\mathrm{e}^{-qt}\lim_{x \to \infty}\bP_{x}[\tau_{c}^{-} > t]dt.
	\end{align}
	Since $c \in I$ can be taken arbitrarily large, we have $\lim_{b \to \infty}\lim_{x \to \infty}R^{(q)}f_{b}(x) = 0$.
	Consequently, we have from \eqref{eq26} that $\bar{p}_{t}1_{\{\infty\}}(\infty) = 0$ for almost every $t > 0$.
	This is readily extended to all $t > 0$ by the semigroup property and $\bar{p}_{t}1_{\{\infty\}}(x) = 0 \ (x \in I)$.
	Thus, we have \eqref{eq29}.
	
	Set $\tau := \inf \{ t > 0 \mid Y_{t} = \infty \}$ and assume $\bP'_{\infty}[\tau < \infty] > 0$.
	From \eqref{eq29}, the state $\infty$ cannot be a holding point; $\bP'_{\infty}[\tau_{I} = 0] = 1$.
	Thus, there exist $t > 0$ and $r > 0$ such that
	\begin{align}
			\bP'_{\infty}[\tau_{r}^{-} < t, \tau \circ \theta'_{\tau_{r}^{-}} + \tau_{r}^{-} < t] > 0. \label{eq42}
	\end{align}
	By the strong Markov property at $\tau_{r}^{-}$, we have
	\[
	\bP'_{\infty}[\tau_{r}^{-} < t, \tau \circ \theta'_{\tau_{r}^{-}} + \tau_{r}^{-} < t] \leq \bP'_{\infty}\left[\bP_{Y_{\tau_{r}^{-}}}[\tau_{\infty} < \zeta], \tau_{r}^{-} < t \right].
	\]
	Since $Y_{\tau_{r}^{-}} \in I$ on $\{\tau_{r}^{-} < t\}$, we have $\bP_{Y_{\tau_{r}^{-}}}[\tau_{\infty} < \zeta] = 0$.
	This contradicts to \eqref{eq42}, and we obtain $\bP'_{\infty}[\tau < \infty] = 0$.
	The proof is complete.
\end{proof}

\begin{Rem} \label{rem:absenceOfNegativeJumpFromInfinity}
	The process $Y$ starting from $\infty$ does not have negative jumps.
	Indeed, for every $n \in \bN$ and $x < y$, we have
	\begin{align}
		&\bP'_{\infty}[1/n < \tau_{x}^{-} < \tau_{y}, Y_{1/n} \in (y,\infty)] \\
		= &\bP'_{\infty}[\bP_{Y_{1/n}}[\tau_{x}^{-} < \tau_{y}], 1/n < \tau_{x}^{-} \wedge \tau_{y}, Y_{1/n} \in (y,\infty)]\\
		= &0.
	\end{align}
	Taking limit as $n \to \infty$, we obtain $\bP'_{\infty}[\tau_{x}^{-} < \tau_{y}] = 0$, where we note that $1_{(y,\infty)}(Y_{1/n}) \to 1$ $\bP'_{\infty}$-almost surely by the right-continuity and \eqref{neverReturn}.
\end{Rem}

\begin{Rem}
	We easily check that when the process $X$ satisfies (A1)-(A3), the process $Y$ also satisfies them.
	Note that from \eqref{neverReturn}, a reference measure $m_{Y}$ of $Y$ can be given by $m_{Y}(\cdot) := m_{X}(\cdot \cap I)$, where $m_{X}$ is the reference measure of $X$. 
\end{Rem}


For later use, we present a regularity of the transition semigroup, which was essentially shown in \cite[Proof of Theorem 2.2]{entranceBdry}.

\begin{Prop} \label{prop:limitAtInfinity}
	Suppose \eqref{instantaneousEntrance} holds.
	Then for every bounded continuous function $f:I \to \bR $ and $t > 0$, there exists limit $\lim_{x \to \infty}p_{t}f(x) \in \bR$.
\end{Prop}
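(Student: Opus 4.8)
The plan is to mimic the resolvent argument used in the proof of Theorem~\ref{thm:entranceSufficient2} and transfer it from the resolvent to the semigroup, using the strong Markov property at a hitting time $\tau_b^-$ together with the weak convergence of the hitting-time distributions supplied by Lemma~\ref{lem:convergenceOfHittingDistribution}. Fix a bounded continuous $f:I\to\bR$ and $t>0$. Since $f$ need not vanish near $0$ we first reduce to that case: write $f = g + c$ with $c$ constant and $g$ continuous; in case (I) the process is conservative so $p_t c(x)=c$ and it suffices to treat $g$, while in case (II) one must be slightly more careful because mass escapes to the cemetery. In either case, by truncating we may assume $f$ is continuous with $\lim_{x\to 0}f(x)$ existing (indeed, for the existence of $\lim_{x\to\infty}p_t f(x)$ the behaviour of $f$ near $0$ only enters through the already-Feller part of the dynamics), so that $f$ differs from an element of $C$ by something harmless; I would keep this bookkeeping light and focus on $f\in C$, since the general bounded continuous case follows by adding constants.

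\textbf{Main estimate.} By Proposition~\ref{prop:entranceEquivalence} choose $b\in I$ large enough that $\sup_{x>b}\bE_x\tau_b^- <\infty$, so that $\bP_x[\tau_b^- \in(0,\infty)]=1$ for $x>b$. For $x\in I_{>b}$ the strong Markov property at $\tau_b^-$ gives
\begin{align}
	p_t f(x) = \bE_x[f(X_t),\,\tau_b^- > t] + \bE_x\!\big[ p_{t-\tau_b^-}f(X_{\tau_b^-})\,;\, \tau_b^- \le t \big]. \label{eq:semigroupSplit}
\end{align}
Now $X_{\tau_b^-}\le b$ by absence of negative jumps (in fact $X_{\tau_b^-}\in I_{\le b}$), and since $f$ is bounded and the map $(s,y)\mapsto p_s f(y)$ is jointly Borel and bounded, the second term equals $\int_{(0,t]} \bE_x[\,\cdot\,]$ against the law of $\tau_b^-$. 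I would like to replace $p_{t-\tau_b^-}f(X_{\tau_b^-})$ by $p_{t-\tau_b^-}f(b)$, using that $X_{\tau_b^-}=b$ whenever $X$ is continuous at $\tau_b^-$; by absence of negative jumps the only way $X_{\tau_b^-}<b$ strictly is never (the process cannot jump downward past $b$), so in fact $X_{\tau_b^-}=b$ $\bP_x$-a.s.\ for $x>b$. Hence the second term in \eqref{eq:semigroupSplit} is exactly $\bE_x[\,\psi(\tau_b^-)\,]$ where $\psi(s):=p_{t-s}f(b)\mathbf 1_{\{s\le t\}}$, a bounded function with at most one discontinuity (at $s=t$). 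The first term in \eqref{eq:semigroupSplit} is bounded by $\|f\|_\infty\,\bP_x[\tau_b^->t]$, which by \eqref{instantaneousEntrance} tends to $0$ as $x\to\infty$ and then $b\to\infty$.

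\textbf{Passing to the limit.} By Lemma~\ref{lem:convergenceOfHittingDistribution}, for $b\ge b_0$ the laws $\bP_x[\tau_b^-\in\cdot\,]$ converge weakly as $x\to\infty$ to a probability measure $\mu_b$ on $(0,\infty)$. If the limit law $\mu_b$ assigns no atom to the point $t$, then $\bE_x[\psi(\tau_b^-)] \to \int \psi\,d\mu_b$, giving $\lim_{x\to\infty}p_t f(x) = \int_{(0,t]} p_{t-s}f(b)\,\mu_b(ds)$ up to an error controlled by $\|f\|_\infty\,\mu_b((0,t])^{c}$-type terms vanishing as $b\to\infty$; more precisely one shows the right-hand side is Cauchy in $b$ and concludes existence of $\lim_{x\to\infty}p_t f(x)$. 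The atom issue is handled as in \cite{entranceBdry}: $\mu_b$ has no atoms at all, since by the Markov property at any $s<t$ one sees $\mu_b$ is the law of a hitting time that "forgets" its starting point in the limit and hence is non-atomic (alternatively, enlarge $b$ slightly and use $\tau_b^- < \tau_{b'}^-$ for $b<b'$ to rule out atoms, or simply choose $b$ from a co-countable set). \textbf{The main obstacle} I expect is exactly this: justifying that the convergence $\bE_x[\psi(\tau_b^-)]\to\int\psi\,d\mu_b$ is legitimate despite $\psi$ having a discontinuity at $s=t$ — this requires knowing $\mu_b(\{t\})=0$, and also requires some regularity of $s\mapsto p_{t-s}f(b)$ near $s=t$ (right-continuity at $0$ of $r\mapsto p_r f(b)$, which holds by the right-continuity of paths and dominated convergence). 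A clean route is to first prove the statement for $t$ outside a countable exceptional set (the union over $b$ of the atoms of $\mu_b$, which is empty or countable), and then remove the exception using the semigroup property $p_t f = p_{t-\varepsilon}(p_\varepsilon f)$ together with the fact, from Theorem~\ref{thm:entranceSufficient2}, that $p_\varepsilon f$ is itself bounded continuous with a limit at $\infty$, so $\lim_{x\to\infty}p_t f(x) = \bar p_{t-\varepsilon}(\widehat{p_\varepsilon f})(\infty)$ where $\widehat{\,\cdot\,}$ denotes the continuous extension to $I\cup\{\infty\}$ — this makes the claim immediate for every $t$ once it is known the extended semigroup $\bar p_\bullet$ acts on the extended space, which is precisely the content of Theorem~\ref{thm:entranceSufficient2}.
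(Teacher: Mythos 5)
Your overall strategy is the same as the paper's: split $p_tf(x)$ by the strong Markov property at $\tau_b^-$, use \eqref{instantaneousEntrance} to kill the pre-hitting term in the double limit $x\to\infty$, $b\to\infty$, and use Lemma \ref{lem:convergenceOfHittingDistribution} to pass to the limit in the post-hitting term. However, the one point you yourself flag as ``the main obstacle'' --- the discontinuity of $s\mapsto p_{t-s}f(b)1_{\{s\le t\}}$ at $s=t$ against a possible atom of the limit law $\mu_b$ at $t$ --- is exactly the point you do not close. The assertion that $\mu_b$ is non-atomic is false in general: for a deterministic decreasing flow such as $\dot x=-x^2$ on $(0,\infty)$ (which satisfies \eqref{instantaneousEntrance} and the resolvent hypotheses of Theorem \ref{thm:entranceSufficient2}), the laws $\bP_x[\tau_b^-\in\cdot\,]$ converge to the point mass at $1/b$, so $\mu_b$ is purely atomic. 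Your fallback --- prove the claim for $t$ outside a countable exceptional set and remove the exception via $p_tf=p_{t-\eps}(p_\eps f)$ --- also has a gap: the hypotheses here concern only the resolvent, so $p_\eps f$ is not known to be continuous on $I$ for a general bounded continuous $f$ (the Feller property gives this only for $f\in C_\infty(I\cup\{\infty\})$), and knowing merely that $\lim_{x\to\infty}p_\eps f(x)$ exists does not let you feed $p_\eps f$ back into the already-proved cases, nor evaluate $\bar p_{t-\eps}$ of it at $\infty$ (that would require a strong-Feller-type continuity at $\infty$ for bounded measurable data, which is not available without (SF)).

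The paper's proof avoids the atom question entirely by a small but essential modification: it replaces $p_{t-s}f(b)1_{\{s\le t\}}$ by $g(b,s):=p_{(t-s)\vee 0}f(b)$, which is bounded and continuous on $(0,\infty)$ (continuity across $s=t$ comes from $\lim_{r\downarrow 0}p_rf(b)=f(b)$, and continuity elsewhere from the right- and quasi-left-continuity of paths), at the cost of an extra term $f(b)\bP_x[\tau_b^->t]$, which is absorbed into the error already controlled by \eqref{instantaneousEntrance}. With $g(b,\cdot)$ continuous, the weak convergence from Lemma \ref{lem:convergenceOfHittingDistribution} applies with no restriction on $t$, and a Cauchy estimate on $|p_tf(x)-p_tf(y)|$ finishes the proof. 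If you incorporate this modification your argument becomes the paper's; as written, it has a genuine gap.
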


\begin{proof}
	Fix $t > 0$.
	Set $g(b,u) := p_{(t-u)\vee 0 }f(b)$.
	By the right-continuity and quasi-left-continuity of paths, the function $[0,\infty] \ni u \mapsto g(b,u)$ is bounded continuous.
	From the strong Markov property, we have for $x > b$
	\begin{align}
		p_{t}f(x) &= \bE_{x}[f(X_{t}),\tau_{b}^{-} > t] + \bE_{x}[p_{t-\tau_{b}^{-}}f(b),\tau_{b} \leq t] \\
		&= \bE_{x}[f(X_{t}),\tau_{b}^{-} > t] + \bE_{x}[g(b,\tau_{b}^{-})] - f(b)\bP_{x}[\tau_{b}^{-} > t].
	\end{align}
	Thus, it follows for $b < x,y$
	\begin{align}
		|p_{t}f(x)-p_{t}f(y)| \leq 2||f||_{\infty}(\bP_{x}[\tau_{b}^{-} > t] + \bP_{y}[\tau_{b}^{-} > t]) + |\bE_{x}[g(b,\tau_{b}^{-})] - \bE_{y}[g(b,\tau_{b}^{-})]|.
	\end{align}
	Each term in the RHS vanishes as $x,y \to \infty$ and then $b \to \infty$ from \eqref{instantaneousEntrance} and Proposition \ref{prop:entranceEquivalence} \eqref{prop:entranceEquivalence(i)}, respectively.
	The proof is complete.
\end{proof}

When the condition \eqref{instantaneousEntrance} fails,
the process $X$ is a Feller process on $I$ under the same regularity on the resolvent without extending the state space.

\begin{Prop}
	Suppose \eqref{instantaneousEntrance} does not hold.
	Assume (i), (ii) in Theorem \ref{thm:entranceSufficient2}.
	Then the semigroup $(p_{t})_{t \geq 0}$ is a Feller semigroup on $C_{\infty}(I)$.
\end{Prop}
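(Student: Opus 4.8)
The plan is to show that the resolvent family $(R^{(q)})_{q>0}$ restricts to a positivity-preserving, strongly continuous contraction resolvent on $C_{\infty}(I)$, and then to conclude via the Hille--Yosida theorem, following the proof of Theorem \ref{thm:entranceSufficient2}. By condition (i) of that theorem we already have $R^{(q)}f \in C$ for $f \in C_{\infty}(I) \subset C$, so $R^{(q)}f$ is bounded continuous and vanishes at $0$ when $0 \notin I$; the only genuinely new point is to show $R^{(q)}f(x) \to 0$ as $x \to \infty$, and this is where the failure of \eqref{instantaneousEntrance} must be used.

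The key lemma I would isolate is: \emph{if \eqref{instantaneousEntrance} fails, then $L(b,q) := \lim_{x\to\infty}\bE_{x}[\mathrm{e}^{-q\tau_{b}^{-}}] = 0$ for every $q>0$ and every large enough $b$.} Since $X$ has no negative jumps it creeps downward, so $X_{\tau_{b}^{-}} = b$ on $\{ \tau_{b}^{-} < \infty \}$; hence the strong Markov property at $\tau_{c}^{-}$ gives, for $b<c<x$, the multiplicativity $\bE_{x}[\mathrm{e}^{-q\tau_{b}^{-}}] = \bE_{x}[\mathrm{e}^{-q\tau_{c}^{-}}]\,\bE_{c}[\mathrm{e}^{-q\tau_{b}^{-}}]$. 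In particular $x \mapsto \bE_{x}[\mathrm{e}^{-q\tau_{b}^{-}}]$ is non-increasing, so $L(b,q)$ exists, and letting $x\to\infty$ yields $L(b,q) = L(c,q)\,\bE_{c}[\mathrm{e}^{-q\tau_{b}^{-}}]$. Suppose $L(b,q)>0$. Since $\bE_{c}[\mathrm{e}^{-q\tau_{b}^{-}}]\downarrow L(b,q)$ as $c\to\infty$, this forces $L(c,q) = L(b,q)/\bE_{c}[\mathrm{e}^{-q\tau_{b}^{-}}] \to 1$. On the other hand, from $1-\mathrm{e}^{-qu} \ge (1-\mathrm{e}^{-qs})\,1\{ u \ge s \}$ one gets $\bP_{x}[\tau_{c}^{-}\ge s] \le (1-\bE_{x}[\mathrm{e}^{-q\tau_{c}^{-}}])/(1-\mathrm{e}^{-qs})$, hence $\lim_{x\to\infty}\bP_{x}[\tau_{c}^{-}\le s] \ge 1 - (1-L(c,q))/(1-\mathrm{e}^{-qs})$ for every $s>0$; letting $c\to\infty$ and using $L(c,q)\to1$, we obtain $\lim_{b\to\infty}\lim_{x\to\infty}\bP_{x}[\tau_{b}^{-}\le s] = 1$ for every $s>0$, i.e.\ \eqref{instantaneousEntrance} holds, contradicting the hypothesis. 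Hence $L(b,q)=0$.

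Granting the lemma, $R^{(q)}$ preserves $C_{\infty}(I)$. Indeed, for $f\in C_{\infty}(I)$ it suffices (since $|R^{(q)}f|\le R^{(q)}|f|$ and $|f|\in C_{\infty}(I)$) to prove $R^{(q)}g(x)\to0$ as $x\to\infty$ for $g:=|f|\ge0$. Splitting the integral defining $R^{(q)}g(x)$ at $\tau_{b}^{-}$ and applying the strong Markov property with $X_{\tau_{b}^{-}}=b$ on $\{ \tau_{b}^{-} < \infty \}$ gives, for $x>b$, $R^{(q)}g(x) = \bE_{x}[\int_{0}^{\tau_{b}^{-}}\mathrm{e}^{-qt}g(X_{t})\,dt] + \bE_{x}[\mathrm{e}^{-q\tau_{b}^{-}}]\,R^{(q)}g(b)$; since $X_{t}\in I_{>b}$ for $t<\tau_{b}^{-}$, the right-hand side is at most $q^{-1}\sup_{y\in I_{>b}}g(y) + \bE_{x}[\mathrm{e}^{-q\tau_{b}^{-}}]\,\|g\|_{\infty}/q$. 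Taking $x\to\infty$ and using the lemma, $\limsup_{x\to\infty}R^{(q)}g(x) \le q^{-1}\sup_{y\in I_{>b}}g(y)$, which tends to $0$ as $b\to\infty$ because $g\in C_{\infty}(I)$; as $R^{(q)}g\ge0$, this forces $R^{(q)}g(x)\to0$.

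Finally, $(R^{(q)})_{q>0}$ is positivity-preserving, satisfies the resolvent equation and $\|qR^{(q)}\|\le1$, and condition (ii) of Theorem \ref{thm:entranceSufficient2} gives $qR^{(q)}f(x)\to f(x)$ for all $x\in I$ and $f\in C_{\infty}(I)$; as recalled in the proof of Theorem \ref{thm:entranceSufficient2} (see \citet[Proof of (6.7) in Chapter III]{RogersWilliams1}), this upgrades to $\|qR^{(q)}f-f\|_{\infty}\to0$, so $(R^{(q)})_{q>0}$ is a strongly continuous contraction resolvent on $C_{\infty}(I)$, hence, by the Hille--Yosida theorem, the resolvent of a Feller semigroup $(\bar{p}_{t})_{t\ge0}$ on $C_{\infty}(I)$. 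For $f\in C_{\infty}(I)$ and $x\in I$, the map $t\mapsto p_{t}f(x)=\bE_{x}[f(X_{t})]$ is right-continuous (by right-continuity of the paths and bounded convergence) with Laplace transform $R^{(q)}f(x)$, as is the continuous map $t\mapsto\bar{p}_{t}f(x)$; by uniqueness of Laplace transforms they agree, so $p_{t}=\bar{p}_{t}$ on $C_{\infty}(I)$ and $(p_{t})_{t\ge0}$ is a Feller semigroup on $C_{\infty}(I)$. The one real obstacle is the lemma: it is the quantitative form of non-entrance and the only place where the creeping property and the equivalences of Proposition \ref{prop:entranceEquivalence} are genuinely used; everything else is a routine adaptation of the proof of Theorem \ref{thm:entranceSufficient2}.
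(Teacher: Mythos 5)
Your proposal is correct and follows essentially the same route as the paper: both reduce the claim to showing $\lim_{x\to\infty}\bE_{x}[\mathrm{e}^{-q\tau_{b}^{-}}]=0$, derive this from the multiplicativity $\bE_{x}[\mathrm{e}^{-q\tau_{b}^{-}}]=\bE_{x}[\mathrm{e}^{-q\tau_{c}^{-}}]\bE_{c}[\mathrm{e}^{-q\tau_{b}^{-}}]$ together with the failure of \eqref{instantaneousEntrance}, and then kill $R^{(q)}f$ at infinity by splitting at $\tau_{b}^{-}$. The only cosmetic difference is that you argue by contradiction (if the limit were positive, a Chebyshev bound would force \eqref{instantaneousEntrance}), whereas the paper directly shows $\lim_{c}\lim_{x}\bE_{x}[\mathrm{e}^{-q\tau_{c}^{-}}]<1$ via the Laplace transform of the hitting-time distribution.
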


\begin{proof}
	To check that $(R^{(q)})_{q > 0}$ is a strongly continuous contraction resolvent on $C_{\infty}(I)$, it suffices to show that $\lim_{x \to \infty}R^{(q)}f(x) = 0 \ (q > 0, \ f\in C_{\infty}(I))$.
	Let $0 < b < x$.
	From the strong Markov property at $\tau_{b}^{-}$ we have
	\begin{align}
		|R^{(q)}f(x)| \leq \sup_{y \in I}|f(y)|\int_{0}^{\infty}\mathrm{e}^{-qt}\bP_{x}[\tau_{b}^{-} \leq t]dt + q^{-1}\sup_{y > b}|f(y)|.
	\end{align}
	The RHS goes to $0$ as $x \to \infty$ and then $b \to \infty$ from the dominated convergence theorem and Proposition \ref{prop:entranceEquivalence} \eqref{prop:entranceEquivalence(ii)}.
\end{proof}

Finally, we present an equivalent condition to \eqref{instantaneousEntrance}.

\begin{Prop} \label{prop:twoEntranceConditions}
	Let the scale function exist, that is, (A1)-(A3) hold.
	Then \eqref{instantaneousEntrance} holds if and only if \eqref{entranceCondition} and \eqref{RareKillingAroundInfinity} hold.
\end{Prop}

\begin{Rem}
	The condition \eqref{RareKillingAroundInfinity} is apparently similar to \eqref{instantaneousEntrance}, but they are quite different.
	Roughly speaking, the condition \eqref{instantaneousEntrance} states that the process leaves the neighborhood of infinity quickly without being killed, whereas the condition \eqref{RareKillingAroundInfinity} means that killing rarely happens around the neighborhood of infinity.
	For example, any process on $(0,\infty)$ with $\bP_{x}[\tau_{0} < \infty] = 1 \ (x > 0)$ satisfies \eqref{RareKillingAroundInfinity} regardless of whether it satisfies \eqref{instantaneousEntrance}.
\end{Rem}

\begin{proof}[Proof of Proposition \ref{prop:twoEntranceConditions}]
	Let $q > 0$.
	First, note that since 
	\[
	W(b,z) \leq W^{(q)}(b,z) \leq W(b,z) \mathrm{e}^{q\bar{W}(b,z)}
	\]
	for $q \geq 0$ and $z > b$ from \eqref{eq43} and \eqref{eq01}, we see that $\bar{W}(b,\infty) < \infty$ is equivalent to $\int_{(b,\infty)}W^{(q)}(b,u)m(du) < \infty$.
	From Theorem \ref{thm:potentialDensity}, we have for $b < x < z$
	\begin{align}
		\frac{1 - \bE_{x}[\mathrm{e}^{-q(\tau_{b}^{-} \wedge \tau_{z}^{+}\wedge \zeta)}]}{q} = \int_{(b,z)}\left(\bE_{x}[\mathrm{e}^{-q\tau_{b}^{-}},\tau_{b}^{-} < \tau_{z}^{+}]W^{(q)}(b,u) - W^{(q)}(x,u)\right)m(du). \label{eq44}
	\end{align}

	Suppose \eqref{instantaneousEntrance}.
	The condition \eqref{RareKillingAroundInfinity} is obviously satisfied.
	From \eqref{eq44}, it follows
	\begin{align}
		\frac{1}{q} \geq &\int_{(b,z)}\left(\bE_{x}[\mathrm{e}^{-q\tau_{b}^{-}},\tau_{b}^{-} < \tau_{z}^{+}]W^{(q)}(b,u) - W^{(q)}(x,u)\right)m(du) \\
		\geq &\bE_{x}[\mathrm{e}^{-q\tau_{b}^{-}},\tau_{b}^{-} < \tau_{z}^{+}]\int_{(b,x)} W^{(q)}(b,u) m(du),
	\end{align}
	where we used $W^{(q)}(x,u) = 0 \ (u < x)$.
	From Proposition \ref{prop:entranceEquivalence} \eqref{prop:entranceEquivalence(iii)}, we can take a large $b_{0} > 0$ such that $\lim_{x \to \infty}\bE_{x}[\mathrm{e}^{-q\tau_{b_{0}}^{-}}] > 0$.
	Taking limit as $z \to \infty$ and then $x \to \infty$, we have from the monotone convergence theorem
	\[
	\frac{1}{q} \geq \lim_{x \to \infty}\bE_{x}[\mathrm{e}^{-q\tau_{b_{0}}^{-}}]\int_{(b_{0},\infty)} W^{(q)}(b_{0},u) m(du).
	\]
	Hence, we obtain $\int_{(b_{0},\infty)} W^{(q)}(b_{0},u) m(du) < \infty$.

	Suppose \eqref{entranceCondition} and \eqref{RareKillingAroundInfinity}.
	First, we consider the RHS of \eqref{eq44}.
	Note that $I_{<u} \ni x \mapsto W^{(q)}(x,u)$ is non-increasing from Proposition \ref{prop:exitProblem1}.
	The dominated convergence theorem gives
	\begin{align}
		&\lim_{b \to \infty}\lim_{x \to \infty}\lim_{z \to \infty}\int_{(b,z)}\left(\bE_{x}[\mathrm{e}^{-q\tau_{b}^{-}},\tau_{b}^{-} < \tau_{z}^{+}]W^{(q)}(b,u) - W^{(q)}(x,u)\right)m(du) \\
		=&\lim_{b \to \infty}\lim_{x \to \infty} \left(\bE_{x}[\mathrm{e}^{-q\tau_{b}^{-}}]\int_{(b,\infty)} W^{(q)}(b,u) m(du) - \int_{(b,\infty)}W^{(q)}(x,u)m(du)\right) \\
		=&\lim_{b \to \infty} \left( \lim_{x \to \infty} \bE_{x}[\mathrm{e}^{-q\tau_{b}^{-}}] \int_{(b,\infty)} W^{(q)}(b,u) m(du) \right) \\
		=&0.
	\end{align}
	Thus, we have from \eqref{eq44} that
	\[
	1 = \lim_{b \to \infty}\lim_{x \to \infty}\lim_{z \to \infty}\bE_{x}[\mathrm{e}^{-q(\tau_{b}^{-} \wedge \tau_{z}^{+}\wedge \zeta)}] = \lim_{b \to \infty}\lim_{x \to \infty}\bE_{x}[\mathrm{e}^{-q(\tau_{b}^{-} \wedge \zeta)}].
	\]
	It is enough to show $\lim_{b \to \infty}\lim_{x \to \infty}\bE_{x}[\mathrm{e}^{-q(\tau_{b}^{-} \wedge \zeta)}] = \lim_{b \to \infty}\lim_{x \to \infty}\bE_{x}[\mathrm{e}^{-q\tau_{b}^{-}}]$.
	We have
	\[
	\bE_{x}[\mathrm{e}^{-q(\tau_{b}^{-} \wedge \zeta)}] = \bE_{x}[\mathrm{e}^{-q\tau_{b}^{-}}] + \bE_{x}[\mathrm{e}^{-q \zeta},\zeta < \tau_{b}^{-}],
	\]
	where we note that $\tau_{b}^{-} = \infty$ on $\{ \zeta < \tau_{b}^{-} \}$.
	The second term in the RHS vanishes as $x \to \infty$ and then $b \to \infty$ from \eqref{RareKillingAroundInfinity}.
	The proof is complete.
\end{proof}

\section{Spectrum of the infinitesimal generator and compactness of the resolvent} \label{section:spectralTheory}

In the previous section, we showed that a process satisfying \eqref{instantaneousEntrance} can be extended to a Feller process under a certain regularity on the resolvent. 
In this section, we study the spectrum of the infinitesimal generator.

Throughout this section, we assume that $I = (0,\infty)$ and (A1)-(A3) hold so that the scale function exists.
In addition, we suppose that \eqref{instantaneousEntrance}, $\bP_{x}[\tau_{0} < \infty] > 0 \ (x > 0)$ and as an essential assumption we assume
\begin{align}
	\zeta = \tau_{0}. \label{killedAt0}
\end{align}
We first show that under these conditions the process always satisfies the assumptions of Theorem \ref{thm:entranceSufficient2}, and hence the process can be extended to a Feller process.
Next, we study the spectrum of the infinitesimal generator.
We show the spectrum coincides with the zeros of the function $q \mapsto Z^{(q)}(0)$ and is composed of eigenvalues.
In what follows, we use the same symbol for the complexification of the function spaces and linear operators on them that we have already introduced, as no confusion is likely to arise.


A crucial consequence of the assumption \eqref{killedAt0} is that the potential density is a meromorphic function on $\bC$.
From \eqref{eq54}, we have for $ 0 \leq y < z$
\begin{align}
	\frac{\bE_{y}[\mathrm{e}^{-q\tau_{z}^{+}}, \tau_{z}^{+} < \tau^-_{0}]}{Z^{(q)}(0,z)} = \frac{Z^{(q)}(y,z)}{Z^{(q)}(0,z)} - \frac{W^{(q)}(y,z)}{W^{(q)}(0,z)}.
\end{align}
Taking the limit as $z \to \infty$ it follows
\begin{align}
	\frac{Z^{(q)}(y)}{Z^{(q)}(0)} = \lim_{z \to \infty}\frac{W^{(q)}(y,z)}{W^{(q)}(0,z)} = \bE_{y}[\mathrm{e}^{-q\tau_{0}},\tau_{0} < \infty],
\end{align}
where $Z^{(q)}(\cdot)$ was defined in \eqref{eq31}.
Plugging this into \eqref{potentialDensity}, we have for $x \in (0,\infty)$ and bounded measurable function $f:(0,\infty) \to \bC$
\begin{align}
	\int_{0}^{\infty}\mathrm{e}^{-qt}p_{t}f(x)dt  = \int_{(0,\infty)} f(u)r^{(q)}(x,u)m(du) \quad (q > 0) \label{potentialDensityOneside}
\end{align}
for
\begin{align}
	r^{(q)}(x,u) := \frac{Z^{(q)}(x)}{Z^{(q)}(0)}W^{(q)}(0,u) - W^{(q)}(x,u). \label{ResolventDensityKilledAt0}
\end{align}
That is, the function $r^{(q)}$ is a potential density of $(p_{t})_{t \geq 0}$.
Denote by $\cN$ the zeros of the function $\bC \ni q \mapsto Z^{(q)}(0)$:
\begin{align}
	\cN := \{ q \in \bC \mid Z^{(q)}(0) = 0 \}. 
\end{align}
The function $q \mapsto r^{(q)}(x,u)$ is a meromorphic function on $\bC$ for each fixed $x,u \in (0,\infty)$ with the poles in $\cN$.
For $q \in \bC \setminus \cN$, we define the integral operator $R^{(q)}f(x) := \int_{(0,\infty)}r^{(q)}(x,y)f(y)m(dy) \ (x \in (0,\infty))$ for any measurable function $f$ with
\begin{align}
	\int_{(0,\infty)}|r^{(q)}(x,u)f(u)|m(du) < \infty. \label{}
\end{align}
Applying the identities \eqref{resolventIdentityW} and \eqref{resolventIdentityZ}, we can easily check that the density $r^{(q)}$ satisfies the following identity:
\begin{align}
	r^{(q)}(x,y) - r^{(q')}(x,y) = (q'-q)\int_{(0,\infty)}r^{(q)}(x,u)r^{(q')}(u,y)m(du) \quad (q,q' \in \bC \setminus \cN). \label{resolventIdentityDensity}
\end{align}
This obviously implies the following resolvent identity:
\begin{align}
	R^{(q)} - R^{(q')} = (q'-q)R^{(q)}R^{(q')} \ (q,q' \in \bC \setminus \cN) \label{resolventIdentityOp}
\end{align}
whenever the both sides are defined on a common function space.
We first show that $(R^{(q)})_{q \in \bC \setminus \cN}$ are bounded operators  on $C_{\infty}(0,\infty]$.

\begin{Prop} \label{prop:boundednessOfResolvent2}
	For $q \in \bC \setminus \cN$ and a measurable function $f:(0,\infty) \to \bC$ such that $\int_{(0,\infty)}W(0,u)|f(u)|m(du) < \infty$, we have
	\begin{align}
		\int_{(0,\infty)}|r^{(q)}(x,u)f(u)|m(du) \leq  \left(1 + \left| \frac{Z^{(q)}(x)}{Z^{(q)}(0)}\right| \right)\mathrm{e}^{|q|\bar{W}(0,\infty)}\int_{(0,\infty)}W(0,u)|f(u)|m(du), \label{Eq06-2}
	\end{align}
	and thus the function $R^{(q)}f:(0,\infty) \to \bC$ is well-defined.
	We also have $R^{(q)}f \in C_{\infty}(0,\infty]$ and
	\begin{align}
		\lim_{x \to \infty}R^{(q)}f(x) = \frac{1}{Z^{(q)}(0)}\int_{(0,\infty)}W^{(q)}(0,u)f(u)m(du). \label{Eq01-2}
	\end{align}
	Moreover, when $f \in C_{\infty}(0,\infty]$, the following holds:
	\begin{align}
		||R^{(q)}f||_{\infty} \leq \left(1 +  \frac{||Z^{(q)}||_{\infty}}{Z^{(q)}(0)} \right)\mathrm{e}^{|q|\bar{W}(0,\infty)}\bar{W}(0,\infty)||f||_{\infty}. \label{Eq06-3}
	\end{align}
	Consequently, $R^{(q)}$ is a bounded operator on $C_{\infty}(0,\infty]$.
\end{Prop}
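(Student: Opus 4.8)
The plan is to reduce the whole statement to a single pointwise bound on the kernel $r^{(q)}$, after which everything follows by dominated convergence. The first step is the elementary inequality $W(x,u)\le W(0,u)$ for $0\le x\le u$, which is immediate from Proposition~\ref{prop:exitProblem1} at $q=0$, since $W(x,u)/W(0,u)=\bP_{x}[\tau_{0}^{-}<\tau_{u}^{+}]\le 1$. Feeding this into the series bound $|W^{(q)}(x,u)|\le\sum_{n\ge 0}|q|^{n}W^{\otimes(n+1)}(x,u)\le W(x,u)\,\mathrm{e}^{|q|\bar{W}(x,u)}$ of Proposition~\ref{prop:defOfM}, together with $\bar{W}(x,u)\le\int_{(0,\infty)}W(0,v)m(dv)=\bar{W}(0,\infty)<\infty$ (the entrance condition; see the remark after Definition~\ref{def:boundaryClassification} and Proposition~\ref{prop:twoEntranceConditions}), I obtain $|W^{(q)}(x,u)|\le W(0,u)\,\mathrm{e}^{|q|\bar{W}(0,\infty)}$ for all $x,u>0$, and likewise for $W^{(q)}(0,u)$. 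Since $W^{(q)}(x,u)=0$ when $u<x$, substituting into \eqref{ResolventDensityKilledAt0} gives
\[
	|r^{(q)}(x,u)|\le\Bigl(1+\Bigl|\tfrac{Z^{(q)}(x)}{Z^{(q)}(0)}\Bigr|\Bigr)W(0,u)\,\mathrm{e}^{|q|\bar{W}(0,\infty)}\qquad(x,u>0),
\]
and multiplying by $|f(u)|$ and integrating $m(du)$ is precisely \eqref{Eq06-2}; in particular $R^{(q)}f$ is well defined. Here I also record that $\sup_{x}|Z^{(q)}(x)|<\infty$, which follows from \eqref{barWbound} and $\bar{W}(x,\infty)\le\bar{W}(0,\infty)$.

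For the membership $R^{(q)}f\in C_{\infty}(0,\infty]$ and the limit formula I would apply dominated convergence three times, each time with the $x$-independent, $m$-integrable majorant $\bigl(1+\|Z^{(q)}\|_{\infty}/|Z^{(q)}(0)|\bigr)\mathrm{e}^{|q|\bar{W}(0,\infty)}W(0,u)|f(u)|$ coming from the displayed bound. At an interior point $x_{0}\in(0,\infty)$: the factor $x\mapsto Z^{(q)}(x)$ is continuous, being the locally uniform limit of the continuous maps $Z^{(q)}(\cdot,z)$ of Remark~\ref{rem:W(x,x)m(x)=0} (the uniformity on compacts coming from the bound above), and $x\mapsto W^{(q)}(x,u)$ is continuous on $\tilde{I}_{<u}$ by Proposition~\ref{prop:propertiesOfScaleFunc}(ii) and vanishes for $x>u$, so $r^{(q)}(x,u)\to r^{(q)}(x_{0},u)$ for $m$-a.e.\ $u$, the point $u=x_{0}$ being negligible since $W^{(q)}(x_{0},x_{0})m\{x_{0}\}=0$. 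As $x\to 0$: we have $0\in\tilde{I}$ because $\bP_{x}[\tau_{0}<\infty]>0$, the same continuity reaches $x=0$, and $r^{(q)}(0,u)=0$ directly from \eqref{ResolventDensityKilledAt0}, whence $\lim_{x\to 0}R^{(q)}f(x)=0$. As $x\to\infty$: $W^{(q)}(x,u)\to 0$ (it is $0$ once $x>u$) and $Z^{(q)}(x)=1+q\int_{(x,\infty)}W^{(q)}(x,u)m(du)\to 1$, because $\int_{(x,\infty)}|W^{(q)}(x,u)|m(du)\le\mathrm{e}^{|q|\bar{W}(0,\infty)}\int_{(x,\infty)}W(0,u)m(du)$ is a vanishing tail of the convergent integral $\bar{W}(0,\infty)$; hence $r^{(q)}(x,u)\to W^{(q)}(0,u)/Z^{(q)}(0)$ pointwise and dominated convergence yields \eqref{Eq01-2}. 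Altogether $R^{(q)}f$ extends continuously to $(0,\infty]$ and vanishes at $0$, i.e.\ $R^{(q)}f\in C_{\infty}(0,\infty]$.

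Finally, the norm bound \eqref{Eq06-3} is just \eqref{Eq06-2} specialised to $f\in C_{\infty}(0,\infty]$: estimate $\int_{(0,\infty)}W(0,u)|f(u)|m(du)\le\|f\|_{\infty}\bar{W}(0,\infty)$ and $|Z^{(q)}(x)/Z^{(q)}(0)|\le\|Z^{(q)}\|_{\infty}/|Z^{(q)}(0)|$, then take the supremum over $x\in(0,\infty)$, which by the continuity already established also controls the value at $\infty$. Linearity being evident, $R^{(q)}$ is a bounded operator on $C_{\infty}(0,\infty]$. I expect the only genuinely delicate part to be the continuity-and-domination bookkeeping for $r^{(q)}(x,u)$ near the atoms of $m$ and at the two ends $0$ and $\infty$; the kernel estimate itself is routine once $W(x,u)\le W(0,u)$ and Proposition~\ref{prop:defOfM} are in hand.
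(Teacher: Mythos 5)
Your proposal is correct and follows essentially the same route as the paper: the kernel bound $|W^{(q)}(x,u)|\le W(x,u)\mathrm{e}^{|q|\bar W(x,u)}\le W(0,u)\mathrm{e}^{|q|\bar W(0,\infty)}$ (via $W(x,u)\le W(0,u)$ and Proposition \ref{prop:defOfM}) yielding \eqref{Eq06-2}, then dominated convergence with that majorant for the continuity, the vanishing at $0$, and the limit \eqref{Eq01-2}. You simply spell out the endpoint and atom bookkeeping that the paper's two-line proof leaves implicit.
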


\begin{proof}
	Recall that from \eqref{entranceCondition}, \eqref{barWbound}, and $\bP_{x}[\tau_{0} < \infty] > 0$, we have $\bar{W}(0,\infty) < \infty$ and $||Z^{(q)}||_{\infty} < \infty$.
	It follows
	\begin{align}
		\int_{(0,\infty)}|r^{(q)}(x,u)f(u)|m(du) 
		\leq \left(1 + \left| \frac{Z^{(q)}(x)}{Z^{(q)}(0)}\right| \right)\mathrm{e}^{|q|\bar{W}(0,\infty)}\int_{(0,\infty)}W(0,u)|f(u)|m(du) < \infty. \label{eq32}
	\end{align}
	Thus, the function $R^{(q)}f$ is well-defined and bounded.
	Applying \eqref{barWbound} and the dominated convergence theorem to 
	\begin{align}
		R^{(q)}f(x) = \frac{Z^{(q)}(x)}{Z^{(q)}(0)}\int_{(0,\infty)}W^{(q)}(0,u)f(u)m(du) - \int_{(0,\infty)}W^{(q)}(x,u)f(u)m(du), \label{}
	\end{align}
	we have the desired continuity of $R^{(q)}f$ and \eqref{Eq01-2}.
\end{proof}

From Proposition \ref{prop:boundednessOfResolvent}, we see that $(R^{(q)})_{q > 0}$ satisfies the assumption of Theorem \ref{thm:entranceSufficient2}.
Therefore, without loss of generality, we may assume that the process $X$ is given as a Feller process on $(0,\infty]$ from the beginning. Henceforth, $X$ will be regarded as such a process, and we denote the transition semigroup on $(0,\infty]$ by $(p_{t})_{t \geq 0}$.
By the Hille-Yosida theorem, there exists a infinitesimal generator $A$ of $(p_{t})_{t \geq 0}$ on $C_{\infty}(0,\infty]$.
We denote the spectrum of $A$ by $\sigma(A)$ and the resolvent set by $\rho(A)$.

We show that $\sigma(A) = \cN$ and $(R^{(q)})_{q \in \bC \setminus \cN}$ is the resolvent of $A$.
For linear spaces $\cX$ and $\cY$, we denote the kernel and the range of a linear operator $B:\cX \to \cY$ by $\cN(B)$ and $\cR(B)$, respectively, and denote the identity map on $\cX$ by $\mathrm{Id}_{\cX}$.

\begin{Thm} \label{thm:SpectrumIsZeros2}
	We have $\sigma(A) = \cN$ and $(q - A)^{-1} = R^{(q)} \ (q \in \rho(A))$.
	Moreover, for $q \in \sigma(A)$, we have $Z^{(q)} \in \cD(A)$ and $AZ^{(q)} = q Z^{(q)}$, that is, $\sigma(A)$ comprises eigenvalues.
\end{Thm}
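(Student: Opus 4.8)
The plan is to prove the two inclusions $\bC\setminus\cN\subseteq\rho(A)$ and $\cN\subseteq\sigma(A)$ separately; the first will come with $(q-A)^{-1}=R^{(q)}$, and the second with the eigenfunction, and together they give all the assertions. For the first inclusion, I would start from the remark that for $q>0$ the identity \eqref{potentialDensityOneside} says exactly that $R^{(q)}$ is the probabilistic $q$-resolvent $\int_{0}^{\infty}\mathrm{e}^{-qt}p_{t}\,dt$, which by the Hille--Yosida theorem equals $(q-A)^{-1}$; hence $(0,\infty)\subseteq\rho(A)$ and each $R^{(q)}$ with $q>0$ is injective with $\cR(R^{(q)})=\cD(A)$. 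By Proposition~\ref{prop:boundednessOfResolvent2} the whole family $(R^{(q)})_{q\in\bC\setminus\cN}$ consists of bounded operators on $C_{\infty}(0,\infty]$, and it satisfies the resolvent identity \eqref{resolventIdentityOp} on this common space; in particular the $R^{(q)}$ pairwise commute. Since $q\mapsto Z^{(q)}(0)$ is entire and not identically zero (indeed $Z^{(0)}(0)=1$, and $Z^{(q)}(0)\ge1$ for all $q\ge0$), its zero set $\cN$ is discrete, so $\bC\setminus\cN$ is open and connected.

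Then a pseudoresolvent argument finishes the first inclusion: fix $q_{1}>0$ and take any $q\in\bC\setminus\cN$. From \eqref{resolventIdentityOp} and $R^{(q_{1})}=(q_{1}-A)^{-1}$ one writes $R^{(q)}=R^{(q_{1})}\bigl(\mathrm{Id}+(q_{1}-q)R^{(q)}\bigr)$, which shows $\cR(R^{(q)})\subseteq\cD(A)$ and, applying $q_{1}-A$, that $AR^{(q)}f=qR^{(q)}f-f$, hence $(q-A)R^{(q)}=\mathrm{Id}$ on $C_{\infty}(0,\infty]$. For injectivity of $q-A$ on $\cD(A)$: if $(q-A)g=0$ then $(q_{1}-A)g=(q_{1}-q)g$, so $R^{(q_{1})}g=(q_{1}-q)^{-1}g$, and substituting this into $R^{(q)}=R^{(q_{1})}+(q_{1}-q)R^{(q_{1})}R^{(q)}$ (using commutativity) forces $R^{(q)}g=(q_{1}-q)^{-1}g+R^{(q)}g$, whence $g=0$. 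Therefore $q\in\rho(A)$ and $(q-A)^{-1}=R^{(q)}$, so $\sigma(A)\subseteq\cN$.

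For the reverse inclusion, fix $q_{0}\in\cN$; then $q_{0}\notin[0,\infty)$. First I would check that $Z^{(q_{0})}$, extended by $Z^{(q_{0})}(\infty):=1$, is a nonzero element of $C_{\infty}(0,\infty]$: it is bounded by \eqref{barWbound}, continuous in $x$, vanishes at $0$ because $Z^{(q_{0})}(0)=0$, and satisfies $\lim_{x\to\infty}Z^{(q_{0})}(x)=1$ since $\bar{W}(x,\infty)\downarrow0$ as $x\to\infty$ (monotonicity of $W$ in its first argument together with the entrance condition) and \eqref{barWbound}. Next, for $q>0$ the integral defining $R^{(q)}Z^{(q_{0})}$ is absolutely convergent by Proposition~\ref{prop:boundednessOfResolvent2} and boundedness of $Z^{(q_{0})}$; letting $z\to\infty$ in \eqref{resolventIdentityZ} (justified by dominated convergence via the uniform bounds of Proposition~\ref{prop:characterizationOfZ}) gives
\begin{align}
	\int_{(x,\infty)}W^{(q)}(x,u)Z^{(q_{0})}(u)\,m(du)=\frac{Z^{(q)}(x)-Z^{(q_{0})}(x)}{q-q_{0}},\n
\end{align}
whose value at $x=0$ is $Z^{(q)}(0)/(q-q_{0})$ because $Z^{(q_{0})}(0)=0$. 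Substituting these into the definition \eqref{ResolventDensityKilledAt0} of $r^{(q)}$ and splitting the integral according to $u<x$ or $u>x$ (recall $W^{(q)}(x,u)=0$ for $u<x$) yields
\begin{align}
	R^{(q)}Z^{(q_{0})}(x)=\frac{Z^{(q)}(x)}{Z^{(q)}(0)}\cdot\frac{Z^{(q)}(0)}{q-q_{0}}-\frac{Z^{(q)}(x)-Z^{(q_{0})}(x)}{q-q_{0}}=\frac{Z^{(q_{0})}(x)}{q-q_{0}}.\n
\end{align}
Thus $(q-q_{0})R^{(q)}Z^{(q_{0})}=Z^{(q_{0})}$; since $\cR(R^{(q)})=\cD(A)$ and $(q-A)R^{(q)}=\mathrm{Id}$ for $q>0$, this forces $Z^{(q_{0})}\in\cD(A)$ and $(q-A)Z^{(q_{0})}=(q-q_{0})Z^{(q_{0})}$, i.e.\ $AZ^{(q_{0})}=q_{0}Z^{(q_{0})}$. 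As $Z^{(q_{0})}\ne0$, $q_{0}$ is an eigenvalue, so $q_{0}\in\sigma(A)$. Combining the two inclusions gives $\sigma(A)=\cN$, $\rho(A)=\bC\setminus\cN$, $(q-A)^{-1}=R^{(q)}$ on $\rho(A)$, and $\sigma(A)$ comprising eigenvalues.

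The main obstacle I anticipate is the bookkeeping in the pseudoresolvent step — tracking domains carefully to extract both injectivity and surjectivity of $q-A$ from the resolvent identity alone — together with the justification of the interchange of $z\to\infty$ with the integral in \eqref{resolventIdentityZ}, which rests on the locally uniform bounds for $W^{(q)}$ and $Z^{(q_{0})}$; the auxiliary fact $\bar{W}(x,\infty)\downarrow0$ is routine but should be recorded explicitly.
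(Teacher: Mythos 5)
Your proposal is correct and follows essentially the same route as the paper: establish $\sigma(A)\subseteq\cN$ via the pseudo-resolvent identity \eqref{resolventIdentityOp} together with the fact that $R^{(q)}=(q-A)^{-1}$ for $q>0$, and then show each $q_{0}\in\cN$ is an eigenvalue by computing $R^{(q)}Z^{(q_{0})}=(q-q_{0})^{-1}Z^{(q_{0})}$ from \eqref{resolventIdentityZ}. The only differences are cosmetic — you unwind the pseudo-resolvent argument by hand where the paper invokes the standard facts that $\cR(R^{(q)})$, $\cN(R^{(q)})$ and $q-(R^{(q)})^{-1}$ are independent of $q$, and you explicitly record that $Z^{(q_{0})}$ is a nonzero element of $C_{\infty}(0,\infty]$, a point the paper leaves implicit.
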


\begin{proof}
	Since $(0,\infty) \subset \rho(A) \cap (\bC \setminus \cN)$, we see for $q > 0$ that $\cR(R^{(q)})$ is dense in $C_{\infty}(0,\infty]$, and $\cN(R^{(q)}) = \{0\}$.
	From Proposition \ref{prop:boundednessOfResolvent2} and \eqref{resolventIdentityOp}, the operators $(R^{(q)})_{q \in \bC \setminus \cN}$ are bounded and satisfy the resolvent identity.
	Note that such operators are called pseudo-resolvent (see e.g., \cite[Section 1.9]{MR710486}).
	It is easily observed from \eqref{resolventIdentityOp} that $\cR(R^{(q)})$ and $\cN(R^{(q)})$ do not depend on $q \in \bC \setminus \cN$.
	Thus, when $q \not\in \cN$, the inverse operator $(R^{(q)})^{-1}$ of $R^{(q)}$ exists.
	Again from \eqref{resolventIdentityOp}, it follows that the operator $q - (R^{(q)})^{-1}$ do not depend on $q \in \bC \setminus \cN$.
	Considering the case of $q > 0$, we see that $A = q - (R^{(q)})^{-1} \ (q \not\in \cN)$.
	Thus, for $q \in \bC \setminus \cN$ the operator $q-A$ has a bounded inverse, which shows $\sigma(A) \subset \cN$.
	We show that every $q \in \cN$ is an eigenvalue.
	Let $q \in \cN$ and let $r > 0$.
	From the identity \eqref{resolventIdentityZ}, it follows
	\begin{align}
		R^{(r)}Z^{(q)}(x) &= \frac{Z^{(r)}(x)}{Z^{(r)}(0)}\int_{(0,\infty)}W^{(r)}(0,u)Z^{(q)}(u)m(du) - \int_{(0,\infty)}W^{(r)}(x,u)Z^{(q)}(u))m(du) \label{} \\
		&= \frac{Z^{(r)}(x)}{Z^{(r)}(0)}\frac{Z^{(r)}(0)-Z^{(q)}(0)}{r-q} - \frac{Z^{(r)}(x) - Z^{(q)}(x)}{r-q} \label{} \\
		&= \frac{Z^{(q)}(x)}{r-q}. \label{} 
	\end{align}
	Thus, $Z^{(q)} \in \cR(R^{(r)}) = \cD(A)$.
	Since $AR^{(r)} = rR^{(r)} - \mathrm{Id}_{C_{\infty}(0,\infty]}$, it follows $A Z^{(q)}(x) = qZ^{(q)}(x)$.
	Hence, $q \in \sigma(A)$, and the proof is complete.
\end{proof}

We show the compactness of the resolvent.

\begin{Prop} \label{prop:compactness}
	For $q \in \rho(A)$, the operator $R^{(q)}$ is compact on $C_{\infty}(0,\infty]$.
\end{Prop}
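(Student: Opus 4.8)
The plan is to prove compactness via the Arzel\`a--Ascoli theorem, working on the compact metric space $[0,\infty]$ and regarding $C_{\infty}(0,\infty]$ as the closed subspace $\{ f \in C([0,\infty]) : f(0) = 0 \}$. Recall that $\rho(A) = \bC \setminus \cN$ by Theorem \ref{thm:SpectrumIsZeros2}. Fix $q \in \bC \setminus \cN$ and, using \eqref{ResolventDensityKilledAt0}, decompose $R^{(q)} = S^{(q)} - T^{(q)}$ with
\begin{align}
	S^{(q)}f(x) := \frac{Z^{(q)}(x)}{Z^{(q)}(0)} \int_{(0,\infty)} W^{(q)}(0,u) f(u) m(du), \qquad T^{(q)}f(x) := \int_{[x,\infty)} W^{(q)}(x,u) f(u) m(du),
\end{align}
where we used $W^{(q)}(x,u) = 0$ for $u < x$. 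By the estimates underlying Proposition \ref{prop:boundednessOfResolvent2} (in particular $|W^{(q)}(x,u)| \leq \mathrm{e}^{|q|\bar{W}(0,\infty)} W(0,u)$ for $0 \le x$, together with $\bar{W}(0,\infty) < \infty$ and the continuity of $Z^{(q)}$ on $[0,\infty]$), both $S^{(q)}$ and $T^{(q)}$ are bounded operators from $C_{\infty}(0,\infty]$ into $C([0,\infty])$. Since $S^{(q)}$ has rank one it is compact, so the task reduces to showing that $T^{(q)}$ is compact. (Alternatively, since $(R^{(q)})_{q \in \bC \setminus \cN}$ satisfies the resolvent identity \eqref{resolventIdentityOp}, one could reduce to a single value of $q$; but the argument below is uniform in $q$.)

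To see that $T^{(q)}$ maps the unit ball $B$ of $C_{\infty}(0,\infty]$ to a relatively compact subset of $C([0,\infty])$, I would verify uniform boundedness and equicontinuity. Uniform boundedness, and equicontinuity at $\infty$, both follow from
\begin{align}
	\sup_{f \in B} |T^{(q)}f(x)| \leq \mathrm{e}^{|q|\bar{W}(0,\infty)} \int_{[x,\infty)} W(0,u)\, m(du) \xrightarrow{x \to \infty} 0,
\end{align}
the limit being a consequence of $\bar{W}(0,\infty) < \infty$; note that each $T^{(q)}f$ extends continuously to $[0,\infty]$ with value $0$ at $\infty$ and value $\int_{(0,\infty)} W^{(q)}(0,u) f(u) m(du)$ at $0$. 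For equicontinuity at a point $x_{0} \in [0,\infty)$ and $x$ near $x_{0}$, I would split $T^{(q)}f(x) - T^{(q)}f(x_{0})$ into the integral over the interval between $x$ and $x_{0}$ and the integral over $[x \vee x_{0},\infty)$ of $[W^{(q)}(x,u) - W^{(q)}(x_{0},u)]f(u)$. The first is bounded, uniformly in $f \in B$, by $\int_{(x \wedge x_{0},\, x \vee x_{0}]} W^{(|q|)}(x \wedge x_{0}, u) m(du)$ (using $|W^{(q)}(x,u)| \leq W^{(|q|)}(x,u)$, which follows from \eqref{eq03}), and this tends to $0$ as $x \to x_{0}$: by \eqref{eq17} and Proposition \ref{prop:propertiesOfScaleFunc}(iii) when $x_{0} > 0$, and by $\bar{W}(0,\infty) < \infty$ when $x_{0} = 0$, with Remark \ref{rem:W(x,x)m(x)=0} disposing of any atom at $x_{0}$. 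The second is bounded, uniformly in $f \in B$, by $\int_{[x \vee x_{0},\infty)} |W^{(q)}(x,u) - W^{(q)}(x_{0},u)| m(du)$, which tends to $0$ by dominated convergence: the integrand converges to $0$ pointwise by the continuity of $x \mapsto W^{(q)}(x,u)$ on $\tilde{I}_{<u}$ (Proposition \ref{prop:propertiesOfScaleFunc}(ii), valid up to $x = 0$) and is dominated by $2\,\mathrm{e}^{|q|\bar{W}(0,\infty)} W(0,u)$, which is $m$-integrable on $(0,\infty)$ precisely because $\infty$ is entrance.

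The main obstacle is exactly this uniform control near the two boundaries: one must handle the entrance point $\infty$ and the killing point $0$ simultaneously, and the entrance condition $\bar{W}(0,\infty) < \infty$ is what makes it work, supplying the single $m$-integrable majorant $W(0,\cdot)$ (via \eqref{eq03}) with which dominated convergence can be run uniformly over $f \in B$, while \eqref{eq17} and Remark \ref{rem:W(x,x)m(x)=0} take care of the diagonal contributions. Granting the two estimates above, Arzel\`a--Ascoli shows that $T^{(q)}(B)$ is relatively compact in $C([0,\infty])$, hence $T^{(q)}$, and therefore $R^{(q)} = S^{(q)} - T^{(q)}$, is a compact operator from $C_{\infty}(0,\infty]$ into $C([0,\infty])$. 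Since $\cR(R^{(q)}) \subseteq C_{\infty}(0,\infty]$ by Proposition \ref{prop:boundednessOfResolvent2} and $C_{\infty}(0,\infty]$ is closed in $C([0,\infty])$, $R^{(q)}$ is compact as an operator on $C_{\infty}(0,\infty]$, as claimed.
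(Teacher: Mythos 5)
Your proof is correct and follows essentially the same route as the paper: Arzel\`a--Ascoli on $[0,\infty]$, with the $Z^{(q)}$-part controlled by the continuity of $Z^{(q)}$ and the $W^{(q)}$-part by dominated convergence against the majorant $W(0,\cdot)$, which is $m$-integrable by the entrance condition. The explicit rank-one/integral-operator decomposition and the separate treatment of the diagonal contribution are just a more detailed bookkeeping of the same two terms that appear in the paper's equicontinuity estimate.
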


\begin{proof}
	Let $(f_{n})_{n \geq 1}$ be a bounded sequence on $C_{\infty}(0,\infty]$.
	We apply the Ascoli-Arzel\`a theorem.
	The uniform boundedness readily follows from \eqref{eq32}.
	For $x,x' \geq 0$, we have from Proposition \ref{prop:defOfM} 
	\begin{align}
		|R^{(q)}f_{n}(x)-R^{(q)}f_{n}(x')| \leq & ||f_{n}||_{\infty}\bar{W}(0,\infty) \mathrm{e}^{|q|\bar{W}(0,\infty)} \frac{|Z^{(q)}(x)-Z^{(q)}(x')|}{|Z^{(q)}(0)|} \\
		&+ ||f_{n}||_{\infty} \int_{(0,\infty)}|W^{(q)}(x,u) - W^{(q)}(x',u)|m(du),
	\end{align}
	and the equicontinuity holds from the dominated convergence theorem.
\end{proof}

In the rest of this section, we study the infinitesimal generator of $(p_{t})_{t \geq 0}$ on an $L^{p}$-space ($p \in [1,\infty)$) when the underlying measure is a quasi-stationary distribution, and show that the spectra are still given by $\cN$.
For $p \in [1,\infty]$, we denote the $L^{p}$-space with respect to a Radon measure $\mu$ on $I$ by $L^{p}(\mu)$ and its norm by $||\cdot||_{L^{p}(\mu)}$.
For a given Radon measure $\mu$, it is generally difficult to determine whether the transition semigroup has an infinitesimal generator.
When a quasi-stationary distribution $\nu$ exists, however, the strong continuity on $L^{p}(\nu) \ (p \in [1,\infty))$ can be shown in a usual way, as follows.
Let us suppose a quasi-stationary $\nu$ on $I$ exists, and it satisfies $\bP_{\nu}[\tau_{0} > t] = \mathrm{e}^{-\alpha t} \ (t > 0)$ for some $\alpha > 0$.
We check that the semigroup $(p_{t})_{t \geq 0}$ is strongly continuous and contractive on $L^{p}(\nu) \ (p \in [1,\infty))$.
Since the distribution $\nu$ satisfies $\nu(p_{t}f) = \mathrm{e}^{-\alpha t}\nu(f) \ (f \in \cB_{b}(I))$ by definition, we have from Jensen's inequality for $f \in L^{p}(\nu)$ that
\begin{align}
	||p_{t}f(x)||_{L^{p}(\nu)}^{p} \leq \nu(p_{t}|f|^{p}) = \mathrm{e}^{-\alpha t}||f||_{L^{p}(\nu)}^{p}, \label{}
\end{align}
which shows $(p_{t})_{t \geq 0}$ is contractive on $L^{p}(\nu)$.
The strong continuity easily follows from the denseness of continuous functions in $L^{p}(\nu)$.
Hence, we can consider the infinitesimal generator $A_{p}$ on $L^{p}(\nu)$.
Note that since $\nu$ is a finite measure, we have $L^{p}(\nu) \subset L^{q}(\nu) \ (1 \leq q < p \leq \infty)$ and we can regard the operators $A, A_{p} \ (p \in (1,\infty))$ as restrictions of $A_{1}$.

To investigate detailed properties of $A_{p}$, we require the explicit form of the quasi-stationary distribution $\nu$.
In what follows, we assume condition (Q) in Section \ref{section:QSDprevious}.
Under this condition, as seen in Theorem \ref{thm:existenceOfQSD}, a unique quasi-stationary distribution $\nu_{\lambda_{0}}$ exists.
Henceforth, we denote by $A_{p} \ (p \in [1,\infty))$ the infinitesimal generator defined on $L^{p}(\nu_{\lambda_{0}})$.
We show that the resolvent of $A_{p}$ is still given by $R^{(q)}$.
We first check $R^{(q)} \ (q \not\in \cN)$ is a bounded operator on $L^{p}(\nu_{\lambda_{0}})$.

\begin{Prop} \label{prop:boundednessOfResolvent}
	For $q \in \bC \setminus \cN$ and $f \in L^{1}(\nu_{\lambda_{0}})$, we have
	\begin{align}
		\int_{(0,\infty)}|r^{(q)}(x,y)f(y)|m(dy) \leq \left(1 + \left| \frac{Z^{(q)}(x)}{Z^{(q)}(0)}\right| \right)\frac{\mathrm{e}^{|q+\lambda_{0}|/\lambda_{0}}}{\lambda_{0}}||f||_{L^{1}(\nu_{\lambda_{0}})} < \infty, \label{Eq06}
	\end{align}
	and the function $R^{(q)}f$ is well-defined.
	In particular, the operator $R^{(q)}$ can be defined as a bounded operator on $L^{p}(\nu_{\lambda_{0}}) \ (p \in [1,\infty])$.
	Moreover, $R^{(q)}f \in C_{\infty}(0,\infty]$ and satisfies \eqref{Eq01-2}.
\end{Prop}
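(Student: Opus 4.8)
The plan is to carry the proof of Proposition \ref{prop:boundednessOfResolvent2} over almost verbatim, but with the estimates anchored at $-\lambda_0$ rather than at $0$. Since $\nu_{\lambda_0}(du)=\lambda_0 W^{(-\lambda_0)}(0,u)m(du)$ by \eqref{QSD}, we have $\|f\|_{L^1(\nu_{\lambda_0})}=\lambda_0\int_{(0,\infty)}W^{(-\lambda_0)}(0,u)|f(u)|m(du)$, so the inequality \eqref{Eq06} is equivalent to the pointwise kernel bound $|r^{(q)}(x,u)|\le\bigl(1+|Z^{(q)}(x)/Z^{(q)}(0)|\bigr)\mathrm{e}^{|q+\lambda_0|/\lambda_0}W^{(-\lambda_0)}(0,u)$ for $x,u\in(0,\infty)$. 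Granting this, the remaining assertions follow exactly as in Proposition \ref{prop:boundednessOfResolvent2}: the map $x\mapsto 1+|Z^{(q)}(x)/Z^{(q)}(0)|$ is bounded on $(0,\infty)$ (by $\|Z^{(q)}\|_\infty<\infty$ from \eqref{barWbound} and $Z^{(q)}(0)\neq0$), so $R^{(q)}$ sends $L^1(\nu_{\lambda_0})$ boundedly into $L^\infty(\nu_{\lambda_0})$; finiteness of $\nu_{\lambda_0}$ then gives boundedness on $L^1(\nu_{\lambda_0})$ and on $L^\infty(\nu_{\lambda_0})$, hence on every $L^p(\nu_{\lambda_0})$ by Riesz--Thorin, while $R^{(q)}f\in C_\infty(0,\infty]$ and \eqref{Eq01-2} come from dominated convergence applied to $r^{(q)}(x,u)=\tfrac{Z^{(q)}(x)}{Z^{(q)}(0)}W^{(q)}(0,u)-W^{(q)}(x,u)$, with the $m$-integrable function $W^{(-\lambda_0)}(0,\cdot)|f|$ as dominating bound.

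The clean constant $\mathrm{e}^{|q+\lambda_0|/\lambda_0}$ reflects the fact that $-\lambda_0\in\cN$, i.e.\ $Z^{(-\lambda_0)}(0)=0$ (Proposition \ref{prop:invariantFunction}(iii)): by \eqref{eq31} and the positivity $W^{(-\lambda_0)}(0,\cdot)>0$ (Remark \ref{rem:positivityOfDensity}),
\[
\bar W^{(-\lambda_0)}(0,u)=\int_{(0,u)}W^{(-\lambda_0)}(0,v)m(dv)\le\bar W^{(-\lambda_0)}(0,\infty)=\tfrac{1}{\lambda_0}\bigl(1-Z^{(-\lambda_0)}(0)\bigr)=\tfrac{1}{\lambda_0},
\]
and more generally $\bar W^{(-\lambda_0)}(x,\infty)=\lambda_0^{-1}(1-Z^{(-\lambda_0)}(x))\le\lambda_0^{-1}$ for every $x$, since $Z^{(-\lambda_0)}(x)\in(0,1)$ by Proposition \ref{prop:invariantFunction}(i),(ii). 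This uniformly bounded total mass is what plays the role of $\bar W(0,\infty)$ in Proposition \ref{prop:boundednessOfResolvent2}, and it is fed in through the resolvent identities \eqref{resolventIdentityW} and \eqref{resolventIdentityZ} rewritten around $-\lambda_0$ (e.g.\ $W^{(q)}(0,u)=W^{(-\lambda_0)}(0,u)+(q+\lambda_0)\int_{(0,u)}W^{(-\lambda_0)}(0,v)W^{(q)}(v,u)\,m(dv)$ and its iterates) together with the analytically continued exit formulas of Propositions \ref{prop:exitProblem1} and \ref{prop:exitProblemZ}.

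The hard part will be the pointwise estimate itself. One cannot bound the two summands of $r^{(q)}(x,u)=\tfrac{Z^{(q)}(x)}{Z^{(q)}(0)}W^{(q)}(0,u)-W^{(q)}(x,u)$ separately against $W^{(-\lambda_0)}(0,u)$: already the ratio $W^{(q)}(0,u)/W^{(-\lambda_0)}(0,u)$ need not stay bounded as $u\to\infty$, so the $W^{(-\lambda_0)}$-controlled bound must come out of the near-cancellation of the two summands. For $u\le x$ this is harmless, since $W^{(q)}(x,u)=0$, $r^{(q)}(x,u)=\tfrac{Z^{(q)}(x)}{Z^{(q)}(0)}W^{(q)}(0,u)$, and $W^{(q)}(0,u)/W^{(-\lambda_0)}(0,u)$ stays bounded on the bounded region $(0,x]$ (both behaving like $W(0,u)$ as $u\downarrow0$). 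For $u\ge x$ I would use Proposition \ref{prop:exitProblem1} with $z=u$ (continued in $q$ by analyticity) to write $W^{(q)}(x,u)=W^{(q)}(0,u)\,\bE_x[\mathrm{e}^{-q\tau_0^-};\tau_0^-<\tau_u^+]$ and $\tfrac{Z^{(q)}(x)}{Z^{(q)}(0)}=\lim_{z\to\infty}\bE_x[\mathrm{e}^{-q\tau_0^-};\tau_0^-<\tau_z^+]$, so that after the strong Markov property at $\tau_u^+$
\[
r^{(q)}(x,u)=W^{(q)}(0,u)\,\frac{Z^{(q)}(u)}{Z^{(q)}(0)}\,\bE_x\!\bigl[\mathrm{e}^{-q\tau_u^+};\ \tau_u^+<\tau_0^-\bigr],
\]
and then estimate the (possibly growing) factor $W^{(q)}(0,u)\,Z^{(q)}(u)/Z^{(q)}(0)$ together with the (decaying) hitting term against $W^{(-\lambda_0)}(0,u)$, using the resolvent identities and the mass bound $\bar W^{(-\lambda_0)}(\cdot,\infty)\le1/\lambda_0$ jointly rather than term by term. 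Turning this cancellation into the stated inequality with precisely that constant is the crux of the argument; a weaker form of the pointwise bound, with $1+|Z^{(q)}(x)/Z^{(q)}(0)|$ replaced by a finite $C(q,x)$, would already follow from continuity and the large-$u$ asymptotics of $u\mapsto r^{(q)}(x,u)/W^{(-\lambda_0)}(0,u)$ and suffices for the later functional-analytic consequences (compactness, and the spectral identification $\sigma(A_p)=\cN$).
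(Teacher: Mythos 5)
There is a genuine gap: the inequality \eqref{Eq06}, which is the substance of the proposition, is never established. You correctly reduce it to the pointwise kernel bound $|r^{(q)}(x,u)|\le\bigl(1+|Z^{(q)}(x)/Z^{(q)}(0)|\bigr)\mathrm{e}^{|q+\lambda_0|/\lambda_0}W^{(-\lambda_0)}(0,u)$, and you correctly isolate the mass bound $\int_{(x,\infty)}W^{(-\lambda_0)}(x,v)m(dv)=\lambda_0^{-1}(1-Z^{(-\lambda_0)}(x))\le 1/\lambda_0$. But you then assert that the two summands of $r^{(q)}$ cannot be bounded separately because ``the ratio $W^{(q)}(0,u)/W^{(-\lambda_0)}(0,u)$ need not stay bounded as $u\to\infty$'', and you replace the direct estimate by a cancellation argument that you explicitly leave unfinished (``the crux''). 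That dismissal is incorrect, and it is exactly where the actual proof lives: iterating the perturbation identity you yourself write down gives the expansion $W^{(q)}(x,y)=\sum_{n\ge0}(q+\lambda_0)^n\bigl(W^{(-\lambda_0)}\bigr)^{\otimes(n+1)}(x,y)$, and the factorial estimate $\bigl(W^{(-\lambda_0)}\bigr)^{\otimes(n+1)}(x,y)\le\frac{W^{(-\lambda_0)}(x,y)}{n!}\bigl(\int_{(x,y)}W^{(-\lambda_0)}(x,v)m(dv)\bigr)^{n}$ --- the analogue of \eqref{eq01} with $W$ replaced by $W^{(-\lambda_0)}$, available because $0\le W^{(-\lambda_0)}\le W$ by \eqref{resolventIdentityW} and Remark \ref{rem:positivityOfDensity} --- combined with the mass bound yields $|W^{(q)}(x,y)|\le W^{(-\lambda_0)}(x,y)\,\mathrm{e}^{|q+\lambda_0|/\lambda_0}$ uniformly. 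In particular the ratio you worry about is bounded by $\mathrm{e}^{|q+\lambda_0|/\lambda_0}$ for every $u$, the two summands are handled term by term exactly as in Proposition \ref{prop:boundednessOfResolvent2}, and no cancellation between them is needed.

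The surrounding material in your write-up (the identity $\|f\|_{L^1(\nu_{\lambda_0})}=\lambda_0\int W^{(-\lambda_0)}(0,u)|f(u)|m(du)$, the passage to $L^p$ by interpolation between $L^1$ and $L^\infty$, dominated convergence for the continuity of $R^{(q)}f$ and for \eqref{Eq01-2}) is fine and matches the intended argument. But the proof of the core estimate is missing, and the route you sketch in its place --- the strong Markov factorization of $r^{(q)}(x,u)$ for $u\ge x$, analytically continued to complex $q$ --- is both harder than necessary and itself unjustified at the level of detail you give.
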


\begin{proof}
	It is easy to see that if we have, for $q \in \bC$ and $x,y \in [0,\infty]$,
	\begin{align}
		W^{(-\lambda_{0})}(x,y) \leq W(x,y)
		\quad \text{and} \quad 
		|W^{(q)}(x,y)| \leq W^{(-\lambda_{0})}(x,y) \mathrm{e}^{|q+\lambda_{0}|\int_{(x,y)}W^{(-\lambda_{0})}(x,u)m(du)},
	\end{align}
	the desired result follows from almost the same argument as Proposition \ref{prop:boundednessOfResolvent2}.
	The former readily follows from Proposition \ref{prop:resolventIdentityW} and Remark \ref{rem:positivityOfDensity}.
	The latter can also be checked from Proposition \ref{prop:resolventIdentityW}.
	Indeed, it gives the expansion
	\begin{align}
		W^{(q)}(x,y) = \sum_{n \geq 0}(q+\lambda_{0})^{n}\left(W^{(-\lambda_{0})}\right)^{\otimes (n+1)}(x,y),
	\end{align}
	whose convergence is ensured by the following elementary estimate: for $n \geq 1$
	\begin{align}
		\left(W^{(-\lambda_{0})}\right)^{\otimes (n)}(x,y) \leq \frac{W^{(-\lambda_{0})}(x,y)}{(n-1)!} \left(\int_{(x,y)}W^{(-\lambda_{0})}(x,u)m(du)\right)^{n-1}. \label{Eq34}
	\end{align}
	Since \eqref{Eq34} can be routinely confirmed, we omit the detail.
\end{proof}

Once we know the operator $R^{(q)} \ (q \in \bC \setminus \cN)$ is bounded on $L^{p}(\nu_{\lambda_{0}})$ and maps $L^{p}(\nu_{\lambda_{0}})$ to $C_{\infty}(0,\infty]$, we readily obtain the following by exactly the same argument with Theorem \ref{thm:SpectrumIsZeros2}, and we omit the proof.

\begin{Prop} \label{prop:SpectrumIsZeros}
	For $p \in [1,\infty)$, we have $\sigma(A_{p}) = \cN$ and $(q - A)^{-1} = R^{(q)} \ (q \in \rho(A_{p}))$.
	Moreover, for $q \in \sigma(A_{p})$, we have $Z^{(q)} \in \cD(A_{p})$ and $A_{p}Z^{(q)} = q Z^{(q)}$, that is, $\sigma(A_{p})$ comprises eigenvalues.
\end{Prop}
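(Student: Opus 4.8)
The plan is to transcribe the proof of Theorem~\ref{thm:SpectrumIsZeros2}, replacing $C_{\infty}(0,\infty]$ by $L^{p}(\nu_{\lambda_{0}})$ throughout, and to check that each step of that argument has an $L^{p}$-analogue. The four ingredients one needs are: (a) that $(p_{t})_{t \geq 0}$ is a strongly continuous contraction semigroup on $L^{p}(\nu_{\lambda_{0}})$ with generator $A_{p}$, which has already been established above from Jensen's inequality and the density of continuous functions in $L^{p}(\nu_{\lambda_{0}})$; (b) that for $q \in \bC \setminus \cN$ the integral operator $R^{(q)}$ is bounded on $L^{p}(\nu_{\lambda_{0}})$, which is Proposition~\ref{prop:boundednessOfResolvent}, the sup-norm bound obtained there dominating $||\cdot||_{L^{p}(\nu_{\lambda_{0}})}$ since $\nu_{\lambda_{0}}$ is a probability measure; (c) that the resolvent identity \eqref{resolventIdentityOp} holds on $L^{p}(\nu_{\lambda_{0}})$, which follows from \eqref{resolventIdentityDensity}; and (d) that for $q > 0$ the operator $R^{(q)}$ coincides with the genuine resolvent $(q - A_{p})^{-1}$.

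For (d) I would argue that, by \eqref{potentialDensityOneside}, the integral operator against $r^{(q)}$ agrees with $f \mapsto \int_{0}^{\infty}\mathrm{e}^{-qt}p_{t}f\,dt$ for bounded $f$, and that, using the domination \eqref{Eq06} together with the density of bounded functions in $L^{p}(\nu_{\lambda_{0}})$, this identity extends to all $f \in L^{p}(\nu_{\lambda_{0}})$; since the right-hand side is the resolvent of the $C_{0}$-semigroup $(p_{t})_{t \geq 0}$, it follows that $\cR(R^{(q)}) = \cD(A_{p})$ is dense and $\cN(R^{(q)}) = \{0\}$ for $q > 0$.

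Granting (a)--(d), the pseudo-resolvent bookkeeping is then verbatim: by (b) and (c) the family $(R^{(q)})_{q \in \bC \setminus \cN}$ is a pseudo-resolvent, so $\cR(R^{(q)})$ and $\cN(R^{(q)})$ are independent of $q$ and, by (d), equal $\cD(A_{p})$ and $\{0\}$; hence $R^{(q)}$ is invertible for each $q \notin \cN$, the operator $q - (R^{(q)})^{-1}$ is independent of $q$, and comparison with the case $q > 0$ identifies it with $A_{p}$. Thus $R^{(q)}$ is a bounded inverse of $q - A_{p}$ for every $q \in \bC \setminus \cN$, which gives $\sigma(A_{p}) \subset \cN$ and $(q - A_{p})^{-1} = R^{(q)}$ on $\rho(A_{p})$. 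For the reverse inclusion I would fix $q \in \cN$, note that $Z^{(q)}$ is bounded by \eqref{barWbound}, hence lies in $L^{p}(\nu_{\lambda_{0}})$ (a finite measure) and is not identically zero (e.g.\ because $Z^{(q)}(x) \to 1$ as $x \to \infty$, since $\bar{W}(x,\infty) \to 0$), and then reuse the computation in the proof of Theorem~\ref{thm:SpectrumIsZeros2}, which only invokes \eqref{resolventIdentityZ}, to obtain $R^{(r)}Z^{(q)} = (r - q)^{-1}Z^{(q)}$ for $r > 0$. Therefore $Z^{(q)} \in \cR(R^{(r)}) = \cD(A_{p})$ and, from $A_{p}R^{(r)} = rR^{(r)} - \mathrm{Id}_{L^{p}(\nu_{\lambda_{0}})}$, $A_{p}Z^{(q)} = qZ^{(q)}$, so $q$ is an eigenvalue of $A_{p}$ and $\sigma(A_{p}) = \cN$.

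The one step that is more than a mechanical translation, and hence the main thing to get right, is (d): one must verify that the operator attached a priori to the meromorphic kernel $r^{(q)}$ genuinely represents the $L^{p}(\nu_{\lambda_{0}})$-resolvent of $A_{p}$, and not merely a pointwise identity valid for bounded functions — this is precisely where the quantitative estimate \eqref{Eq06} and the finiteness of $\nu_{\lambda_{0}}$ are doing the work. Everything else is formally identical to the $C_{\infty}(0,\infty]$ case, which is why the proof can be omitted.
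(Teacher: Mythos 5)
Your proposal is correct and takes essentially the same route as the paper, which omits the proof precisely on the grounds that, once Proposition \ref{prop:boundednessOfResolvent} gives boundedness of $R^{(q)}$ on $L^{p}(\nu_{\lambda_{0}})$, the argument of Theorem \ref{thm:SpectrumIsZeros2} transfers verbatim. Your step (d) — identifying the kernel operator with the Hille--Yosida resolvent of $A_{p}$ for $q>0$ via \eqref{potentialDensityOneside}, \eqref{Eq06} and density of bounded functions — is exactly the point the paper leaves implicit, and you fill it in correctly.
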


The following is compactness of the resolvent on $L^{p}(\nu_{\lambda_{0}})$.
We are not certain whether it is also valid for $p=1$.

\begin{Prop} \label{prop:compactness2}
	For $q \in \bC \setminus \cN$ and $p \in (1,\infty]$, the operator $R^{(q)}$ is compact as an operator from $L^{p}$ to $C_{\infty}(0,\infty]$.
\end{Prop}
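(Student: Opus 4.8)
The plan is to run an Ascoli--Arzelà argument analogous to that of Proposition \ref{prop:compactness}, but on $L^{p}(\nu_{\lambda_{0}})$ and with Hölder's inequality in place of the sup-norm estimate. Since $C_{\infty}(0,\infty]$ may be identified with $\{g\in C([0,\infty]):g(0)=0\}$ and $[0,\infty]$ is compact, it suffices to show that $\{R^{(q)}f:\|f\|_{L^{p}(\nu_{\lambda_{0}})}\le1\}$ is a uniformly bounded, equicontinuous family of functions on $[0,\infty]$, each (hence also every uniform limit) vanishing at $0$. Write $m(du)=(\lambda_{0}W^{(-\lambda_{0})}(0,u))^{-1}\nu_{\lambda_{0}}(du)$ and set $\tilde r^{(q)}(x,u):=r^{(q)}(x,u)/(\lambda_{0}W^{(-\lambda_{0})}(0,u))$, so that $R^{(q)}f(x)=\int_{(0,\infty)}\tilde r^{(q)}(x,u)f(u)\,\nu_{\lambda_{0}}(du)$ and, taking the supremum over $\|f\|_{L^{1}(\nu_{\lambda_{0}})}\le1$ in Proposition \ref{prop:boundednessOfResolvent}, $\sup_{x\in(0,\infty]}\|\tilde r^{(q)}(x,\cdot)\|_{L^{\infty}(\nu_{\lambda_{0}})}\le M_{q}:=(1+\|Z^{(q)}\|_{\infty}/|Z^{(q)}(0)|)\,\mathrm{e}^{|q+\lambda_{0}|/\lambda_{0}}/\lambda_{0}<\infty$, where $\|Z^{(q)}\|_{\infty}<\infty$ by \eqref{barWbound}. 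For $p\in(1,\infty]$ with conjugate exponent $p'\in[1,\infty)$, the constant $M_{q}$ lies in $L^{p'}(\nu_{\lambda_{0}})$ ($\nu_{\lambda_{0}}$ being finite), and Hölder's inequality gives
\[
|R^{(q)}f(x)-R^{(q)}f(x')|\le \|\tilde r^{(q)}(x,\cdot)-\tilde r^{(q)}(x',\cdot)\|_{L^{p'}(\nu_{\lambda_{0}})}\,\|f\|_{L^{p}(\nu_{\lambda_{0}})}\qquad(x,x'\in[0,\infty]).
\]
Uniform boundedness is immediate (take $x'=0$ and recall $R^{(q)}f(0)=0$, as $R^{(q)}f\in C_{\infty}(0,\infty]$ by Proposition \ref{prop:boundednessOfResolvent}); by the displayed bound, equicontinuity will follow once $x\mapsto\tilde r^{(q)}(x,\cdot)$ is known to be continuous from the compact space $[0,\infty]$ into $L^{p'}(\nu_{\lambda_{0}})$, continuity on a compact set being automatically uniform.

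To prove this continuity I would use dominated convergence in the variable $u$: the constant $M_{q}$ dominates $|\tilde r^{(q)}(x,u)|$ for all $x$, so it is enough that $x\mapsto\tilde r^{(q)}(x,u)$ be continuous on $[0,\infty]$ for $\nu_{\lambda_{0}}$-almost every $u$. From $r^{(q)}(x,u)=\tfrac{Z^{(q)}(x)}{Z^{(q)}(0)}W^{(q)}(0,u)-W^{(q)}(x,u)$ this reduces to two facts: first, $x\mapsto Z^{(q)}(x)$ is continuous on $(0,\infty]$ with $\lim_{x\to\infty}Z^{(q)}(x)=1$, which follows from the series expansion in Theorem \ref{thm:Zexpansion} and $\bar W(0,\infty)<\infty$ (the terms are continuous in $x$ by Proposition \ref{prop:propertiesOfScaleFunc}(ii) and dominated convergence, the series converging uniformly by Proposition \ref{prop:characterizationOfZ}); second, for each fixed $u$ the map $x\mapsto W^{(q)}(x,u)$ is continuous on $[0,u)$ by Proposition \ref{prop:propertiesOfScaleFunc}(ii) and vanishes for $x>u$, the single value $x=u$ being negligible since $W^{(q)}(u,u)\,m\{u\}=0$ by Remark \ref{rem:W(x,x)m(x)=0}. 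In particular $\tilde r^{(q)}(x,u)\to0$ as $x\downarrow0$ and $\tilde r^{(q)}(x,u)\to \tfrac{W^{(q)}(0,u)}{Z^{(q)}(0)\,\lambda_{0}W^{(-\lambda_{0})}(0,u)}$ as $x\to\infty$ (for a.e.\ $u$), consistently with \eqref{Eq01-2}, and dominated convergence promotes all of these to convergence in $L^{p'}(\nu_{\lambda_{0}})$.

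Ascoli--Arzelà then yields that $\{R^{(q)}f:\|f\|_{L^{p}(\nu_{\lambda_{0}})}\le1\}$ is relatively compact in $C([0,\infty])$; since every member, and hence every uniform limit, vanishes at $0$, it is relatively compact in $C_{\infty}(0,\infty]$, i.e.\ $R^{(q)}:L^{p}(\nu_{\lambda_{0}})\to C_{\infty}(0,\infty]$ is compact. The case $p=\infty$ is included ($p'=1<\infty$; alternatively, $L^{\infty}(\nu_{\lambda_{0}})\hookrightarrow L^{2}(\nu_{\lambda_{0}})$ continuously as $\nu_{\lambda_{0}}$ is finite, and one composes with the $p=2$ case). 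I expect the $L^{p'}$-continuity of $x\mapsto\tilde r^{(q)}(x,\cdot)$ to be the only genuine work, and it is exactly there that $p>1$ is needed: for $p=1$ one would require $\|\tilde r^{(q)}(x,\cdot)-\tilde r^{(q)}(x',\cdot)\|_{L^{\infty}(\nu_{\lambda_{0}})}\to0$, which fails because the ``kink'' of $u\mapsto r^{(q)}(x,u)$ at $u=x$ moves with $x$, so the family need not be equicontinuous — this is why $p=1$ is left open.
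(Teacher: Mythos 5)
Your argument is correct and is essentially the paper's own proof: uniform boundedness from the $L^{1}(\nu_{\lambda_{0}})$ bound of Proposition \ref{prop:boundednessOfResolvent}, equicontinuity on the compact space $[0,\infty]$ via H\"older's inequality with exponent $p'<\infty$ plus dominated convergence (which is exactly where $p>1$ enters, as in the paper), and Ascoli--Arzel\`a. The only cosmetic difference is that you absorb the density of $\nu_{\lambda_{0}}$ into the kernel and apply H\"older to $\tilde r^{(q)}$ as a whole, whereas the paper splits off the $Z^{(q)}$-term and treats the $W^{(q)}$-difference against the weight $W^{(-\lambda_{0})}(0,u)^{1/(p-1)}$; the content is the same.
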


\begin{proof}
	The case $p = \infty$ follows from the same argument as Proposition \ref{prop:compactness}.
	Thus, we show the case $p \in (1,\infty)$.
	Let $(f_{n})_{n \geq 1}$ be an $L^{p}(\nu_{\lambda_{0}})$-bounded sequence.
	Since the uniform boundedness follows from \eqref{Eq06},
	it suffices to check the equicontinuity.
	From H\"older's inequality and a similar computation in Proposition \ref{prop:boundednessOfResolvent}, we have
	\begin{align}
		&|R^{(q)}f_{n}(x) - R^{(q)}f_{n}(x')| \label{} \\
		\leq &\frac{|Z^{(q)}(x) - Z^{(q)}(x')|}{|Z^{(q)}(0)|}\frac{\mathrm{e}^{|q+\lambda_{0}|/\lambda_{0}}}{\lambda_{0}}||f_{n}||_{L^{1}(\nu_{\lambda_{0}})} \label{} \\
		&+ \left(\int_{(0,\infty)}\frac{|W^{(q)}(x,u) - W^{(q)}(x',u)|^{p/(p-1)}}{W^{(-\lambda_{0})}(0,u)^{1/(p-1)}}m(du)\right)^{1-1/p}\frac{||f_{n}||_{L^{p}(\nu_{\lambda_{0}})}}{\lambda_{0}}. \label{Eq10}
	\end{align}
	We similarly see that $|W^{(q)}(x,y) - W^{(q)}(x',y)| \leq 2W^{(-\lambda_{0})}(0,y)\mathrm{e}^{|q+\lambda_{0}|/\lambda_{0}}$, which implies the integral in \eqref{Eq10} is finite.
	By the dominated convergence theorem, we have
	\begin{align}
		\lim_{x \to x'}\int_{(0,\infty)}\frac{|W^{(q)}(x,u) - W^{(q)}(x',u)|^{p/(p-1)}}{W^{(-\lambda_{0})}(0,u)^{1/(p-1)}}m(du) = 0. \label{}
	\end{align}
	Hence, the equicontinuity follows, and the proof is complete.
\end{proof}

\section{Exponential convergence to the Yaglom limit} \label{section:exponentialConvergence}

We maintain the assumptions in the previous section,
that is, we assume that (A1)-(A3), \eqref{instantaneousEntrance}, $\bP_{x}[\tau_{0} < \infty] > 0 \ (x > 0)$, and $\zeta = \tau_{0}$ hold and $X$ is a Feller process on $I = (0,\infty]$.
In this section, we study the convergence rate of the Yaglom limit in \eqref{eq68}.
The case in which $\ell_{2}$ is accessible is addressed in Appendix \ref{appendix:accessible}. 

In the rest of this section, we assume the following condition:
\begin{description}
	\item[(SF)] For every $f \in \cB_{b}(I)$ and $t > 0$, the function $(0,\infty) \ni x \mapsto p_{t}f(x)$ is bounded and continuous.
\end{description}

Note that in this condition, the continuity of $p_{t}f(x)$ at $x = \infty$ is not required, while it automatically follows. 

\begin{Prop} \label{prop:strongerRegularity}
	For every $f \in \cB_{b}(I)$ and $t > 0$, we have $p_{t}f \in C_{\infty}(0,\infty]$. 
\end{Prop}

\begin{proof}
	Fix $t > 0$.
	We first show $\lim_{x \to 0}p_{t}f(x) = 0$.
	It suffices to show that $\lim_{x \to 0}p_{t}1(x) = 0$.
	The function $x \mapsto p_{t}1(x)$ is non-decreasing by the absence of negative jumps.
	For $q > 0$ and $s > 0$, the dominated convergence theorem gives
	\begin{align}
		\int_{0}^{\infty}\mathrm{e}^{-qs}\lim_{x \to 0}p_{s}1(x)ds = \int_{(0,\infty)}\lim_{x \to 0}\left(\frac{Z^{(q)}(x)}{Z^{(q)}(0)}W^{(q)}(0,u) - W^{(q)}(x,u)\right) m(du) = 0,
	\end{align}
	and we have $\lim_{x \to 0}p_{s}1(x) = 0$ for almost every $s \in (0,\infty)$, and this is extended to every $s \in (0,\infty)$ by monotonicity.

	Set $g(x) := p_{t/2}f(x) \ (x \in (0,\infty))$.
	From the above argument, (SF), and Proposition \ref{prop:limitAtInfinity}, we see that $g \in C_{\infty}(0,\infty]$.
	From the Feller property, it follows $p_{t}f = p_{t/2}g \in C_{\infty}(0,\infty]$.
\end{proof}

To ensure the existence of a quasi-stationary distribution, we use condition (Q).
Note that the latter condition in (Q) is automatically satisfied since it follows from $\zeta = \tau_{0}$, \eqref{potentialDensityOneside} and Proposition \ref{prop:propertiesOfScaleFunc} (iii) that
\begin{align}
	\bE_{x}[\tau_{0}] = \int_{(0,\infty)}W(0,u)m(du) - \int_{(0,\infty)}W(x,u)m(du) < \infty.
\end{align}
Thus, we assume the former condition $\bP_{x}[\tau_{y} < \infty] \ (x,y \in (0,\infty))$ of (Q) in the rest of this section.
Under the condition, from Theorem \ref{thm:existenceOfQSD} the decay parameter $\lambda_{0}$ in \eqref{decayParameter} is strictly positive, and 
there exists a unique quasi-stationary distribution $\nu_{\lambda_{0}}$ in \eqref{QSD}.
Also, note that $-\lambda_{0} \in \cN$ from Proposition \ref{prop:invariantFunction} (iii).

The following is the main result of this section, which shows that the convergence to the Yaglom limit \eqref{eq68} is exponentially fast.
Moreover, the convergence is uniform with respect to a class of initial distributions (see Remark \ref{rem:uniformConvergence}).

\begin{Thm} \label{thm:rateOfCovergence}
	Let $\lambda_{1} := -\sup\{ \Re q \mid q \in \cN \setminus \{-\lambda_{0} \} \}$.
	Then the spectral gap exists: $\lambda_{1} > \lambda_{0}$.
	For any $\delta \in (0,\lambda_{1}-\lambda_{0})$, there exists $M_{\delta} \in (0,\infty)$ and we have
	\begin{align}
		\sup_{f \in \cB_{b}(I), ||f||_{\infty} \leq 1}|| \mathrm{e}^{\lambda_{0}t} p_{t}f - \pi_{\lambda_{0}}f ||_{\infty} \leq M_{\delta}\mathrm{e}^{-\delta t} \quad (t > 0), \label{Eq11}
	\end{align}
	where $\pi_{\lambda_{0}}$ is the projection defined by
	\begin{align}
		\pi_{\lambda_{0}}f(x) := \frac{1}{\rho\lambda_{0}}Z^{(-\lambda_{0})}(x)\int_{I}f(y)\nu_{\lambda_{0}}(dy). \label{}
	\end{align}
	This implies that the convergence to the Yaglom limit is exponentially fast in the total variation distance: for every distribution $\mu$ on $(0,\infty]$
	\begin{align}
		\sup_{f \in \cB_{b}(I), ||f||_{\infty} \leq 1}|\bE_{\mu}[f(X_{t}) \mid \tau_{0} > t] - \nu_{\lambda_{0}}(f)| \leq
		\frac{2M_{\delta}\mathrm{e}^{-\delta t}}{\mu(Z^{(-\lambda_{0})})} \quad (t > 0). \label{Eq12} 
	\end{align}
\end{Thm}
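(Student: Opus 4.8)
The plan is to deduce Theorem \ref{thm:rateOfCovergence} from the spectral and compactness information already established for the resolvent, following the strategy of \cite{SchillingWang} and \cite{KolbSavov}. First I would use the strong Feller property together with Proposition \ref{prop:strongerRegularity} to upgrade the semigroup: for each fixed $t_{0} > 0$, the operator $p_{t_{0}}$ maps bounded measurable functions into $C_{\infty}(0,\infty]$, and I claim it is a compact operator on $C_{\infty}(0,\infty]$. The standard argument is to write $p_{t_{0}} = p_{t_{0}/2} \circ p_{t_{0}/2}$ and observe that $p_{t_{0}/2}$, composed with the resolvent which is already known to be compact (Proposition \ref{prop:compactness}), yields compactness; more precisely one uses that $p_{t_{0}/2}$ sends the unit ball of $C_{\infty}(0,\infty]$ into a set on which $p_{t_{0}/2}$ acts with uniform equicontinuity, exactly as in \cite[Theorem 1.2]{SchillingWang}. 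Since $p_{t}$ commutes with $p_{t_{0}}$ and with the resolvent, the compactness of $p_{t_{0}}$ for one $t_{0}$ propagates to all $t > 0$, and the spectrum of $p_{t}$ on $C_{\infty}(0,\infty]$ is a discrete set of eigenvalues accumulating only at $0$.

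Next I would relate the spectrum of $p_{t}$ to that of the generator $A$. By Theorem \ref{thm:SpectrumIsZeros2}, $\sigma(A) = \cN$ consists of eigenvalues, and by the spectral mapping property for compact semigroups (valid since $p_{t}$ is eventually compact, see e.g.\ \cite[Chapter IV]{MR710486}) one has $\sigma(p_{t}) \setminus \{0\} = \mathrm{e}^{t\sigma(A)} = \{\mathrm{e}^{tq} \mid q \in \cN\}$. The eigenvalue of largest modulus corresponds to $q = -\lambda_{0}$, which by Proposition \ref{prop:invariantFunction}(iii) is a simple root of $q \mapsto Z^{(q)}(0)$, hence a simple pole of the resolvent, hence a simple eigenvalue of $A$ with eigenfunction $Z^{(-\lambda_{0})}$. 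The discreteness of $\cN$ then forces $\lambda_{1} := -\sup\{\Re q \mid q \in \cN \setminus \{-\lambda_{0}\}\} > \lambda_{0}$: if there were a sequence $q_{n} \in \cN$ with $\Re q_{n} \to -\lambda_{0}$, the corresponding eigenvalues $\mathrm{e}^{tq_{n}}$ of $p_{t}$ would have modulus bounded below, contradicting accumulation of $\sigma(p_{t})$ only at $0$; and no $q \in \cN$ can have $\Re q > -\lambda_{0}$ by the definition of $\lambda_{0}$ as decay parameter and Theorem \ref{thm:existenceOfQSD}. This gives the spectral gap.

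With the spectral gap in hand, the exponential estimate \eqref{Eq11} follows from the Dunford functional calculus / Riesz projection decomposition. I would write $\mathrm{e}^{\lambda_{0}t}p_{t} = \pi_{\lambda_{0}} + \mathrm{e}^{\lambda_{0}t}p_{t}(\mathrm{Id} - \pi_{\lambda_{0}})$, where $\pi_{\lambda_{0}}$ is the spectral (Riesz) projection onto the one-dimensional eigenspace spanned by $Z^{(-\lambda_{0})}$; identifying this projection explicitly using Proposition \ref{prop:invariantFunction}(iii) (the residue $\rho$) and \eqref{Eq01-2} gives the stated formula $\pi_{\lambda_{0}}f(x) = (\rho\lambda_{0})^{-1}Z^{(-\lambda_{0})}(x)\nu_{\lambda_{0}}(f)$. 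On the complementary invariant subspace $(\mathrm{Id}-\pi_{\lambda_{0}})C_{\infty}(0,\infty]$ the semigroup has spectral radius $\mathrm{e}^{-\lambda_{1}t}$ at each fixed time, and since the semigroup is eventually norm-continuous (indeed eventually compact), the spectral radius equals the growth bound; hence $\|\mathrm{e}^{\lambda_{0}t}p_{t}(\mathrm{Id}-\pi_{\lambda_{0}})\| \leq M_{\delta}\mathrm{e}^{-\delta t}$ for any $\delta < \lambda_{1}-\lambda_{0}$. Taking the supremum over $f$ with $\|f\|_{\infty} \leq 1$ (first for $f \in C_{\infty}(0,\infty]$, then extending to all bounded measurable $f$ via $p_{t} = p_{t/2}p_{t/2}$ and Proposition \ref{prop:strongerRegularity}) yields \eqref{Eq11}. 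Finally, \eqref{Eq12} is a routine consequence: writing $\bE_{\mu}[f(X_{t}) \mid \tau_{0} > t] = \mu(p_{t}f)/\mu(p_{t}1)$, applying \eqref{Eq11} to both numerator and denominator (the denominator using $f \equiv 1$, noting $\nu_{\lambda_{0}}(1) = 1$), and using that $\mu(\pi_{\lambda_{0}}1) = \mu(Z^{(-\lambda_{0})})/(\rho\lambda_{0})$ bounds the denominator from below, gives the bound with constant $2M_{\delta}/\mu(Z^{(-\lambda_{0})})$ after the standard algebraic manipulation $|a/b - c| \leq (|a-cb|)/b \leq \ldots$.

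The main obstacle I anticipate is establishing the compactness of $p_{t_{0}}$ on $C_{\infty}(0,\infty]$ cleanly under (SF) alone: one must control the behaviour near the boundary point $0$ (the cemetery) and near $\infty$ simultaneously, showing that the image of a bounded set under $p_{t_{0}}$ is not only equicontinuous on compact subsets of $(0,\infty)$ but genuinely relatively compact in $C_{\infty}(0,\infty]$ — i.e.\ the functions vanish uniformly near $0$ and converge uniformly near $\infty$. The vanishing near $0$ uses the first part of Proposition \ref{prop:strongerRegularity} together with a uniform (in the bounded family) version of that estimate via the explicit resolvent density, and the uniform control near $\infty$ uses the instantaneous entrance condition \eqref{instantaneousEntrance} through Proposition \ref{prop:limitAtInfinity} and Lemma \ref{lem:convergenceOfHittingDistribution}. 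Getting these uniformities precisely — rather than merely pointwise — is the technical heart of the argument and is exactly where the entrance condition is doing work that the compact-state-space setting of \cite{KolbSavov} gets for free.
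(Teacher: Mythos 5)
Your overall strategy coincides with the paper's: compactness of $p_{t}$ on $C_{\infty}(0,\infty]$ via the Schilling--Wang argument under (SF), then the general spectral theory of eventually compact positive semigroups to produce the gap and the exponential estimate on $C_{\infty}(0,\infty]$, identification of $\pi_{\lambda_{0}}$ as the residue of the resolvent at $q=-\lambda_{0}$, extension from $C_{\infty}(0,\infty]$ to $\cB_{b}(I)$ by one application of the strongly Feller semigroup, and the same algebraic manipulation for \eqref{Eq12}. There is, however, a genuine hole in your justification of the spectral gap. You argue that the discreteness of $\cN$ (equivalently, that $\sigma(p_{t})$ accumulates only at $0$) forces $\lambda_{1}>\lambda_{0}$. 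Discreteness only rules out a sequence $q_{n}\in\cN$ with $\Re q_{n}\to-\lambda_{0}$; it does not exclude finitely many isolated points $q^{*}=-\lambda_{0}+i\beta$ with $\beta\neq 0$ lying exactly on the critical line, and any single such point already gives $\lambda_{1}=\lambda_{0}$ and destroys the conclusion. Simplicity of the root $-\lambda_{0}$ of $q\mapsto Z^{(q)}(0)$ does not help, since it says nothing about other zeros with the same real part. The missing ingredient --- which is precisely what the paper imports from \cite[Corollary 2.2 in Chapter B-IV]{MR839450} --- is the Perron--Frobenius-type statement for irreducible \emph{positive} eventually compact semigroups: the boundary spectrum reduces to the single, algebraically simple, real point $s(A)=-\lambda_{0}$, and the convergence to the associated rank-one projection is exponential. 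Positivity and irreducibility (the latter coming from the strict positivity of $W^{(-\lambda_{0})}(0,\cdot)$ and of $Z^{(-\lambda_{0})}$) are the essential inputs here, not discreteness.

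A secondary remark: your first formulation of the compactness of $p_{t_{0}}$ --- that composing $p_{t_{0}/2}$ ``with the resolvent which is already known to be compact'' yields compactness --- is not a valid mechanism. Compactness of $R^{(q)}$ does not by itself give compactness of $p_{t}$; one would additionally need eventual norm continuity of the semigroup, which is not known at that stage. Your second formulation (weak-$*$ sequential compactness of the unit ball of $L^{\infty}(m)$, absolute continuity of the kernels $p_{t}(x,\cdot)$ with respect to $m$ under (SF) giving pointwise convergence of $p_{t_{0}/2}f_{n_{k}}$, then Dini's theorem applied through the second factor $p_{t_{0}/2}$) is exactly the paper's Lemma \ref{lem:compactnessOfSemigroup} and is the argument to keep; your closing paragraph correctly identifies where the entrance condition enters to control the behaviour at $0$ and at $\infty$.
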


Under (SF), the proof is given by simply combining a few elements from general theories.
The following lemma is essential, whose proof is just a slight modification of a result in \cite{SchillingWang}.

\begin{Lem} \label{lem:compactnessOfSemigroup}
	For every $t > 0$, the operator $p_{t}: C_{\infty}(0,\infty] \to C_{\infty}(0,\infty]$ is compact. 
\end{Lem}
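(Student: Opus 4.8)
The plan is to follow the strategy of \cite{SchillingWang}, adapted to our extended state space $(0,\infty]$. The key structural fact we exploit is the factorization $p_t = p_{t/3}\circ p_{t/3}\circ p_{t/3}$ together with the resolvent formula \eqref{potentialDensityOneside}–\eqref{ResolventDensityKilledAt0}. First I would record the two ingredients we are handed: by Proposition \ref{prop:strongerRegularity}, $(SF)$ gives $p_s\colon \cB_b((0,\infty])\to C_\infty(0,\infty]$ for every $s>0$; and by Proposition \ref{prop:compactness}, the resolvent $R^{(q)}$ is compact on $C_\infty(0,\infty]$ for $q\in\rho(A)$, hence in particular for $q>0$. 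The idea is that $p_t$ inherits compactness from $R^{(q)}$ via an averaging argument in time, using the strong Feller smoothing to pass from $L^\infty$-type bounds to genuine continuity.

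The main steps, in order. Step 1: Fix $t>0$ and a bounded sequence $(f_n)_{n\ge1}$ in $C_\infty(0,\infty]$; set $g_n := p_{t/3}f_n$. By Proposition \ref{prop:strongerRegularity} each $g_n\in C_\infty(0,\infty]$, and $\|g_n\|_\infty\le\|f_n\|_\infty$ is uniformly bounded. Step 2: For fixed $q>0$, consider $h_n := R^{(q)}g_n = \int_0^\infty e^{-qs}p_s g_n\,ds \in C_\infty(0,\infty]$. Since $(g_n)$ is bounded in $C_\infty(0,\infty]$ and $R^{(q)}$ is compact there (Proposition \ref{prop:compactness}), after passing to a subsequence we may assume $h_n\to h$ uniformly on $(0,\infty]$ for some $h\in C_\infty(0,\infty]$. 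Step 3: The aim is to recover uniform convergence of $p_t f_n = p_{2t/3}g_n$ from that of $R^{(q)}g_n$. Here one uses the standard trick: write
\begin{align}
p_{2t/3}g_n = q\,e^{q\cdot 2t/3}\int_{2t/3}^\infty e^{-qs}p_s g_n\,ds - q\,e^{q\cdot 2t/3}\!\!\int_{2t/3}^\infty\!\! e^{-qs}\big(p_s g_n - p_{2t/3}g_n\big)ds
\end{align}
(a reindexing/telescoping identity), or more cleanly, following Schilling–Wang, one shows that the family $\{p_s g_n : s\ge t/3,\ n\ge1\}$ is relatively compact by combining: (a) equicontinuity in the space variable, which comes from $(SF)$ applied once more since each $g_m=p_{t/3}f_m$ is already of the form "$p_{t/3}$ of a bounded function", so $p_s g_n = p_{s+t/3}f_n$ is a $p_{\cdot}$-image that the strong Feller estimate controls uniformly on compact $s$-intervals; and (b) the tightness/vanishing-at-the-boundary coming from the entrance structure, i.e. $\lim_{x\to\infty}p_s 1(x)$ and $\lim_{x\to 0}p_s 1(x)=0$ uniformly for $s$ in a compact interval (the latter from the proof of Proposition \ref{prop:strongerRegularity}, the former from Proposition \ref{prop:limitAtInfinity}). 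Step 4: Having relative compactness of $\{p_{2t/3}g_n\}$ in $C_\infty(0,\infty]$, extract a uniformly convergent subsequence; then $p_t f_n = p_{2t/3}g_n$ along that subsequence converges in $C_\infty(0,\infty]$, which is exactly compactness of $p_t$.

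The hard part will be Step 3: making the equicontinuity of $\{p_s g_n\}$ in the spatial variable \emph{uniform in $s$ over a compact interval} and \emph{uniform in $n$}, since $(SF)$ as stated only gives continuity of $x\mapsto p_t f(x)$ for each fixed $t$ and $f$, with no quantitative modulus. The resolution, exactly as in \cite{SchillingWang}, is to not prove spatial equicontinuity directly but instead to run the argument through $R^{(q)}$: one shows $p_t f_n$ is uniformly close to $q R^{(q)}(p_{t-\varepsilon}f_n)$-type expressions as $q\to\infty$ (using strong continuity of the resolvent and the semigroup on $C_\infty(0,\infty]$, both already available from the Feller property), and the $R^{(q)}$-images form a relatively compact set by Proposition \ref{prop:compactness}. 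Thus compactness of $p_t$ follows from compactness of $R^{(q)}$ plus the approximation $p_t f \approx$ (continuous functional calculus expression in $R^{(q)}$) valid uniformly on bounded sets, the approximation being the place where $(SF)$/Proposition \ref{prop:strongerRegularity} is used to guarantee every relevant intermediate function lies in $C_\infty(0,\infty]$ rather than merely in $\cB_b$. I expect the write-up to be short precisely because Propositions \ref{prop:strongerRegularity} and \ref{prop:compactness} do the real work; the lemma is the bridge between them.
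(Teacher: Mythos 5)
There is a genuine gap in Step 3, and it is fatal to the route you chose. Your plan is to deduce compactness of $p_{t}$ from compactness of $R^{(q)}$ (Proposition \ref{prop:compactness}) via an approximation of the form $p_{t}f_{n}\approx qR^{(q)}(p_{t-\eps}f_{n})$ as $q\to\infty$. But $\|qR^{(q)}g-g\|_{\infty}\to 0$ holds for each fixed $g$ (strong convergence), not uniformly over bounded sets of $g$: if it did hold uniformly on the unit ball, the identity on $C_{\infty}(0,\infty]$ would be a norm limit of the compact operators $qR^{(q)}$ and hence compact, which is false. More generally, for $C_{0}$-semigroups a compact resolvent does \emph{not} imply a compact semigroup unless one also has eventual norm continuity, which is not available here and is essentially as hard to prove as the compactness itself. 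So no ``functional calculus in $R^{(q)}$'' approximation valid uniformly on bounded sets exists, and your Step 3 cannot be completed as described. Your fallback (a)--(b) (spatial equicontinuity uniform in $s$ and $n$) is, as you yourself note, not extractable from (SF), which gives no modulus of continuity.

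The mechanism the paper actually uses (following \cite{SchillingWang}) is different and does not pass through $R^{(q)}$ at all. First, (SF) together with the strong continuity and the definition of the reference measure shows that $p_{t}(x,du)$ is absolutely continuous with respect to $m$ for every $t>0$; write $p_{t}(x,du)=\rho_{t}(x,u)m(du)$ and extend $p_{t}$ to an operator $P_{t}$ on $L^{\infty}(m)$, which by Proposition \ref{prop:strongerRegularity} maps $L^{\infty}(m)$ into $C_{\infty}(0,\infty]$. Second, by Banach--Alaoglu the unit ball of $L^{\infty}(m)$ is weak-$*$ compact, so from any bounded sequence $(f_{n})$ one extracts a weak-$*$ convergent subsequence, giving \emph{pointwise} convergence of $P_{t/2}f_{n_{k}}(x)$ for every $x$. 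Third --- and this is the step your proposal has no substitute for --- one upgrades pointwise to uniform convergence by writing $|P_{t}f_{n_{k}}-P_{t}f_{n_{l}}|\leq p_{t/2}h_{N}$ with $h_{N}(x):=\sup_{i,j\geq N}|P_{t/2}(f_{n_{i}}-f_{n_{j}})(x)|$ and applying Dini's theorem to the continuous functions $p_{t/2}h_{N}$ on the \emph{compact} space $[0,\infty]$ (this is precisely where the entrance compactification earns its keep). The factorization $p_{t}=p_{t/2}\circ p_{t/2}$ you intuited is indeed present, but the compactness input is weak-$*$ compactness of the $L^{\infty}$ ball plus Dini, not compactness of the resolvent.
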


\begin{proof}
	In this proof, we consider the restriction of the reference measure $m$ to the Borel sets $\cB(I)$ and, for the sake of brevity, denote it by the same symbol $m$.

	Let $N \in \cB(I)$ with $m(N) = 0$.
	By the definition of the reference measure, it obviously follows $R^{(1)}1_{N}(x) = 0$ for every $x \in (0,\infty]$.
	For each $x \in (0,\infty]$, this implies $p_{t}1_{N}(x) = 0$ for almost every $t > 0$.
	Since the semigroup $(p_{t})_{t \geq 0}$ is strongly continuous on $C_{\infty}(0,\infty]$, we see that $p_{t}1_{N}(x) = 0$ for every $t > 0$ and $x \in (0,\infty]$ by Proposition \ref{prop:strongerRegularity} and (SF).
	Thus, for every $t > 0$ the subprobability $p_{t}(x,du)$ is absolutely continuous with respect to $m$ on $(0,\infty]$.
	We denote the Radon-Nikodym derivative by $\rho_{t}(x,u)$: $p_{t}(x,du) = \rho_{t}(x,u)m(du) \ (x \in (0,\infty],\ u \in (0,\infty])$.
	For $f \in L^{\infty}(m)$, define $P_{t}f(x) := \int_{(0,\infty]}\rho_{t}(x,u)f(u)m(du) \ (x \in (0,\infty])$.
	Set $S_{f} := \{x \in (0,\infty] \mid f(x) \leq ||f||_{L^{\infty}(m)} \}$.
	Since $S_{f}$ is measurable, we have $\tilde{f} := f1_{S_{f}} \in \cB_{b}(I)$.
	Then it clearly follows $P_{t}f(x) = p_{t}\tilde{f}(x)$, and $P_{t}f \in C_{\infty}(0,\infty]$ from Proposition \ref{prop:strongerRegularity}.

	We show the compactness of the operator $P_{t}:L^{\infty}(m) \to C_{\infty}(0,\infty]$.
	This obviously implies the desired result since $C_{\infty}(0,\infty] \subset L^{\infty}(m)$ and $P_{t}f = p_{t}f$ for $f \in C_{\infty}(0,\infty]$, where we note that $m$ has a full support on $(0,\infty)$ from Proposition \ref{Prop101}.
	By the Banach-Alaoglu theorem, the unit ball $U := \{ f \in L^{\infty}(m) \mid ||f||_{L^{\infty}(m)} \leq 1 \}$ of $L^{\infty}(m)$ is compact in $\sigma(L^{\infty}(m),L^{1}(m))$-topology.
	Thus, for every sequence $(f_{n})_{n \geq 0}$ in $U$, there exist a subsequence $(f_{n_{k}})_{k \geq 0}$ and $f \in U$ such that $\lim_{k \to \infty}\int_{(0,\infty]}f_{n_{k}}(u)g(u)m(du) = \int_{(0,\infty]}f(u)g(u)m(du)$ for every $g \in L^{1}(m)$.
	In particular, we have for every $x \in (0,\infty]$
	\begin{align}
		P_{t}f_{n_{k}}(x) = \int_{(0,\infty)}\rho_{t}(x,u)f_{n_{k}}(u)m(du) \xrightarrow[k \to \infty]{} \int_{(0,\infty)}\rho_{t}(x,u)f(u)m(du) = P_{t}f(x). \label{} 
	\end{align}
	Since $\lim_{x \to 0}P_{t}f_{n_{k}}(x) = \lim_{x \to 0}P_{t}f(x) = 0$, we see that the continuous functions $(P_{t}f_{n_{k}})_{k\geq 0}$ on $[0,\infty]$ converge pointwise to $P_{t}f$. 
	Fix $N > 0$ and let $k,l \geq N$.
	We have
	\begin{align}
		|P_{t}f_{n_{k}}(x) - P_{t}f_{n_{l}}(x)| \leq p_{t/2}|P_{t/2}(f_{n_{k}}-f_{n_{l}})|(x) \leq p_{t/2}h_{N}(x) \quad (x \in [0,\infty]), \label{Eq13}
	\end{align}
	where $h_{N}(x) := \sup_{i,j \geq N}|P_{t/2}(f_{n_{i}}-f_{n_{j}})(x)| \ (x \in [0,\infty])$.
	The function $h_{N}$ decreases to zero pointwise as $N \to \infty$, and the same is true for $p_{t/2}h_{N}$ by the dominated convergence theorem.
	Since the function $p_{t/2}h_{N}$ is continuous on $[0,\infty]$, Dini's theorem shows $p_{t/2}h_{N}$ converges to zero uniformly.
	Hence, from \eqref{Eq13} we have the uniform convergence of $(P_{t}f_{n_{k}})_{k \geq 0}$ as $k \to \infty$, and the proof is complete.
\end{proof}

Once we have the compactness of the transition semigroup, 
we can readily prove Theorem \ref{thm:rateOfCovergence} by applying general results on the semigroup theory.
The main reference is \cite{MR839450} (see also \cite[Appendix]{KolbSavov}).

\begin{proof}[Proof of Theorem \ref{thm:rateOfCovergence}]
	Since the residue of the resolvent density $r^{(q)}(x,y)$ at $q = -\lambda_{0}$ is
	\begin{align}
		\lim_{q \to -\lambda_{0}}(q+\lambda_{0})r^{(q)}(x,y) = \frac{Z^{(-\lambda_{0})}(x)W^{(-\lambda_{0})}(0,y)}{\rho}, \label{}
	\end{align}
	we can easily see by the dominated convergence theorem that the residue at $q = -\lambda_{0}$ of the resolvents $(R^{(q)})_{q \in \bC \setminus \cN}$ as operators on $C_{\infty}(0,\infty]$ is the projection $\pi_{\lambda_{0}}$, that is, $(q + \lambda_{0})R^{(q)}$ converges to $\pi_{\lambda_{0}}$ as $q \to -\lambda_{0}$ in operator norm.
	Since the operator $p_{t} \ (t > 0)$ is compact on $C_{\infty}(0,\infty]$ from Lemma \ref{lem:compactnessOfSemigroup}, we see from \cite[Corollary 2.2 in Chapter B-IV]{MR839450} and its proof that
	$\lambda_{1} > \lambda_{0}$ and for every $\delta \in (0,\lambda_{1}-\lambda_{0})$ there exists a constant $\tilde{M}_{\delta} > 0$ such that
	\begin{align}
		\sup_{g \in C_{\infty}(0,\infty], ||g||_{\infty} \leq 1}|| \mathrm{e}^{\lambda_{0}t} p_{t}g - \pi_{\lambda_{0}}g ||_{\infty} \leq \tilde{M}_{\delta}\mathrm{e}^{-\delta t} \quad (t \geq 0). \label{} 
	\end{align}
	Take $\eps > 0$.
	From the strong Feller property, we see for $t > \eps$ that
	\begin{align}
		\sup_{f \in \cB_{b}(I), ||f||_{\infty} \leq 1}|| \mathrm{e}^{\lambda_{0}t} p_{t}f - \pi_{\lambda_{0}}f ||_{\infty} 
		&\leq \mathrm{e}^{\lambda_{0}\eps}\sup_{g \in C_{\infty}(0,\infty], ||g||_{\infty} \leq 1}|| \mathrm{e}^{\lambda_{0}(t-\eps)} p_{t-\eps}g - \pi_{\lambda_{0}}g ||_{\infty} \label{} \\
		&\leq \tilde{M}_{\delta}\mathrm{e}^{(\lambda_{0}+\delta)\eps}\mathrm{e}^{-\delta t}, \label{}
	\end{align}
	where we used the fact that $\pi_{\lambda_{0}}(p_{\eps}f) = \mathrm{e}^{-\lambda_{0}\eps}\pi_{\lambda_{0}}f$ since $\nu_{\lambda_{0}}$ is a quasi-stationary distribution.
	Thus, we obtain \eqref{Eq11} for some $M_{\delta} > 0$.
	We show \eqref{Eq12}.
	Take a distribution $\mu$ on $(0,\infty]$ and take $f \in \cB_{b}(I)$ with $||f||_{\infty} \leq 1$.
	Noting that $\pi_{\lambda_{0}}(f) = \nu_{\lambda_{0}}(f)\pi_{\lambda_{0}}(1)$, we have
	\begin{align}
		&|\bE_{\mu}[f(X_{t}) \mid \tau_{0} > t] - \nu_{\lambda_{0}}(f)| \label{} \\
		=&\left|\frac{\mu(p_{t}f)}{\mu(p_{t}1)} - \nu_{\lambda_{0}}(f)\right| \label{} \\
		=& \frac{1}{\mu(\mathrm{e}^{\lambda_{0}t} p_{t}1)}\left|\mu(\mathrm{e}^{\lambda_{0}t} p_{t}f - \pi_{\lambda_{0}}f) - \nu_{\lambda_{0}}(f)\mu(\mathrm{e}^{\lambda_{0}t}p_{t}1 - \pi_{\lambda_{0}}(1))  \right| \label{} \\
		\leq& \frac{2M_{\delta}\mathrm{e}^{-\delta t}}{\mu(\mathrm{e}^{\lambda_{0}t} p_{t}1)} \label{} \\
		\leq& \frac{2M_{\delta}\mathrm{e}^{-\delta t}}{\mu(Z^{(-\lambda_{0})})},
	\end{align}
	where in the last inequality we used the inequality $\mathrm{e}^{\lambda_{0}t}p_{t}1(x) \geq \mathrm{e}^{\lambda_{0}t}p_{t}Z^{(-\lambda_{0})}(x) = Z^{(-\lambda_{0})}(x) > 0$ following from Proposition \ref{prop:invariantFunction}.
\end{proof}

\begin{Rem} \label{rem:uniformConvergence}
	In the RHS of \eqref{Eq12}, only $\mu(Z^{(-\lambda_{0})})$ depends on the initial distribution $\mu$.
	Thus, for any fixed value $\eps > 0$, the convergence in \eqref{Eq12} is uniform for initial distributions $\mu$ with $\mu(Z^{(-\lambda_{0})}) \geq \eps$.
	For example, since $(0,\infty] \ni x \mapsto Z^{(-\lambda_{0})}(x)$ is strictly positive and increasing from Proposition \ref{prop:invariantFunction}, the convergence is uniform for initial distributions supported on $[\delta,\infty]$ for each $\delta > 0$.
\end{Rem}

We briefly present an application of Theorem \ref{thm:rateOfCovergence} to the exponential ergodicity of the conditional limit to avoid zero.
While the arguments are based on standard techniques (see e.g., \cite{ChampagnatVillemonaisPTRF}), we provide the proofs for completeness.
To simplify notation, we suppose $X$ is given as a coordinate process on $D$, the space of c\`adl\`ag paths from $[0,\infty)$ to $[0,\infty]$.
From Proposition \ref{prop:invariantFunction}, the process
\begin{align}
	M_{t} := \mathrm{e}^{\lambda_{0}t} \frac{Z^{(-\lambda_{0})}(X_{t})}{Z^{(-\lambda_{0})}(X_{0})}
\end{align}
is a non-negative martingale with $\bE_{x}M_{t} = 1 \ (t \geq 0)$ under $\bP_{x}$ for every $x \in (0,\infty]$, where we note that the case $x = \infty$ follows from Proposition \ref{prop:limitAtInfinity}.
This induces a change of measure, that is, for every $x \in (0,\infty]$ there exists a unique probability measure $\bQ_{x}$ on $(D,\cG_{\infty})$,where $\cG_{\infty} := \bigvee_{t > 0} \cG_{t}$ for the right-continuous induced filtration $(\cG_{t})_{t \geq 0}$, such that
\begin{align}
	\bQ_{x}[A] = \bE_{x}[M_{t},A] \quad (t \geq 0, \ A \in \cG_{t}).
\end{align}
We show that $(\bQ_{x})_{x \in (0,\infty]}$ is a limit distribution of $X$ conditioned to avoid zero.

\begin{Prop}
	For every $s \geq 0$, we have
	\begin{align}
		\lim_{t \to \infty} \bP_{x}[A \mid \tau_{0} > t] = \bQ_{x}[A] \quad (x \in (0,\infty], \ A \in \cG_{s}).
	\end{align}
\end{Prop}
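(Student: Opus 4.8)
The plan is to compute $\bP_x[A \mid \tau_0 > t]$ for $A \in \cF_s$ explicitly and pass to the limit using the exponential convergence already established in Theorem \ref{thm:rateOfCovergence}. First I would fix $s \geq 0$, take $t > s$, and $A \in \cF_s$. By the Markov property at time $s$,
\begin{align}
	\bP_x[A, \tau_0 > t] = \bE_x[1_A 1_{\{\tau_0 > s\}} p_{t-s}1(X_s)], \label{eq:markovAtS}
\end{align}
where $p_{t-s}1(y) = \bP_y[\tau_0 > t-s]$, and similarly $\bP_x[\tau_0 > t] = p_t 1(x)$. Hence
\begin{align}
	\bP_x[A \mid \tau_0 > t] = \frac{\bE_x[1_A 1_{\{\tau_0 > s\}}\, \mathrm{e}^{\lambda_0(t-s)}p_{t-s}1(X_s)]}{\mathrm{e}^{\lambda_0 t}p_t 1(x)}. \label{eq:conditionalRatio}
\end{align}

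Next I would invoke Theorem \ref{thm:rateOfCovergence}: $\mathrm{e}^{\lambda_0 \tau}p_\tau 1(y) \to \pi_{\lambda_0}(1)(y) = \frac{1}{\rho \lambda_0}Z^{(-\lambda_0)}(y)$ as $\tau \to \infty$, uniformly in $y \in (0,\infty]$ (taking $f = 1$ in \eqref{Eq11}). Applied to the denominator of \eqref{eq:conditionalRatio} with $\tau = t$ at the fixed point $x$, this gives $\mathrm{e}^{\lambda_0 t}p_t 1(x) \to \frac{1}{\rho\lambda_0}Z^{(-\lambda_0)}(x)$. For the numerator, the uniform convergence lets me replace $\mathrm{e}^{\lambda_0(t-s)}p_{t-s}1(X_s)$ inside the expectation by $\frac{1}{\rho\lambda_0}Z^{(-\lambda_0)}(X_s)$ with a vanishing error: $|\mathrm{e}^{\lambda_0(t-s)}p_{t-s}1(y) - \frac{1}{\rho\lambda_0}Z^{(-\lambda_0)}(y)| \leq M_\delta \mathrm{e}^{-\delta(t-s)}$ uniformly in $y$, so the difference of expectations is bounded by $M_\delta \mathrm{e}^{-\delta(t-s)} \to 0$. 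Therefore the numerator converges to $\bE_x[1_A 1_{\{\tau_0 > s\}}\frac{1}{\rho\lambda_0}Z^{(-\lambda_0)}(X_s)]$, and dividing,
\begin{align}
	\lim_{t\to\infty}\bP_x[A \mid \tau_0 > t] = \frac{\bE_x[1_A 1_{\{\tau_0 > s\}}Z^{(-\lambda_0)}(X_s)]}{Z^{(-\lambda_0)}(x)} = \bE_x\!\left[1_A\, \mathrm{e}^{\lambda_0 s}\frac{Z^{(-\lambda_0)}(X_s)}{Z^{(-\lambda_0)}(X_0)}\right] = \bE_x[1_A M_s], \label{eq:limitIsQ}
\end{align}
where I used $Z^{(-\lambda_0)}(\partial) = 0$ (so the indicator $1_{\{\tau_0 > s\}}$ is automatic) and $X_0 = x$ under $\bP_x$. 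By the definition of $\bQ_x$ this equals $\bQ_x[A]$, which is the claim.

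The only genuine points requiring care are the uniformity of the convergence in Theorem \ref{thm:rateOfCovergence} — which is exactly what \eqref{Eq11} provides, the supremum over $\|f\|_\infty \le 1$ absorbing the state variable — and the positivity $Z^{(-\lambda_0)}(x) > 0$ for $x \in (0,\infty]$, guaranteed by Proposition \ref{prop:invariantFunction}, so that the denominator in \eqref{eq:limitIsQ} is nonzero and the limit is well-defined. The case $x = \infty$ needs the martingale property of $M_t$ under $\bP_\infty$, which is noted just before the statement via Proposition \ref{prop:limitAtInfinity}; no extra work is needed there. I do not expect any serious obstacle: once the conditional probability is written as the ratio \eqref{eq:conditionalRatio}, everything follows from the already-proven exponential ergodicity and elementary estimates.
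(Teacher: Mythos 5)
Your proof is correct and follows essentially the same route as the paper: apply the Markov property at time $s$ to write $\bP_x[A\mid \tau_0>t]$ as the ratio $\mathrm{e}^{\lambda_0 s}\bE_x[\mathrm{e}^{\lambda_0(t-s)}p_{t-s}1(X_s),A]/(\mathrm{e}^{\lambda_0 t}p_t1(x))$, then pass to the limit using the uniform convergence in \eqref{Eq11} and the positivity of $Z^{(-\lambda_0)}$. The paper's proof is a one-line version of exactly this argument.
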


\begin{proof}
	From the Markov property, \eqref{Eq11} (or \eqref{eq40}), and the bounded convergence theorem, we have for $0 < s < t$ and $A \in \cG_{s}$ 
	\begin{align}
		\bP_{x}[A \mid \tau_{0} > t] = \mathrm{e}^{\lambda_{0}s}\frac{\bE_{x}[\mathrm{e}^{\lambda_{0}(t-s)} p_{t-s}1(X_{s}),A]}{\mathrm{e}^{\lambda_{0}t} p_{t}1(x)} \xrightarrow{t \to \infty} \mathrm{e}^{\lambda_{0}s}\frac{\bE_{x}[Z^{(-\lambda_{0})}(X_{s}),A]}{Z^{(-\lambda_{0})}(x)} = \bQ_{x}[A].
	\end{align}
	The proof is complete.
\end{proof}

It is easy to see that under $\bQ_{x}$, the process $X$ is a conservative time-homogeneous Markov process on $(0,\infty]$, and its transition semigroup $(q_{t})_{t \geq 0}$ is
\begin{align}
	q_{t}f(x) = \frac{\mathrm{e}^{\lambda_{0}t}}{Z^{(-\lambda_{0})}(x)}p_{t}(Z^{(-\lambda_{0})}f)
\end{align}
for every $f \in \cB_{b}(I)$.
Define 
\begin{align}
	\mu(du) := \frac{1}{\rho}W^{(-\lambda_{0})}(0,u)Z^{(-\lambda_{0})}(u)m(du).
\end{align}
From Proposition \ref{prop:invariantFunction} (iii) the measure $\mu$ is a distribution on $(0,\infty]$. 
Applying \eqref{resolventIdentityW} and \eqref{resolventIdentityZ}, we can easily confirm that
\begin{align}
	\int_{(0,\infty)}\mu(dx)\int_{0}^{\infty}\mathrm{e}^{-rt}q_{t}f(x)dt = \frac{\mu(f)}{r} \quad (r > 0), 
\end{align}
that is, $\mu$ is an invariant distribution for $(q_{t})_{t \geq 0}$.
We show $(q_{t})_{t \geq 0}$ is exponentially ergodic.

\begin{Thm}
	Let $\delta \in (0,\lambda_{1} - \lambda_{0})$ and let $M_{\delta} \in (0,\infty)$ be the one in \eqref{Eq11}.
	Then we have for every $x \in (0,\infty]$
	\begin{align}
		\sup_{\substack{f:(0,\infty]\to \bR, \\ ||fZ^{(-\lambda_{0})}||_{\infty} \leq 1}} |q_{t}f(x) - \mu(f) | \leq \frac{M_{\delta}}{Z^{(-\lambda_{0})}(x)}\mathrm{e}^{-\delta t}.
	\end{align}
\end{Thm}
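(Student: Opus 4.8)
The plan is to deduce the estimate directly from \eqref{Eq11} by means of the explicit Doob transform formula recorded just above the statement, namely $q_{t}f(x) = \mathrm{e}^{\lambda_{0}t}Z^{(-\lambda_{0})}(x)^{-1}p_{t}(Z^{(-\lambda_{0})}f)(x)$. Given a measurable $f:(0,\infty]\to\bR$ with $\|fZ^{(-\lambda_{0})}\|_{\infty}\le 1$, the first step is to set $g := Z^{(-\lambda_{0})}f$ and observe that $g$ is a bounded measurable function: $Z^{(-\lambda_{0})}$ is bounded by Proposition \ref{prop:invariantFunction} (ii), and since $-\lambda_{0}\in\cN$ we have $Z^{(-\lambda_{0})}(0)=0$, so $g$ extends consistently to the cemetery $0$ and its restriction to $I=(0,\infty)$ lies in $\cB_{b}(I)$ with $\|g\|_{\infty}\le 1$. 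Thus $g$ is an admissible test function for the supremum in \eqref{Eq11}.

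The main step is then to identify the projection $\pi_{\lambda_{0}}g$ with $Z^{(-\lambda_{0})}\mu(f)$. By the definition of $\pi_{\lambda_{0}}$ in Theorem \ref{thm:rateOfCovergence} we have $\pi_{\lambda_{0}}g(x) = (\rho\lambda_{0})^{-1}Z^{(-\lambda_{0})}(x)\,\nu_{\lambda_{0}}(g)$, and using $\nu_{\lambda_{0}}(dx)=\lambda_{0}W^{(-\lambda_{0})}(0,x)m(dx)$ from \eqref{QSD} together with the definition $\mu(dx)=\rho^{-1}W^{(-\lambda_{0})}(0,x)Z^{(-\lambda_{0})}(x)m(dx)$, one reads off
\begin{align}
\nu_{\lambda_{0}}(g)=\nu_{\lambda_{0}}\bigl(Z^{(-\lambda_{0})}f\bigr)=\lambda_{0}\!\int_{(0,\infty)}\!W^{(-\lambda_{0})}(0,u)Z^{(-\lambda_{0})}(u)f(u)\,m(du)=\lambda_{0}\rho\,\mu(f),
\end{align}
which is absolutely convergent because $\|fZ^{(-\lambda_{0})}\|_{\infty}\le 1$ and $\mu$ is a probability measure (Proposition \ref{prop:invariantFunction} (iii)). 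Hence $\pi_{\lambda_{0}}g(x)=Z^{(-\lambda_{0})}(x)\mu(f)$.

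Combining these with $Z^{(-\lambda_{0})}(x)\in(0,\infty)$ for $x\in(0,\infty]$ (Proposition \ref{prop:invariantFunction} (i)--(ii)), we get
\begin{align}
|q_{t}f(x)-\mu(f)| = \frac{1}{Z^{(-\lambda_{0})}(x)}\bigl|\mathrm{e}^{\lambda_{0}t}p_{t}g(x)-\pi_{\lambda_{0}}g(x)\bigr|\le \frac{M_{\delta}}{Z^{(-\lambda_{0})}(x)}\mathrm{e}^{-\delta t},
\end{align}
where the last inequality is \eqref{Eq11} applied to the function $g$; taking the supremum over all admissible $f$ completes the proof. There is essentially no serious obstacle here: the argument is a change-of-measure bookkeeping that funnels everything into \eqref{Eq11}, and the only point needing a line of care is the extension of $g=Z^{(-\lambda_{0})}f$ to the cemetery $0$ via $Z^{(-\lambda_{0})}(0)=0$, which is what legitimizes invoking both the transition-semigroup formula for $q_{t}$ and the estimate \eqref{Eq11} whose supremum ranges over $\cB_{b}(I)$.
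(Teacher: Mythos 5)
Your proposal is correct and follows essentially the same route as the paper: both reduce the claim to \eqref{Eq11} applied to $g = Z^{(-\lambda_{0})}f$ via the identity $\pi_{\lambda_{0}}(Z^{(-\lambda_{0})}f)(x) = Z^{(-\lambda_{0})}(x)\mu(f)$ and the Doob-transform formula for $q_{t}$. The extra bookkeeping you supply (the explicit computation of $\nu_{\lambda_{0}}(g)$ and the consistency of $g$ at the cemetery) is exactly what the paper leaves implicit.
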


\begin{proof}
	Let $f:(0,\infty] \to \bR$ be a measurable function such that $||fZ^{(-\lambda_{0})}||_{\infty} \leq 1$.
	Noting that for every $x \in (0,\infty]$
	\begin{align}
		\mu(f) = \frac{\pi_{\lambda_{0}}(Z^{(-\lambda_{0})}f)(x)}{Z^{(-\lambda_{0})}(x)},
	\end{align}
	we have
	\begin{align}
		q_{t}f(x) - \mu(f) = \frac{\mathrm{e}^{\lambda_{0}t}p_{t}(Z^{(-\lambda_{0})}f)(x) - \pi_{\lambda_{0}}(Z^{(-\lambda_{0})}f)(x)}{Z^{(-\lambda_{0})}(x)},
	\end{align}
	and the desired result readily follows from \eqref{Eq11}.
\end{proof}

\appendix

\section{Exponential convergence to the Yaglom limit: Accessible right boundary case} \label{appendix:accessible}

In this appendix, we consider the exponential convergence to the Yaglom limit when the right boundary of the state space $I$ is accessible.
Since the proofs can be given by the obvious modifications of the arguments in Sections \ref{section:spectralTheory} and \ref{section:exponentialConvergence}, we omit the proofs and only give an outline.
Throughout, we suppose (A1)-(A3) and $\bP_{x}[\tau_{0} < \infty] > 0 \ (x > 0)$. 
We consider the following two cases: 
\begin{description}
	\item[(I)] $I = (0,\ell]$ and $\zeta = \tau_{0}$.
	\item[(II)] $I = (0,\ell)$ and $\zeta = \tau_{0} \wedge \tau_{\ell}$, where $\tau_{\ell} := \lim_{x \to \ell}\tau_{x}^{+}$.
\end{description}
Note that case (II) is treated under additional conditions, as we will see later.

We first consider case (I).
From \cite[Theorem 2.10]{noba2023analytic}, we have a representation of the resolvent density of $(p_{t})_{t \geq 0}$ killed on hitting zero:
\begin{align}
	\int_{0}^{\infty}\mathrm{e}^{-qt}p_{t}f(x)dt = \int_{(0,\ell]}r_{1}^{(q)}(x,u)f(u)m(du)
\end{align}
for
\begin{align}
	r_{1}^{(q)}(x,u) := \frac{Z^{(q)}(x,\ell)}{Z^{(q)}(0,\ell)}W^{(q)}(0,u) - W^{(q)}(x,u).
\end{align}
As suggested by this representation, this case can be obtained by just replacing $Z^{(q)}(x)$ with $Z^{(q)}(x,\ell)$ in the argument of Section \ref{section:exponentialConvergence}.
Note that $q \mapsto r^{(q)}(x,u)$ is meromorphic on $\bC$ for each $x,u \in I$.
Since the density $r^{(q)}(x,u)$ is continuous in $x$, we can easily 
confirm that $(p_{t})_{t \geq 0}$ has the Feller property, that is, $p_{t}$ maps $C_{\infty}(0,\ell]$ to itself.
We denote the infinitesimal generator by $A$.
By exactly the same argument in Section \ref{section:spectralTheory},
we can show that the spectrum $\sigma(A)$ of $A$ coincides with $\cN := \{ q \in \bC \mid Z^{(q)}(0,\ell) = 0 \}$.
Under the irreducibility, the unique existence of a quasi-stationary distribution has been shown:
\begin{Thm}[{\citet[Theorem A.6]{noba2023analytic}}]
	Suppose the following irreducibility:
	\begin{description}
		\item[(Irr1)] $\bP_{x}[\tau_{y} < \infty] > 0$ for every $x \in (0,\ell]$ and $y \in [0,\ell]$.
	\end{description}
	Let $\iota_{0} := \sup \{ \lambda \geq 0 \mid \bE_{x}[\mathrm{e}^{\lambda \tau_{0}}] < \infty \ \text{for every $x \in I$} \}$.
	Then we have $\iota_{0} \in (0,\infty)$ and 
	\begin{align}
		\iota_{0} = \min \{ \lambda \geq 0 \mid Z^{(-q)}(0,\ell) = 0 \}.
	\end{align}
	In addition, there exists a unique quasi-stationary distribution
	\begin{align}
		\mu_{\iota_{0}}(dx) = \iota_{0}W^{(-\iota_{0})}(0,x)m(dx),
	\end{align}
	and we have $\bP_{\mu_{\iota_{0}}}[\tau_{0} > t] = \mathrm{e}^{-\iota_{0}t} \ (t \geq 0)$ and $W^{(-\iota_{0})}(0,x) > 0 \ (x \in (0,\ell])$.
\end{Thm}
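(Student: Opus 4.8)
The plan is to run the entire programme of Sections \ref{section:spectralTheory}--\ref{section:exponentialConvergence} with $Z^{(q)}(\cdot,\ell)$ in place of $Z^{(q)}(\cdot)$, and then to overlay the Perron--Frobenius theory for positive semigroups. \emph{First} I would record the Laplace-transform identity. Integrating $r_{1}^{(q)}$ against $1$, using $\int_{(x,\ell)}W^{(q)}(x,u)m(du)=q^{-1}(Z^{(q)}(x,\ell)-1)$ and $\int_{0}^{\infty}\mathrm{e}^{-qt}p_{t}1(x)\,dt=q^{-1}(1-\bE_{x}[\mathrm{e}^{-q\tau_{0}}1_{\{\tau_{0}<\infty\}}])$, a one-line cancellation gives
\begin{align}
	\bE_{x}[\mathrm{e}^{-q\tau_{0}}1_{\{\tau_{0}<\infty\}}]=\frac{Z^{(q)}(x,\ell)}{Z^{(q)}(0,\ell)}\qquad(q>0,\ x\in(0,\ell]),
\end{align}
and letting $q\downarrow 0$ yields $\bP_{x}[\tau_{0}<\infty]=1$. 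As indicated in the appendix text, the kernels $r_{1}^{(q)}$ form a pseudo-resolvent on $C_{\infty}(0,\ell]$, the semigroup is Feller with generator $A$, and the verbatim analogues of Theorem \ref{thm:SpectrumIsZeros2} and Propositions \ref{prop:compactness}, \ref{prop:SpectrumIsZeros} give $\sigma(A)=\cN=\{q:Z^{(q)}(0,\ell)=0\}$, these are eigenvalues with eigenfunctions $Z^{(q)}(\cdot,\ell)$, and $p_{t}$ is compact for $t>0$; hence $\cN$ is discrete with real parts bounded above, and since $p_{t}$ is sub-Markovian and $Z^{(0)}(0,\ell)=1\neq 0$ we get $s(A):=\sup\{\Re q:q\in\cN\}<0$.

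Next I would establish $\iota_{0}<\infty$, equivalently $\cN\neq\emptyset$. This is precisely where (Irr1) is needed: without it, a pure downward drift makes $\tau_{0}$ bounded and $\iota_{0}=\infty$. Under (Irr1) the process reaches arbitrarily high levels with positive probability, which prevents $\bP_{x}[\tau_{0}>t]$ from decaying super-exponentially; I would argue this exactly as in \cite[Proposition 5.1]{noba2023analytic}, or equivalently note that an eventually compact positive semigroup whose generator has empty spectrum is quasi-nilpotent and rule that out using (Irr1). Once $\cN\neq\emptyset$, I would apply Perron--Frobenius/Krein--Rutman theory for positive semigroups (see \cite[Ch.~B-IV, C-III]{MR839450}): the operator $p_{t}$ is compact, positive, and irreducible (the probabilistic (Irr1) transports through $r_{1}^{(q)}>0$ on $\{x\le u\}$), so $s(A)$ is attained at a simple real eigenvalue with a strictly positive eigenfunction; writing $s(A)=-\iota_{1}$ with $\iota_{1}\in(0,\infty)$ we get $\iota_{1}=\min\{\lambda\ge 0:Z^{(-\lambda)}(0,\ell)=0\}$ from $\sigma(A)=\cN$, with dominant eigenfunction $Z^{(-\iota_{1})}(\cdot,\ell)$. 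Finally $\iota_{0}=\iota_{1}$: by Pringsheim's theorem the Laplace transform $q\mapsto\bE_{x}[\mathrm{e}^{-q\tau_{0}}]$ of the law of $\tau_{0}$ is singular at its abscissa of convergence $-\iota_{0}$, but on $\{\Re q>-\iota_{0}\}$ it agrees with the meromorphic $Z^{(q)}(x,\ell)/Z^{(q)}(0,\ell)$, whose poles lie in $\cN$, forcing $-\iota_{0}\in\cN$; conversely for $\lambda<\iota_{1}$ the finiteness of $Z^{(-\lambda)}(x,\ell)/Z^{(-\lambda)}(0,\ell)$ gives $\bE_{x}[\mathrm{e}^{\lambda\tau_{0}}]<\infty$ by analytic continuation.

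With $\iota_{0}$ now known to be the smallest positive zero of $Z^{(\cdot)}(0,\ell)$, I would verify that $\mu_{\iota_{0}}(dx)=\iota_{0}W^{(-\iota_{0})}(0,x)m(dx)$ is the left eigenmeasure. Pairing $\mu_{\iota_{0}}$ with $r_{1}^{(r)}(\cdot,u)$ and applying the resolvent identities \eqref{resolventIdentityW} and \eqref{resolventIdentityZ} — the key simplification being $\int_{(0,\ell)}W^{(-\iota_{0})}(0,x)Z^{(r)}(x,\ell)\,m(dx)=(r+\iota_{0})^{-1}Z^{(r)}(0,\ell)$, which holds precisely because $Z^{(-\iota_{0})}(0,\ell)=0$ — one obtains $\mu_{\iota_{0}}R^{(r)}=(r+\iota_{0})^{-1}\mu_{\iota_{0}}$, hence $\mu_{\iota_{0}}p_{t}=\mathrm{e}^{-\iota_{0}t}\mu_{\iota_{0}}$; that $\mu_{\iota_{0}}$ has total mass one is again $Z^{(-\iota_{0})}(0,\ell)=0$. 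Therefore $\bP_{\mu_{\iota_{0}}}[X_{t}\in dy,\tau_{0}>t]=\mathrm{e}^{-\iota_{0}t}\mu_{\iota_{0}}(dy)$, so $\mu_{\iota_{0}}$ is quasi-stationary and $\bP_{\mu_{\iota_{0}}}[\tau_{0}>t]=\mathrm{e}^{-\iota_{0}t}$. Uniqueness follows from simplicity of the dominant eigenvalue (any quasi-stationary $\nu$ is a positive left eigenmeasure, hence a multiple of $\mu_{\iota_{0}}$ with eigenvalue $\iota_{0}$), and $W^{(-\iota_{0})}(0,x)>0$ on $(0,\ell]$ because the dominant eigenmeasure of an irreducible positive semigroup has full support, or directly as in \cite[Corollary 5.11]{noba2023analytic}.

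I expect the main obstacles to be twofold. The genuinely non-formal input is passing from the probabilistic irreducibility (Irr1) to the finiteness $\iota_{0}<\infty$ and to the functional-analytic irreducibility demanded by Perron--Frobenius; everything else is a transcription of earlier arguments. The second, more mundane, difficulty is the bookkeeping at the accessible endpoint $\ell$ — the possible contribution of $m\{\ell\}$ in the integrals, and whether $W^{(q)}(\cdot,\ell)$ enjoys the interior continuity of Proposition \ref{prop:propertiesOfScaleFunc} — which must be pinned down to make the cancellations in the first and third paragraphs literally exact.
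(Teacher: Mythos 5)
The paper does not actually prove this statement: it is imported verbatim as \citet[Theorem A.6]{noba2023analytic}, so there is no internal proof to compare against. Judged on its own, your reconstruction follows the right general strategy (resolvent density $r_{1}^{(q)}$, the identity $\bE_{x}[\mathrm{e}^{-q\tau_{0}}]=Z^{(q)}(x,\ell)/Z^{(q)}(0,\ell)$, Pringsheim at the abscissa of convergence, and the resolvent identities \eqref{resolventIdentityW}, \eqref{resolventIdentityZ} to exhibit $\mu_{\iota_{0}}$ as a left eigenmeasure), and the first and third paragraphs are essentially sound modulo the $m\{\ell\}$ bookkeeping you already flag.

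There are, however, two genuine soft spots. First, the Perron--Frobenius layer is not available as stated: in this paper compactness (indeed eventual norm continuity) of $p_{t}$ is only obtained under the strong Feller hypothesis (SF) via Lemma \ref{lem:compactnessOfSemigroup}, and (SF) is \emph{not} assumed in this theorem --- only (Irr1) is. Compactness of the resolvent (the analogue of Proposition \ref{prop:compactness}) gives that $\sigma(A)=\cN$ is a discrete set of eigenvalues, but it does not give a spectral mapping theorem for $p_{t}$, nor that $s(A)=\sup\{\Re q: q\in\cN\}$ is attained or negative; so the chain ``$p_{t}$ compact, positive, irreducible $\Rightarrow$ simple dominant eigenvalue'' cannot be invoked. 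Fortunately your alternative Pringsheim argument bypasses this entirely and is the route one should commit to. Second, uniqueness of the quasi-stationary distribution is not just ``simplicity of the dominant eigenvalue'': a priori a QSD $\nu$ satisfies $\nu p_{t}=\mathrm{e}^{-\alpha t}\nu$ for \emph{some} $\alpha>0$, and (as Theorem \ref{thm:existenceOfQSD}(ii) shows in the non-entrance case) there can be a continuum of admissible $\alpha$. You must first pin down $\alpha=\iota_{0}$ --- e.g.\ by noting that $q\mapsto\nu R^{(q)}f=(q+\alpha)^{-1}\nu(f)$ continues meromorphically with poles only in $\cN$, forcing $-\alpha\in\cN$ hence $\alpha\ge\iota_{0}$, while $\bP_{\nu}[\tau_{0}>t]=\mathrm{e}^{-\alpha t}$ together with (Irr1) forces $\alpha\le\iota_{0}$ --- and only then use simplicity of the zero $-\iota_{0}$ of $Z^{(\cdot)}(0,\ell)$ to identify the eigenmeasure. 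With these two repairs (and the verification that $\sup_{x}\bE_{x}\tau_{0}<\infty$ yields $\iota_{0}>0$, and (Irr1) yields $\iota_{0}<\infty$ as in \cite[Proposition 5.1]{noba2023analytic}), the plan goes through.
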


Assuming the strong Feller property, we have the exponential convergence to the Yaglom limit.
The following can be shown by exactly the same argument as Theorem \ref{thm:rateOfCovergence}.

\begin{Thm}
	Suppose (Irr1) and the strong Feller property, that is, $p_{t}f \in C_{\infty}(0,\ell]$ for every $f \in \cB_{b}(I)$ and $t > 0$.
	Then the spectral gap exists: $\iota_{1} > \iota_{0}$ for 
	\[
	\iota_{1} := -\sup\{ \Re q \mid q \in \sigma(A) \setminus \{ -\iota_{0} \}  \},
	\]
	and the following holds:
	for every $\delta \in (0,\iota_{1}-\iota_{0})$, there exists $M_{\delta} > 0$ and
	\begin{align}
		\sup_{f \in \cB_{b}(I), ||f||_{\infty} \leq 1}||\mathrm{e}^{\iota_{0}t}p_{t}f - \pi_{\iota_{0}}f||_{\infty} \leq M_{\delta}\mathrm{e}^{-\delta t} \quad (t > 0),
	\end{align}
	where $\pi_{\iota_{0}}$ is the projection defined by
	\begin{align}
		\pi_{\iota_{0}}f(x) := \frac{1}{\rho\iota_{0}}Z^{(-\iota_{0})}(x,\ell)\int_{I}f(y)\mu_{\iota_{0}}(dy) \label{}
	\end{align}
	for $\rho := \frac{d}{dq}Z^{(q)}(0,\ell)|_{q = -\iota_{0}} \in (0,\infty)$.
	This implies the exponential convergence to the Yaglom limit: for every distribution $\mu$ on $(0,\ell]$
	\begin{align}
		\sup_{f \in \cB_{b}(I), ||f||_{\infty} \leq 1}|\bE_{\mu}[f(X_{t}) \mid \tau_{0} > t] - \mu_{\iota_{0}}(f)| \leq
		\frac{2M_{\delta}\mathrm{e}^{-\delta t}}{\mu(Z^{(-\iota_{0})}(\cdot,\ell))} \quad (t > 0). 
	\end{align}
\end{Thm}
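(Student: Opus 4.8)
The plan is to follow the proof of Theorem~\ref{thm:rateOfCovergence}, together with the preceding Lemma~\ref{lem:compactnessOfSemigroup}, almost verbatim, replacing $(0,\infty]$ by $(0,\ell]$, $Z^{(q)}(\cdot)$ by $Z^{(q)}(\cdot,\ell)$, $\lambda_{0}$ by $\iota_{0}$, $\nu_{\lambda_{0}}$ by $\mu_{\iota_{0}}$, and $r^{(q)}$ by $r_{1}^{(q)}$; the role of condition (Q) in Section~\ref{section:exponentialConvergence} is now taken over by (Irr1). Three ingredients have to be re-verified: compactness of $p_{t}$ on $C_{\infty}(0,\ell]$; the identification of the residue of the resolvent at $-\iota_{0}$ with the projection $\pi_{\iota_{0}}$; and the applicability of the spectral-theoretic result \cite[Corollary~2.2 in Chapter~B-IV]{MR839450} for positive irreducible compact semigroups.

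First I would prove the analogue of Lemma~\ref{lem:compactnessOfSemigroup}: for every $t>0$, $p_{t}:C_{\infty}(0,\ell]\to C_{\infty}(0,\ell]$ is compact. As in Proposition~\ref{prop:strongerRegularity} one checks $\lim_{x\to 0}p_{t}1(x)=0$, using that $x\mapsto W^{(q)}(x,u)$ and $x\mapsto Z^{(q)}(x,\ell)$ are continuous (Proposition~\ref{prop:propertiesOfScaleFunc}(ii) and Remark~\ref{rem:W(x,x)m(x)=0}), so that $\lim_{x\to 0}r_{1}^{(q)}(x,u)=0$ and hence $\lim_{x\to 0}\int_{0}^{\infty}e^{-qt}p_{t}1(x)\,dt=0$; since $x\mapsto p_{t}1(x)$ is non-decreasing by the absence of negative jumps, this gives $\lim_{x\to 0}p_{t}1(x)=0$ for every $t>0$. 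Combined with the strong Feller hypothesis and the absolute continuity of $p_{t}(x,\cdot)$ with respect to the reference measure $m$ (which follows, as in Lemma~\ref{lem:compactnessOfSemigroup}, from $R^{(1)}1_{N}=0$ for $m$-null $N$ together with the strong continuity of $(p_{t})$), this shows that $p_{t}$ maps $L^{\infty}(m)$ into $C_{\infty}(0,\ell]$, and then the Banach--Alaoglu and Dini argument of Lemma~\ref{lem:compactnessOfSemigroup} carries over unchanged.

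Next, the analogue of Proposition~\ref{prop:invariantFunction} for $Z^{(q)}(\cdot,\ell)$---namely that $q\mapsto Z^{(q)}(0,\ell)$ has a simple zero at $q=-\iota_{0}$ with derivative $\rho=\int_{(0,\ell)}W^{(-\iota_{0})}(0,u)Z^{(-\iota_{0})}(u,\ell)\,m(du)\in(0,\infty)$, that $Z^{(-\iota_{0})}(\cdot,\ell)$ is $\iota_{0}$-invariant and strictly positive on $(0,\ell]$, and that it is non-decreasing with $Z^{(-\iota_{0})}(\ell,\ell)=1$ (hence $\le 1$)---gives $\lim_{q\to-\iota_{0}}(q+\iota_{0})r_{1}^{(q)}(x,y)=Z^{(-\iota_{0})}(x,\ell)W^{(-\iota_{0})}(0,y)/\rho$, so by dominated convergence the residue at $q=-\iota_{0}$ of the resolvent $R^{(q)}$, acting on $C_{\infty}(0,\ell]$, is exactly $\pi_{\iota_{0}}$. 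Since $(p_{t})$ is positive, compact by the first step, and irreducible by (Irr1), \cite[Corollary~2.2 in Chapter~B-IV]{MR839450} yields $\iota_{1}>\iota_{0}$ and, for each $\delta\in(\iota_{0},\iota_{1})$, a constant $\tilde M_{\delta}>0$ with $||e^{\iota_{0}t}p_{t}g-\pi_{\iota_{0}}g||_{\infty}\le\tilde M_{\delta}e^{-\delta t}$ for $g\in C_{\infty}(0,\ell]$. Passing to $f\in\cB_{b}(I)$ with $||f||_{\infty}\le 1$ is done exactly as in Theorem~\ref{thm:rateOfCovergence}: fixing $\eps>0$, one has $p_{\eps}f\in C_{\infty}(0,\ell]$ by the strong Feller property and $\pi_{\iota_{0}}(p_{\eps}f)=e^{-\iota_{0}\eps}\pi_{\iota_{0}}f$ since $\mu_{\iota_{0}}$ is quasi-stationary, which gives the bound with $M_{\delta}=\tilde M_{\delta}e^{(\iota_{0}+\delta)\eps}$. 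Finally, for a distribution $\mu$ on $(0,\ell]$ and $||f||_{\infty}\le 1$, writing $\pi_{\iota_{0}}(f)=\mu_{\iota_{0}}(f)\pi_{\iota_{0}}(1)$ one obtains $|\bE_{\mu}[f(X_{t})\mid\tau_{0}>t]-\mu_{\iota_{0}}(f)|\le 2M_{\delta}e^{-\delta t}/\mu(e^{\iota_{0}t}p_{t}1)$, and $e^{\iota_{0}t}p_{t}1(x)\ge e^{\iota_{0}t}p_{t}Z^{(-\iota_{0})}(\cdot,\ell)(x)=Z^{(-\iota_{0})}(x,\ell)>0$ yields the claimed estimate.

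I expect the main obstacle to be pinning down the precise form in which \cite[Corollary~2.2 in Chapter~B-IV]{MR839450} is invoked: one must verify that the compact positive semigroup on $C_{\infty}(0,\ell]$ is irreducible in the sense required there---with (Irr1) as the natural input---and that its peripheral spectrum collapses to the single simple eigenvalue $-\iota_{0}$. A secondary point is to check carefully that $p_{t}$ maps $L^{\infty}(m)$ into $C_{\infty}(0,\ell]$, in particular the vanishing at the non-compact end $0$ when $\ell$ is accessible; this, however, is handled by the resolvent-density computation indicated above.
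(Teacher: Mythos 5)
Your proposal is correct and follows exactly the route the paper intends: the paper omits this proof, stating only that it is ``by exactly the same argument as Theorem~\ref{thm:rateOfCovergence},'' and your adaptation --- re-proving the compactness of $p_{t}$ on $C_{\infty}(0,\ell]$ via the Lemma~\ref{lem:compactnessOfSemigroup} argument, identifying the residue of $r_{1}^{(q)}$ at $-\iota_{0}$ with $\pi_{\iota_{0}}$, invoking \cite[Corollary~2.2 in Chapter~B-IV]{MR839450}, and lower-bounding $\mathrm{e}^{\iota_{0}t}p_{t}1$ by the invariant function $Z^{(-\iota_{0})}(\cdot,\ell)$ --- is precisely that same argument with the correct substitutions. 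You also correctly flag the only points requiring re-verification (vanishing of $p_{t}f$ at the non-compact end $0$, and irreducibility via (Irr1)), so nothing is missing.
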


We next consider case (II).
To deal with this case by the same method in Section \ref{section:exponentialConvergence}, we need the scale function $W^{(q)}(x,y)$ to be defined at $y=\ell$.
For this to hold, we require the existence of an excursion measure starting from the point $\ell$.
Therefore, we assume in the following that there exists a distribution $\bP_{\ell}$, and that $(\Omega, \cF, \cF_{t}, X_{t}, \theta_{t}, \bP_{x})$ is a standard process on $(0,\ell]$ with no negative jumps satisfying (A1)-(A3).
Typically, this assumption holds when we first define the process on $(0,\infty]$ and then impose killing at the exit time of $(0,\ell)$.

In this case, the potential density is given by Theorem \ref{thm:potentialDensity} for $x=0$ and $z=\ell$:
\begin{align}
	\int_{0}^{\infty}\mathrm{e}^{-qt}p_{t}f(x)dt = \int_{(0,\ell)}r_{2}^{(q)}(x,u)f(u)m(du)
\end{align}
for
\begin{align}
	r^{(q)}_{2}(x,u) := \frac{W^{(q)}(x,\ell)}{W^{(q)}(0,\ell)}W^{(q)}(0,u) - W^{(q)}(x,u). \label{Eq37}
\end{align}
Also in this case, the density $q \mapsto r^{(q)}(x,u)$ is meromorphic in $q \in \bC$.
To ensure the Feller property, we henceforth assume the following additional assumption:
\begin{align}
	\lim_{x \to \ell}\bP_{x}[\tau_{0} < \tau_{\ell}] = 0. \label{Eq35}
\end{align}

\begin{Rem}
	Note that condition \eqref{Eq35} is equivalent to $W(\ell,\ell)=0$ from Proposition \ref{prop:exitProblem1}, and from \cite[Remark 2.5]{noba2023analytic} it is also equivalent to the fact that the point $\ell$ is regular for itself; $\bP_{\ell}[\tau_{\ell} = 0]= 1$.
\end{Rem}

We can easily check that $(p_{t})_{t \geq 0}$ has the Feller property under \eqref{Eq35}, that is, $p_{t}$ maps $C_{\infty}(0,\ell)$ to itself.
We denote the infinitesimal generator by $A$.
Similarly, we see that the spectrum $\sigma(A)$ of $A$ coincides with $\cN := \{ q \in \bC \mid W^{(q)}(0,\ell) = 0 \}$ and have the unique existence of a quasi-stationary distribution.

\begin{Thm}[{\citet[Theorem A.1]{noba2023analytic}}]
	Suppose the following irreducibility:
	\begin{description}
		\item[(Irr2)] $\bP_{x}[\tau_{y} < \infty] > 0$ for every $x \in (0,\ell)$ and $y \in [0,\ell]$.
	\end{description}
	Let $\kappa_{0} := \sup \{ \lambda \geq 0 \mid \bE_{x}[\mathrm{e}^{\lambda (\tau_{0} \wedge \tau_{\ell})}] < \infty \ \text{for every $x \in I$} \}$.
	Then we have $\kappa_{0} \in (0,\infty)$ and 
	\begin{align}
		\kappa_{0} = \min \{ \lambda \geq 0 \mid W^{(-q)}(0,\ell) = 0 \}. \label{Eq36}
	\end{align}
	In addition, there exists a unique quasi-stationary distribution
	\begin{align}
		\mu_{\kappa_{0}}(dx) = C_{\kappa_{0}}W^{(-\kappa_{0})}(0,x)m(dx),
	\end{align}
	where $C_{\kappa_{0}} > 0$ is the normalizing constant,
	and we have $\bP_{\mu_{\kappa_{0}}}[\tau_{0} \wedge \tau_{\ell} > t] = \mathrm{e}^{-\kappa_{0}t} \ (t \geq 0)$ and $W^{(-\iota_{0})}(0,x) > 0$ for $m$-almost every $x \in (0,\ell)$.
\end{Thm}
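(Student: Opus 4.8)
The argument follows the template of Theorem~\ref{thm:existenceOfQSD} and of case~(I) above, with the scale function $W^{(q)}(\cdot,\ell)$ replacing $Z^{(q)}(\cdot,\ell)$, since the resolvent density is now \eqref{Eq37} and the relevant zero set is $\cN=\{q\in\bC\mid W^{(q)}(0,\ell)=0\}$. I would proceed in three steps: (a) extract the spectral and Perron--Frobenius input; (b) identify $\kappa_0$ with the smallest positive zero of $\lambda\mapsto W^{(-\lambda)}(0,\ell)$ through an explicit Laplace transform of $\zeta$; (c) write down $\mu_{\kappa_0}$ and prove uniqueness.

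\textbf{Step (a).} By the discussion just preceding the statement, $(p_t)$ is Feller on $C_\infty(0,\ell)$ (this is where \eqref{Eq35} enters), its generator $A$ has resolvent $R^{(q)}$ with kernel $r_2^{(q)}$, $\sigma(A)=\cN$ consists of eigenvalues, and $R^{(q)}$ is compact, exactly as in Section~\ref{section:spectralTheory}; one checks from \eqref{resolventIdentityW} that $AW^{(q)}(\cdot,\ell)=qW^{(q)}(\cdot,\ell)$ for $q\in\cN$, so $W^{(q)}(\cdot,\ell)\in C_\infty(0,\ell)$ is the corresponding eigenfunction. For $q_0>0$ the operator $R^{(q_0)}$ is positive, compact, nonzero and irreducible --- the last from (Irr2), which forces the potential density $r_2^{(q_0)}(x,\cdot)$ to charge every nonempty open subset of $(0,\ell)$ --- so the Krein--Rutman / de~Pagter theorem gives a strictly positive spectral radius $r>0$, an eigenvalue of $R^{(q_0)}$ with strictly positive eigenfunction and eigenfunctional. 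Translating back, $q^{*}:=q_0-1/r$ is real, lies in $\cN$, and is the rightmost point of $\cN$ (using $\{\Re q>0\}\subset\rho(A)$ since $(p_t)$ is sub-Markovian, and $0\notin\cN$ because $W(0,\ell)>0$ by Proposition~\ref{prop:propertiesOfScaleFunc}(i)), with eigenfunction $W^{(q^{*})}(\cdot,\ell)>0$ on $(0,\ell)$. Since $\bE_x[\zeta]=\bar W(0,\ell)\,W(x,\ell)/W(0,\ell)-\bar W(x,\ell)<\infty$ by Proposition~\ref{prop:propertiesOfScaleFunc}(iii), we have $p_t1(x)=\bP_x[\zeta>t]\to0$; but $p_t1\ge\mathrm{e}^{q^{*}t}W^{(q^{*})}(\cdot,\ell)/||W^{(q^{*})}(\cdot,\ell)||_\infty$, forcing $q^{*}<0$. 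Writing $\kappa_0:=-q^{*}\in(0,\infty)$: then $W^{(-\kappa_0)}(0,\ell)=0$, and since $q^{*}$ is the rightmost point of $\cN$ and $0\notin\cN$, $\kappa_0$ is the smallest positive zero of $\lambda\mapsto W^{(-\lambda)}(0,\ell)$; moreover the strictly positive eigenfunctional is represented by the measure with density $W^{(-\kappa_0)}(0,\cdot)$, so $W^{(-\kappa_0)}(0,u)>0$ for $m$-a.e.\ $u\in(0,\ell)$.

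\textbf{Step (b).} Adding the exit identities of Propositions~\ref{prop:exitProblem1} and~\ref{prop:exitProblemZ} with $x=0$, $z=\ell$ (noting $\zeta=\tau_0^-\wedge\tau_\ell^+$, so $\tau_\ell^+\wedge\zeta=\zeta$ and $\{(\tau_\ell^+\wedge\zeta)<\tau_0^-\}=\{\tau_\ell^+<\tau_0^-\}$) yields, for $q>0$ and $y\in(0,\ell)$,
\begin{align}
	\bE_{y}[\mathrm{e}^{-q\zeta}]=Z^{(q)}(y,\ell)+\frac{W^{(q)}(y,\ell)}{W^{(q)}(0,\ell)}\bigl(1-Z^{(q)}(0,\ell)\bigr),
\end{align}
whose right-hand side is meromorphic in $q$ with poles contained in $\cN$. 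The power series $\lambda\mapsto\bE_y[\mathrm{e}^{\lambda\zeta}]=\sum_{n\ge0}\lambda^n\bE_y[\zeta^n]/n!$ has nonnegative coefficients, so by Pringsheim's theorem its radius of convergence is the distance from $0$ to the nearest singularity of this meromorphic extension; by Step (a) that is the pole at $\lambda=\kappa_0$, genuine because $Z^{(-\kappa_0)}(0,\ell)=1-\kappa_0\int_{(0,\ell)}W^{(-\kappa_0)}(0,u)m(du)<1$, and there is no singularity in $(0,\kappa_0)$ because $W^{(-\lambda)}(0,\ell)\neq0$ there. Hence $\sup\{\lambda\ge0\mid\bE_y[\mathrm{e}^{\lambda\zeta}]<\infty\}=\kappa_0$ for every $y$; combined with $\kappa_0\in(0,\infty)$ from Step (a) this is \eqref{Eq36}.

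\textbf{Step (c) and the obstacle.} Put $\mu_{\kappa_0}(du):=C_{\kappa_0}W^{(-\kappa_0)}(0,u)m(du)$, a probability measure after normalisation since the density is $\ge0$ with $0<\int_{(0,\ell)}W^{(-\kappa_0)}(0,u)m(du)<\infty$ (Propositions~\ref{prop:defOfM} and~\ref{prop:propertiesOfScaleFunc}). Using $W^{(-\kappa_0)}(0,\ell)=0$ and \eqref{resolventIdentityW} in the forms $W^{(-\kappa_0)}\otimes W^{(q)}(0,u)=(W^{(q)}(0,u)-W^{(-\kappa_0)}(0,u))/(q+\kappa_0)$ and $W^{(-\kappa_0)}\otimes W^{(q)}(0,\ell)=W^{(q)}(0,\ell)/(q+\kappa_0)$, a one-line computation gives $\int_{(0,\ell)}W^{(-\kappa_0)}(0,x)r_2^{(q)}(x,u)m(dx)=W^{(-\kappa_0)}(0,u)/(q+\kappa_0)$, that is $\mu_{\kappa_0}R^{(q)}=(q+\kappa_0)^{-1}\mu_{\kappa_0}$ for all $q>0$; inverting Laplace transforms (the set of $t$ with $\mu_{\kappa_0}p_t=\mathrm{e}^{-\kappa_0 t}\mu_{\kappa_0}$ is co-null and closed under addition, hence all of $(0,\infty)$) shows $\mu_{\kappa_0}$ is quasi-stationary with $\bP_{\mu_{\kappa_0}}[\zeta>t]=\mathrm{e}^{-\kappa_0 t}$. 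For uniqueness, a quasi-stationary $\nu$ of rate $\kappa>0$ (the rate is positive as $\zeta<\infty$ $\bP_x$-a.s.) satisfies $\nu R^{(q)}=(q+\kappa)^{-1}\nu$, so $-\kappa\in\sigma(A)$ carries a positive eigenfunctional; simplicity of the principal eigenvalue (Krein--Rutman) forces $\kappa=\kappa_0$ and $\nu$ into the one-dimensional principal eigenspace, so $\nu=\mu_{\kappa_0}$. The main obstacle is Step (a): fixing the sign of the scale function at and below $-\kappa_0$ and reconciling the variational, spectral and Perron--Frobenius descriptions of $\kappa_0$. Here one must be careful that only the \emph{resolvent}, not the semigroup, is known to be compact (no strong Feller property is assumed), so, unlike in Section~\ref{section:exponentialConvergence}, ``growth bound equals spectral bound'' is unavailable and the decay rate has to be pinned down analytically, via Pringsheim's theorem in Step (b).
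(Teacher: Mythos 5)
This statement is imported verbatim from \citet[Theorem A.1]{noba2023analytic}; the present paper gives no proof of it, so there is no in-paper argument to compare against. Your Krein--Rutman/Pringsheim route is a legitimate and essentially complete strategy: the eigenfunction computation $R^{(r)}W^{(q)}(\cdot,\ell)=(r-q)^{-1}W^{(q)}(\cdot,\ell)$ for $q\in\cN$ is correct, the exit identity in Step (b) follows from Propositions \ref{prop:exitProblem1} and \ref{prop:exitProblemZ} exactly as you say, the left-eigenmeasure computation in Step (c) checks out via \eqref{resolventIdentityW} and $W^{(-\kappa_{0})}(0,\ell)=0$, and the co-null-semigroup trick and the uniqueness argument via the Perron--Frobenius property of irreducible compact positive operators are standard and sound.

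Two points, however, are genuinely under-justified as written. First, the nonnegativity of $W^{(-\kappa_{0})}(0,\cdot)$: Propositions \ref{prop:defOfM} and \ref{prop:propertiesOfScaleFunc} control $|W^{(q)}|$ but say nothing about the sign of the analytic continuation at negative $q$ (the series $\sum_{n}(-\kappa_{0})^{n}W^{\otimes(n+1)}$ has alternating signs), and Krein--Rutman only tells you that $W^{(-\kappa_{0})}(0,\cdot)$ is a nonzero scalar multiple of the strictly positive eigen-density --- it does not fix which sign the scalar has. You need an extra step, e.g.\ observe that for $\lambda\in(0,\kappa_{0})$ the kernel $r_{2}^{(-\lambda)}(x,u)$ is an $m$-a.e.\ nonnegative potential density, that $r_{2}^{(-\lambda)}(x,u)=\frac{W^{(-\lambda)}(x,\ell)}{W^{(-\lambda)}(0,\ell)}W^{(-\lambda)}(0,u)$ for $u<x$, and that $W^{(-\lambda)}(x,\ell)/W^{(-\lambda)}(0,\ell)=\bE_{x}[\mathrm{e}^{\lambda\tau_{0}},\tau_{0}<\tau_{\ell}]>0$; letting $x\to\ell$ and then $\lambda\uparrow\kappa_{0}$ gives $W^{(-\kappa_{0})}(0,\cdot)\geq 0$ a.e., which combined with the constant-sign/nonvanishing conclusion yields strict positivity a.e. (The same continuation of Proposition \ref{prop:exitProblem1} to $\lambda\in(0,\kappa_{0})$ also settles the sign of $W^{(-\kappa_{0})}(y,\ell)$ and hence the genuineness of the pole in Step (b).) Second, your parenthetical justification that $q^{*}=q_{0}-1/r$ is the \emph{rightmost} point of $\cN$ does not follow from applying Krein--Rutman at a single $q_{0}$: that only shows $q^{*}$ minimizes $|q_{0}-q|$ over $\sigma(A)$, which does not exclude a non-real spectral point with larger real part. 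The gap is closed either by quoting the positive-semigroup theorem that the spectral bound belongs to the spectrum (e.g.\ \cite{MR839450}), or by noting that the real point produced by Krein--Rutman is the same (namely the largest real element of $\cN$) for every $q_{0}>0$ and letting $q_{0}\to\infty$ in the inequality $(\Im q')^{2}\geq(\Re q'-q^{*})(2q_{0}-q^{*}-\Re q')$. With these two repairs the argument goes through.
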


\begin{Rem}
	In \cite{noba2023analytic}, the value $\kappa_{0}$ is defined as
	\[
	\tilde{\kappa}_{0} := \sup \{\lambda \geq 0 \mid \bE_{x}[\mathrm{e}^{\lambda \tau_{0}}, \tau_{0} < \tau_{\ell}] < \infty \text{ for every $x \in I$} \}.
	\]
	We check $\kappa_{0} = \tilde{\kappa}_{0}$.
	It is obvious that $\kappa_{0} \leq \tilde{\kappa}_{0}$.
	It follows for $q \geq 0$ from \eqref{Eq37}
	\begin{align}
		\bE_{x}[\mathrm{e}^{-q(\tau_{0}\wedge \tau_{\ell})}] = 1 - q \bE_{x}[\mathrm{e}^{-q\tau_{0}}, \tau_{0} < \tau_{\ell}]\int_{(0,\ell)}W^{(q)}(0,u)m(du) + \int_{(0,\ell)}W^{(q)}(x,u)m(du).
	\end{align}
	The RHS is meromorphic on $\{ q \in \bC \mid \Re q > -\tilde{\kappa}_{0} \}$, and the $n$-th coefficient of the power series expansion around $q = 0$ can be computed as the $n$-th right-derivative of the LHS, i.e., $(-1)^{n}\bE_{x}[(\tau_{0} \wedge \tau_{\ell})^{n}]/n!$.
	Since the radius of convergence is $\tilde{\kappa}_{0}$, we see that $\bE_{x}[\mathrm{e}^{(\tilde{\kappa}_{0}-\delta)(\tau_{0}\wedge \tau_{\ell})}] < \infty$ for every $\delta > 0$.
	This shows $\kappa_{0} \geq \tilde{\kappa}_{0}$. 
\end{Rem}

The following is the exponential convergence to the Yaglom limit.
The proof can be given by almost the same arguments as Theorem \ref{thm:rateOfCovergence}.

\begin{Thm}
	Suppose (Irr2) and the strong Feller property, that is, $p_{t}f \in C_{\infty}(0,\ell)$ for every $f \in \cB_{b}(I)$ and $t > 0$.
	Then the spectral gap exists: $\kappa_{1} > \kappa_{0}$ for
	\[
	\kappa_{1} := -\sup\{ \Re q \mid q \in \sigma(A) \setminus \{ -\kappa_{0} \} \},
	\]
	and the following holds:
	for every $\delta \in (0,\kappa_{1}-\kappa_{0})$, there exists $M_{\delta} > 0$ and
	\begin{align}
		\sup_{f \in \cB_{b}(I), ||f||_{\infty} \leq 1}||\mathrm{e}^{\kappa_{0}t}p_{t}f - \pi_{\kappa_{0}}f||_{\infty} \leq M_{\delta}\mathrm{e}^{-\delta t} \quad (t > 0),
	\end{align}
	where $\pi_{\kappa_{0}}$ is the projection defined by
	\begin{align}
		\pi_{\kappa_{0}}f(x) := \frac{1}{\rho C_{\kappa_{0}}}W^{(-\kappa_{0})}(x,\ell)\int_{I}f(y)\mu_{\kappa_{0}}(dy) \label{}
	\end{align}
	for $\rho := \frac{d}{dq}W^{(q)}(0,\ell)|_{q = -\kappa_{0}} \in (0,\infty)$.
	This implies the exponential convergence to the Yaglom limit: for every distribution $\mu$ on $(0,\ell)$
	\begin{align}
		\sup_{f \in \cB_{b}(I), ||f||_{\infty} \leq 1}|\bE_{\mu}[f(X_{t}) \mid \tau_{0} \wedge\tau_{\ell} > t] - \mu_{\kappa_{0}}(f)| \leq
		\frac{2M_{\delta}\mathrm{e}^{-\delta t}}{\mu(W^{(-\kappa_{0})}(\cdot,\ell))} \quad (t > 0). 
	\end{align}
\end{Thm}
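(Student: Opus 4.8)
The plan is to transcribe the proof of Theorem~\ref{thm:rateOfCovergence}, replacing the triple $(Z^{(q)}(\cdot),Z^{(q)}(0),\lambda_{0})$ and the resolvent density $r^{(q)}$ throughout by $(W^{(q)}(\cdot,\ell),W^{(q)}(0,\ell),\kappa_{0})$ and $r_{2}^{(q)}$ from \eqref{Eq37}. The Feller property of $(p_{t})_{t\geq0}$ on $C_{\infty}(0,\ell)$, the existence of the generator $A$, the identity $\sigma(A)=\cN=\{q\in\bC\mid W^{(q)}(0,\ell)=0\}$, and the existence of the quasi-stationary distribution $\mu_{\kappa_{0}}$ have already been recorded above, so the remaining work is (a) a Case~(II) analogue of Proposition~\ref{prop:invariantFunction} for the eigenfunction, (b) the compactness of $p_{t}$, and (c) the passage through the spectral theory. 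For (a): applying \eqref{resolventIdentityW} and \eqref{resolventIdentityZ} exactly as in the proof of Theorem~\ref{thm:SpectrumIsZeros2} gives $R^{(r)}W^{(-\kappa_{0})}(\cdot,\ell)=(r+\kappa_{0})^{-1}W^{(-\kappa_{0})}(\cdot,\ell)$ for $r>0$, so $W^{(-\kappa_{0})}(\cdot,\ell)\in\cD(A)$ is an eigenfunction for the eigenvalue $-\kappa_{0}$ and $p_{t}W^{(-\kappa_{0})}(\cdot,\ell)=\mathrm{e}^{-\kappa_{0}t}W^{(-\kappa_{0})}(\cdot,\ell)$; it is continuous (Proposition~\ref{prop:propertiesOfScaleFunc}(ii)) and, being $m$-a.e.\ positive with $m$ of full support, strictly positive on $(0,\ell)$; differentiating \eqref{resolventIdentityW} gives $\rho=\frac{d}{dq}W^{(q)}(0,\ell)\big|_{q=-\kappa_{0}}=\int_{(0,\ell)}W^{(-\kappa_{0})}(0,u)W^{(-\kappa_{0})}(u,\ell)\,m(du)\in(0,\infty)$; and the residue of $r_{2}^{(q)}$ at $q=-\kappa_{0}$ shows that $\pi_{\kappa_{0}}$ is precisely the residue of $(R^{(q)})_{q\in\bC\setminus\cN}$ there, a rank-one projection with $\pi_{\kappa_{0}}f=\mu_{\kappa_{0}}(f)\,\pi_{\kappa_{0}}1$.

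The technical heart is (b), the Case~(II) analogue of Lemma~\ref{lem:compactnessOfSemigroup}: under (SF), $p_{t}\colon C_{\infty}(0,\ell)\to C_{\infty}(0,\ell)$ is compact for every $t>0$. I would first check that $p_{t}$ maps bounded measurable functions into $C_{\infty}(0,\ell)$: continuity on $(0,\ell)$ is (SF); $p_{t}1(x)\to0$ as $x\to0$ follows, as in Proposition~\ref{prop:strongerRegularity}, from the absence of negative jumps together with the representation \eqref{Eq37}; and $p_{t}1(x)\to0$ as $x\uparrow\ell$ follows from assumption \eqref{Eq35}, equivalently the regularity of $\ell$ for itself, since then $\bP_{x}[\tau_{\ell}\leq s]\to1$ for every $s>0$ while $p_{t}1(x)\leq\bP_{x}[\tau_{\ell}>t]$. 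Granting this, the Schilling--Wang argument transfers unchanged: the reference measure forces $p_{t}(x,\cdot)\ll m$ with some density $\rho_{t}(x,\cdot)$; given a bounded sequence in $C_{\infty}(0,\ell)$, Banach--Alaoglu extracts a $\sigma(L^{\infty}(m),L^{1}(m))$-convergent subsequence whose images converge pointwise on $[0,\ell]$ (in particular at the two endpoints, where they are $0$); and the estimate $|p_{t}f_{n_{k}}-p_{t}f_{n_{l}}|\leq p_{t/2}h_{N}$ with $h_{N}\downarrow0$, combined with Dini's theorem applied to the continuous functions $p_{t/2}h_{N}\downarrow0$, upgrades pointwise to uniform convergence.

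For (c), once $p_{t}$ is compact, positive, and (by (Irr2)) irreducible, \cite[Corollary 2.2 in Chapter B-IV]{MR839450} and its proof yield the spectral gap $\kappa_{1}>\kappa_{0}$ and, for each $\delta\in(\kappa_{0},\kappa_{1})$, a constant $\tilde{M}_{\delta}$ with $\sup_{g\in C_{\infty}(0,\ell),\ ||g||_{\infty}\leq1}||\mathrm{e}^{\kappa_{0}t}p_{t}g-\pi_{\kappa_{0}}g||_{\infty}\leq\tilde{M}_{\delta}\mathrm{e}^{-\delta t}$. To reach $\cB_{b}(I)$, I would fix $\eps>0$ and, for $t>\eps$, write $p_{t}f=p_{t-\eps}(p_{\eps}f)$ with $p_{\eps}f\in C_{\infty}(0,\ell)$ by the previous paragraph, using $\pi_{\kappa_{0}}(p_{\eps}f)=\mathrm{e}^{-\kappa_{0}\eps}\pi_{\kappa_{0}}f$ (from $\bP_{\mu_{\kappa_{0}}}[\tau_{0}\wedge\tau_{\ell}>t]=\mathrm{e}^{-\kappa_{0}t}$); this gives the stated estimate on $t>\eps$ with $M_{\delta}=\tilde{M}_{\delta}\mathrm{e}^{(\kappa_{0}+\delta)\eps}$, extended to all $t>0$ by enlarging $M_{\delta}$. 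Finally, for a distribution $\mu$ on $(0,\ell)$ and $||f||_{\infty}\leq1$, I would write $\bE_{\mu}[f(X_{t})\mid\tau_{0}\wedge\tau_{\ell}>t]-\mu_{\kappa_{0}}(f)=\mu(\mathrm{e}^{\kappa_{0}t}p_{t}1)^{-1}\big(\mu(\mathrm{e}^{\kappa_{0}t}p_{t}f-\pi_{\kappa_{0}}f)-\mu_{\kappa_{0}}(f)\,\mu(\mathrm{e}^{\kappa_{0}t}p_{t}1-\pi_{\kappa_{0}}1)\big)$, use $\pi_{\kappa_{0}}f=\mu_{\kappa_{0}}(f)\pi_{\kappa_{0}}1$ and $||f||_{\infty}\leq1$ to bound the numerator by $2M_{\delta}\mathrm{e}^{-\delta t}$, and bound the denominator below by $\mu(W^{(-\kappa_{0})}(\cdot,\ell))$ using the $\kappa_{0}$-invariance of $W^{(-\kappa_{0})}(\cdot,\ell)$, namely $\mathrm{e}^{\kappa_{0}t}p_{t}1\geq\mathrm{e}^{\kappa_{0}t}p_{t}W^{(-\kappa_{0})}(\cdot,\ell)=W^{(-\kappa_{0})}(\cdot,\ell)$ (after the normalization of the eigenfunction that makes $W^{(-\kappa_{0})}(\cdot,\ell)\leq1$); this gives the claimed exponential bound for the Yaglom limit in total variation.

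I expect (b) to be the main obstacle, and within it the behaviour at the accessible boundary: showing that $p_{t}$ maps $\cB_{b}(I)$ into $C_{\infty}(0,\ell)$, i.e.\ that $p_{t}f$ vanishes at the killing point $\ell$, genuinely uses \eqref{Eq35} and has no counterpart in the proof of Theorem~\ref{thm:rateOfCovergence}; once this and the absolute continuity $p_{t}(x,\cdot)\ll m$ are secured, the Banach--Alaoglu/Dini machinery of \cite{SchillingWang} goes through verbatim. The only other point needing care is the Case~(II) version of Proposition~\ref{prop:invariantFunction}---strict positivity, $\kappa_{0}$-invariance, and the normalization of $W^{(-\kappa_{0})}(\cdot,\ell)$---which is what underlies both the identification of $\pi_{\kappa_{0}}$ as the spectral residue and the final lower bound on $\mu(\mathrm{e}^{\kappa_{0}t}p_{t}1)$.
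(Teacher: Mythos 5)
Your proposal is correct and follows exactly the route the paper intends: the paper omits this proof, stating it is ``almost the same as Theorem \ref{thm:rateOfCovergence}'', and your substitution of $(W^{(q)}(\cdot,\ell),\,W^{(q)}(0,\ell),\,\kappa_{0},\,r^{(q)}_{2})$ for $(Z^{(q)},\,Z^{(q)}(0),\,\lambda_{0},\,r^{(q)})$, together with the transferred Schilling--Wang compactness argument, the residue identification of $\pi_{\kappa_{0}}$, and the $\eps$-shift to pass from $C_{\infty}(0,\ell)$ to $\cB_{b}(I)$, is precisely that adaptation. The one step stated too loosely is the strict positivity of the eigenfunction $W^{(-\kappa_{0})}(\cdot,\ell)$ --- $m$-a.e.\ positivity plus full support of $m$ only yields nonnegativity of a continuous function, and the a.e.\ positivity recorded in the appendix concerns $W^{(-\kappa_{0})}(0,\cdot)$ rather than $W^{(-\kappa_{0})}(\cdot,\ell)$ --- but this is repaired at once from the identity $\mathrm{e}^{\kappa_{0}t}p_{t}W^{(-\kappa_{0})}(\cdot,\ell)=W^{(-\kappa_{0})}(\cdot,\ell)$ together with (Irr2), and the normalization making $W^{(-\kappa_{0})}(\cdot,\ell)\leq 1$ (needed for the lower bound $\mu(\mathrm{e}^{\kappa_{0}t}p_{t}1)\geq\mu(W^{(-\kappa_{0})}(\cdot,\ell))$) only changes $M_{\delta}$ by a harmless multiplicative constant.
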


\section{Proof of Proposition \ref{prop:absenceOfNegativeJumpsEquivalence}} \label{appendix:absenceOfNegativeJumpsEquivalence}

To simplify the description, we assume that the state space $I$ is bounded, that is, $\ell_{1}, \ \ell_{2} \in \bR$.
We do not lose any generality since any case can be reduced to this situation via an order-preserving homeomorphism.

Set an optional time $\tau := \inf \{ t > 0 \mid X_{t} - X_{t-} < 0 \}$.
Take $x,y,z \in I$ with $x < y < z$.
Note that \eqref{absenceOfNegativeJumps2} is equivalent to $\bP_{x}[\tau < \infty] = 0 \ (x \in I)$.
If $\bP_{z}[\tau_{x}^{-} < \tau_{y}] > 0$, it obviously follows $\bP_{z}[\tau_{I_{<y}} < \tau_{y}] > 0$.
On $\{\tau_{I_{<y}} < \tau_{y} \}$, by the right continuity of paths, we have $X_{\tau_{I_{<y}}} < y$, which implies $X_{\tau_{I_{<y}}} - X_{\tau_{I_{<y}}-} < 0$.
Thus, we have $\bP_{z}[\tau < \infty] > 0$, and we have shown that \eqref{absenceOfNegativeJumps2} implies \eqref{absenceOfNegativeJumps1}.

Let \eqref{absenceOfNegativeJumps1} hold and suppose $\bP_{z_{0}}[\tau < \infty] > 0$ for some $z_{0} \in I$.
Set $\tau^{(\eps)} := \inf \{ t > 0 \mid X_{t} - X_{t-} < -\eps \}$ for $\eps \geq 0$.
Since $\lim_{\eps \to 0}\tau^{(\eps)} = \tau$, there exists $\delta > 0$ such that $\bP_{z_{0}}[\tau^{(\delta)} \in (0,\infty)] > 0$.
We can take an interval $J \subset I$ that is open in $I$, has length strictly less than $\delta/2$, and satisfies $\bP_{z_{0}}[A] > 0$ for 
\begin{align}
	A := \{X_{\tau^{(\delta)}-} \in J, \ \tau^{(\delta)} < \infty\}.
\end{align}
We denote the left end point of $J$ by $a$.
Specifically, we can take $J$ as either $(a,a+\delta/4)$ or $(a,\ell_{2}]$.
We denote the closure of $J$ in $I$ by $\bar{J}$.
Noting that $\tau_{J} < \tau^{(\delta)}$ on $A$, we see by the strong Markov property at $\tau_{J}$ that there exists $y_{0} \in \bar{J}$ such that $\bP_{y_{0}}[A] > 0$.
From the definition of $A$, it follows for every $y \in I_{\geq a}$
\begin{align}
	\bP_{y}[A \cap \{ \tau^{(\delta)} < \tau_{a-\delta/4}^{-}\}] \leq
	 \bP_{y}[\tau_{a-\delta/2}^{-} < \tau_{a-\delta/4}^{-} ] = 0.
\end{align}
Thus, we have $\bP_{y}[A] = \bP_{y}[A \cap \{ \tau_{a-\delta/4}^{-} < \tau^{(\delta)} \}]$.
We note that from \eqref{absenceOfNegativeJumps1} the function $I_{\geq a} \ni y \mapsto \bP_{y}[A \cap \{ \tau_{a-\delta/4}^{-} < \tau^{(\delta)} \}]$ is non-increasing.
In particular, we have
\begin{align}
	\bP_{a}[A \cap \{ \tau_{a-\delta/4}^{-} < \tau^{(\delta)} \}] \geq \bP_{y_{0}}[A] > 0.
\end{align}
Set an optional time as $\sigma := \tau_{J} \circ \theta_{\tau_{a-\delta/4}^{-}} + \tau_{a-\delta/4}^{-}$.
Noting that $\sigma \wedge \tau_{a}^{-} < \tau^{(\delta)}$ on $A \cap \{ \tau_{a-\delta/4}^{-} < \tau^{(\delta)} \}$ and $X_{\sigma} \in \bar{J}$ on $\{\sigma < \infty \}$, we have
\begin{align}
	\bP_{a}[A \cap \{ \tau_{a-\delta/4}^{-} < \tau^{(\delta)} \}]
	= &\bE_{a}[\bP_{X_{\sigma}}[A], \sigma < \tau^{(\delta)}] \\
	= &\bE_{a}[\bP_{X_{\sigma}}[A \cap \{ \tau_{a-\delta/4}^{-} < \tau^{(\delta)} \} ], \sigma < \tau^{(\delta)}] \\
	= & \bE_{a}[\bP_{X_{\sigma}}[\tau_{a}^{-} < \tau^{(\delta)}], \sigma < \tau^{(\delta)}]\bP_{a}[A \cap \{ \tau_{a-\delta/4}^{-} < \tau^{(\delta)} \} ] \\
	\leq &\bP_{a}[\tilde{\sigma} < \tau^{(\delta)}] \bP_{a}[A \cap \{ \tau_{a-\delta/4}^{-} < \tau^{(\delta)} \}],
\end{align}
where $\tilde{\sigma} := \tau_{a}^{-} \circ \theta_{\sigma} + \sigma$.
Thus, it follows $\bP_{a}[\tilde{\sigma} < \tau^{(\delta)}] = 1$.
Set recursively as $\tilde{\sigma}_{1} := \tilde{\sigma}$ and $\tilde{\sigma}_{n} := \tilde{\sigma} \circ \theta_{\tilde{\sigma}_{n-1}} + \tilde{\sigma}_{n-1} \ (n \geq 2)$.
It is easy to see that $(\tilde{\sigma}_{n+1}-\tilde{\sigma}_{n})_{n \geq 1}$ are i.i.d.\ under $\bP_{a}$ and $\bP_{a}[\tilde{\sigma_{1}} > 0] = 1$, which implies $\lim_{n \to \infty}\tilde{\sigma}_{n} = \infty$ $\bP_{a}$-almost surely.
Since $\bP_{a}[\tilde{\sigma}_{n} < \tau^{(\delta)}] = \bP_{a}[\tilde{\sigma}_{1} < \tau^{(\delta)}]^{n} = 1$,
it follows $\bP_{a}[\tau^{(\delta)} = \infty] = 1$.
This is contradiction, and the proof is complete.


\begin{thebibliography}{23}
\providecommand{\natexlab}[1]{#1}
\providecommand{\url}[1]{\texttt{#1}}
\expandafter\ifx\csname urlstyle\endcsname\relax
  \providecommand{\doi}[1]{doi: #1}\else
  \providecommand{\doi}{doi: \begingroup \urlstyle{rm}\Url}\fi

\bibitem[Arendt et~al.(1986)Arendt, Grabosch, Greiner, Groh, Lotz, Moustakas,
  Nagel, Neubrander, and Schlotterbeck]{MR839450}
W.~Arendt, A.~Grabosch, G.~Greiner, U.~Groh, H.~P. Lotz, U.~Moustakas,
  R.~Nagel, F.~Neubrander, and U.~Schlotterbeck.
\newblock \emph{One-parameter semigroups of positive operators}, volume 1184 of
  \emph{Lecture Notes in Mathematics}.
\newblock Springer-Verlag, Berlin, 1986.
\newblock ISBN 3-540-16454-5.
\newblock \doi{10.1007/BFb0074922}.
\newblock URL \url{https://doi.org/10.1007/BFb0074922}.

\bibitem[Bertoin(1996)]{BertoinLevy}
J.~Bertoin.
\newblock \emph{L\'{e}vy processes}, volume 121 of \emph{Cambridge Tracts in
  Mathematics}.
\newblock Cambridge University Press, Cambridge, 1996.
\newblock ISBN 0-521-56243-0.

\bibitem[Blumenthal and Getoor(1968)]{BluGet1968}
R.~M. Blumenthal and R.~K. Getoor.
\newblock \emph{Markov processes and potential theory}.
\newblock Pure and Applied Mathematics, Vol. 29. Academic Press, New
  York-London, 1968.

\bibitem[Champagnat and Villemonais(2016)]{ChampagnatVillemonaisPTRF}
N.~Champagnat and D.~Villemonais.
\newblock Exponential convergence to quasi-stationary distribution and
  {$Q$}-process.
\newblock \emph{Probab. Theory Related Fields}, 164\penalty0 (1-2):\penalty0
  243--283, 2016.
\newblock ISSN 0178-8051,1432-2064.
\newblock \doi{10.1007/s00440-014-0611-7}.
\newblock URL \url{https://doi.org/10.1007/s00440-014-0611-7}.

\bibitem[Champagnat and Villemonais(2023)]{ChampagnatVillemonais}
N.~Champagnat and D.~Villemonais.
\newblock General criteria for the study of quasi-stationarity.
\newblock \emph{Electron. J. Probab.}, 28:\penalty0 Paper No. 22, 84, 2023.
\newblock ISSN 1083-6489.
\newblock \doi{10.1214/22-ejp880}.
\newblock URL \url{https://doi.org/10.1214/22-ejp880}.

\bibitem[Courant and Hilbert(1953)]{MR65391}
R.~Courant and D.~Hilbert.
\newblock \emph{Methods of mathematical physics. {V}ol. {I}}.
\newblock Interscience Publishers, Inc., New York, 1953.

\bibitem[Feller(1952)]{FellerTheParabolicDifferentialEquations}
W.~Feller.
\newblock The parabolic differential equations and the associated semi-groups
  of transformations.
\newblock \emph{Ann. of Math. (2)}, 55:\penalty0 468--519, 1952.
\newblock ISSN 0003-486X.
\newblock \doi{10.2307/1969644}.
\newblock URL \url{https://doi.org/10.2307/1969644}.

\bibitem[Foucart et~al.(2020)Foucart, Li, and Zhou]{entranceBdry}
C.~Foucart, P.~Li, and X.~Zhou.
\newblock On the entrance at infinity of {F}eller processes with no negative
  jumps.
\newblock \emph{Statist. Probab. Lett.}, 165:\penalty0 108859, 9, 2020.
\newblock ISSN 0167-7152.
\newblock \doi{10.1016/j.spl.2020.108859}.
\newblock URL \url{https://doi.org/10.1016/j.spl.2020.108859}.

\bibitem[Geman and Horowitz(1980)]{GemanHorowitz}
D.~Geman and J.~Horowitz.
\newblock Occupation densities.
\newblock \emph{Ann. Probab.}, 8\penalty0 (1):\penalty0 1--67, 1980.
\newblock ISSN 0091-1798.
\newblock URL \url{https://doi.org/10.1214/aop/1176994824}.

\bibitem[It\^{o}(2015)]{Itoexcursion}
K.~It\^{o}.
\newblock \emph{Poisson point processes and their application to {M}arkov
  processes}.
\newblock SpringerBriefs in Probability and Mathematical Statistics. Springer,
  Singapore, 2015.
\newblock ISBN 978-981-10-0271-7; 978-981-10-0272-4.
\newblock \doi{10.1007/978-981-10-0272-4}.
\newblock URL \url{https://doi.org/10.1007/978-981-10-0272-4}.
\newblock With a foreword by Shinzo Watanabe and Ichiro Shigekawa.

\bibitem[It\^{o} and {McKean Jr}.(1974)]{Ito-McKean}
K.~It\^{o} and H.~P. {McKean Jr}.
\newblock \emph{Diffusion processes and their sample paths}.
\newblock Die Grundlehren der mathematischen Wissenschaften, Band 125.
  Springer-Verlag, Berlin-New York, 1974.
\newblock Second printing, corrected.

\bibitem[Kallenberg(2021)]{Kallenberg-third}
O.~Kallenberg.
\newblock \emph{Foundations of modern probability}, volume~99 of
  \emph{Probability Theory and Stochastic Modelling}.
\newblock Springer, Cham, Cham, 2021.
\newblock ISBN 978-3-030-61871-1; 978-3-030-61870-4.
\newblock \doi{10.1007/978-3-030-61871-1}.
\newblock URL \url{https://doi.org/10.1007/978-3-030-61871-1}.
\newblock Third edition [of 1464694].

\bibitem[Kolb and Savov(2014)]{KolbSavov}
M.~Kolb and M.~Savov.
\newblock Exponential ergodicity of killed {L}\'evy processes in a finite
  interval.
\newblock \emph{Electron. Commun. Probab.}, 19:\penalty0 no. 31, 9, 2014.
\newblock ISSN 1083-589X.
\newblock \doi{10.1214/ECP.v19-3006}.
\newblock URL \url{https://doi.org/10.1214/ECP.v19-3006}.

\bibitem[Lax(2002)]{MR1892228}
P.~D. Lax.
\newblock \emph{Functional analysis}.
\newblock Pure and Applied Mathematics (New York). Wiley-Interscience [John
  Wiley \& Sons], New York, 2002.
\newblock ISBN 0-471-55604-1.

\bibitem[McKean(2011)]{MR2772421}
H.~P. McKean.
\newblock Fredholm determinants.
\newblock \emph{Cent. Eur. J. Math.}, 9\penalty0 (2):\penalty0 205--243, 2011.
\newblock ISSN 1895-1074,1644-3616.
\newblock \doi{10.2478/s11533-011-0003-5}.
\newblock URL \url{https://doi.org/10.2478/s11533-011-0003-5}.

\bibitem[Noba(2020)]{NobaGeneralizedScaleFunc}
K.~Noba.
\newblock Generalized scale functions of standard processes with no positive
  jumps.
\newblock \emph{Electron. Commun. Probab.}, 25:\penalty0 Paper No. 8, 12, 2020.
\newblock \doi{10.1214/20-ecp289}.
\newblock URL \url{https://doi.org/10.1214/20-ecp289}.

\bibitem[Noba and Yamato(2023)]{noba2023analytic}
K.~Noba and K.~Yamato.
\newblock Analytic property of generalized scale functions for standard
  processes with no negative jumps and its application to quasi-stationary
  distributions, 2023.
\newblock arXiv:2308.09935.

\bibitem[Pazy(1983)]{MR710486}
A.~Pazy.
\newblock \emph{Semigroups of linear operators and applications to partial
  differential equations}, volume~44 of \emph{Applied Mathematical Sciences}.
\newblock Springer-Verlag, New York, 1983.
\newblock ISBN 0-387-90845-5.
\newblock \doi{10.1007/978-1-4612-5561-1}.
\newblock URL \url{https://doi.org/10.1007/978-1-4612-5561-1}.

\bibitem[Rogers and Williams(2000{\natexlab{a}})]{RogersWilliams1}
L.~C.~G. Rogers and D.~Williams.
\newblock \emph{Diffusions, {M}arkov processes, and martingales. {V}ol. 1}.
\newblock Cambridge Mathematical Library. Cambridge University Press,
  Cambridge, 2000{\natexlab{a}}.
\newblock ISBN 0-521-77594-9.
\newblock \doi{10.1017/CBO9781107590120}.
\newblock URL \url{https://doi.org/10.1017/CBO9781107590120}.
\newblock Foundations, Reprint of the second (1994) edition.

\bibitem[Rogers and Williams(2000{\natexlab{b}})]{RogersWilliams2}
L.~C.~G. Rogers and D.~Williams.
\newblock \emph{Diffusions, {M}arkov processes, and martingales. {V}ol. 2}.
\newblock Cambridge Mathematical Library. Cambridge University Press,
  Cambridge, 2000{\natexlab{b}}.
\newblock ISBN 0-521-77593-0.
\newblock \doi{10.1017/CBO9781107590120}.
\newblock URL \url{https://doi.org/10.1017/CBO9781107590120}.
\newblock It\^{o} calculus, Reprint of the second (1994) edition.

\bibitem[Schilling and Wang(2012)]{SchillingWang}
R.~L. Schilling and J.~Wang.
\newblock Strong {F}eller continuity of {F}eller processes and semigroups.
\newblock \emph{Infin. Dimens. Anal. Quantum Probab. Relat. Top.}, 15\penalty0
  (2):\penalty0 1250010, 28, 2012.
\newblock ISSN 0219-0257,1793-6306.
\newblock \doi{10.1142/S0219025712500105}.
\newblock URL \url{https://doi.org/10.1142/S0219025712500105}.

\bibitem[Tuominen and Tweedie(1979)]{TuominenTweedie}
P.~Tuominen and R.~L. Tweedie.
\newblock Exponential decay and ergodicity of general {M}arkov processes and
  their discrete skeletons.
\newblock \emph{Adv. in Appl. Probab.}, 11\penalty0 (4):\penalty0 784--803,
  1979.
\newblock ISSN 0001-8678.
\newblock \doi{10.2307/1426859}.
\newblock URL \url{https://doi.org/10.2307/1426859}.

\bibitem[Yamato(2025)]{QSD_downward_skip-free}
K.~Yamato.
\newblock Existence of quasi-stationary distributions for downward skip-free
  {M}arkov chains.
\newblock \emph{Stochastic Process. Appl.}, 183:\penalty0 Paper No. 104579, 18,
  2025.
\newblock ISSN 0304-4149,1879-209X.
\newblock \doi{10.1016/j.spa.2025.104579}.
\newblock URL \url{https://doi.org/10.1016/j.spa.2025.104579}.

\end{thebibliography}
\bibliographystyle{plain}

\end{document}